\newcommand{\addresseshere}{%
  \enddoc@text\let\enddoc@text\relax
}
\DeclareMathOperator*{\argmax}{arg\,max}
\DeclareMathOperator*{\argmin}{arg\,min}
\newtheorem{theorem}{Theorem}[section]
\newtheorem{lemma}[theorem]{Lemma}
\newtheorem{corollary}[theorem]{Corollary}
\newtheorem{mainthm}{Theorem}
\theoremstyle{definition}
\newtheorem{remark}[theorem]{Remark}
\newtheorem{example}[theorem]{Example}
\newtheorem{definition}[theorem]{Definition}
\newtheorem{openproblem}{Open Problem}
\newtheorem{claim}{Claim}[theorem]
\newtheorem{case}{Case}[theorem]
\renewcommand*{\b}{\mathbf{b}}
\newcommand*{\y}{\mathbf{y}}
\newcommand*{\x}{\mathbf{x}}
\newcommand*{\z}{\mathbf{z}}
\newcommand*{\PF}{\mathsf{PF}}
\newcommand*{\PFND}{\mathsf{PF}^{\uparrow}}
\newcommand*{\N}{\mathbb{N}}
\newcommand*{\PA}{\mathsf{PA}}
\newcommand*{\PAINV}{\mathsf{PA}^{\mathrm{inv}}}
\newcommand*{\PAINVND}{\mathsf{PA}^{\mathrm{inv},\uparrow}}
\newcommand*{\W}{\mathcal{W}}
\newcommand*{\floor}[1]{\left\lfloor #1 \right\rfloor}
\title{
On the Structure of Permutation Invariant Parking
}
\author[Chen]{Douglas M. Chen}
\address[D.~M.~Chen]{Department of Mathematics, Johns Hopkins University, Baltimore, MD 21218}
\email{\textcolor{blue}{\href{mailto:dchen101@jhu.edu}{dchen101@jhu.edu}}}
\begin{document}
\begin{abstract}
We continue the study of parking assortments, a generalization of parking functions introduced by Chen, Harris, Mart\'{i}nez, Pab\'{o}n-Cancel, and Sargent. 
Given $n \in \N$ cars of lengths $\y=(y_1,y_2,\dots,y_n) \in \N^n$, our focuses are the sets $\PAINV_n(\y)$ and $\PAINVND_n(\y)$ of permutation invariant (resp. nondecreasing) parking assortments for $\y$.
For $\x=(x_1,x_2,\dots,x_n) \in \PAINV_n(\y)$, we introduce the \emph{degree} of $\x$, the number of non-$1$ entries of $\x$, and the \emph{characteristic} $\chi(\y)$ of $\y$, the greatest degree across all $\z \in \PAINV_n(\y)$.
We establish direct necessary conditions for $\y$ with $\chi(\y)=0$ and a simple characterization for $\y$ with $\chi(\y)=n-1$.
In the process, we derive a closed form for $\PAINV_n(\y)$ and an enumeration of $|\PAINV_n(\y)|$ using properties of the Pitman-Stanley polytope, where $\chi(\y)=n-1$.
Next, for any $\y \in \N^n$, we prove that $\PAINV_n(\y)$ is closed under the replacement of any of its elements' entries by a $1$, and given $\y^+ \in \N^{n+1}$, where $\y$ is the prefix of $\y^+$, there is an embedding of $\PAINVND_n(\y)$ into $\PAINVND_{n+1}(\y^+)$.
We apply these results to study the degree as a function and the characteristic under sequences of successive prefix length vectors.
We then examine the \emph{invariant solution set} $\W(\y)\coloneqq \{ w \in \N:(1^{n-1},w) \in \PAINV_n(\y) \}$.
We obtain tight upper bounds of $\W(\y)$ and prove that for any $n \in \N$, we have $|\W(\y)|\leq 2^{n-1}$, providing constraints on the subsequence sums of $\y$ for equality to hold. 
Finally, we show that if $\x \in \PAINVND_n(\y)$, then $\x \in \{ 1 \}^{n-\chi(\y)} \times \W(\y)^{\chi(\y)}$, which implies a new upper bound on $|\PAINVND_n(\y)|$.
Our results generalize several theorems by Chen, Harris, Mart\'{i}nez, Pab\'{o}n-Cancel, and Sargent.
\end{abstract}
\maketitle

\section{Introduction}
\label{section: introduction}
Parking assortments were introduced recently in ~\cite{icermpaper2023inv} as a generalization of parking functions, classic combinatorial objects that arose in the 1960s from the study of hash functions and linear probing ~\cite{konheim1966occupancy}.
There are two well-known equivalent definitions of parking functions.
The first can be described via a ``parking experiment."
\begin{definition}
\label{defn: pfexperiment}
    Consider a one-way street with $n \in \N\coloneqq \{ 1,2,3,\dots \}$ parking spots. 
    There are $n \in \N$ cars of unit length waiting to enter the street.
    For each $i\in [n]\coloneqq \{ 1,2,\dots,n \}$, car $i$ prefers a spot $x_i \in [n]$ of the parking lot, so it drives up to $x_i$ and parks if $x_i$ is unoccupied; otherwise, it parks in the next available spot (if it exists). 
    We say that $\x\coloneqq (x_1,x_2,\dots,x_n) \in [n]^n$ is a \emph{parking function of length $n$} if every car can park following their respective preference in $\x$.
\end{definition}
The second is via a property of $(x_{(1)},x_{(2)},\dots,x_{(n)})$, which is the nondecreasing rearrangement of the entries of $\x$.
\begin{definition}
\label{defn: pforderstat}
    We say that $\x=(x_1,x_2,\dots,x_n) \in [n]^n$ is a \emph{parking function of length $n$} if $x_{(i)}\leq i$ for all $i \in [n]$.
\end{definition}
Perhaps the most celebrated result concerning parking functions is the fact that
\[ |\PF_n|=(n+1)^{n-1}, \]
where $\PF_n$ denotes the set of parking functions of length $n$ (cf. Lemma 1 in \cite{konheim1966occupancy}).
Such a count allows one to see that parking functions are in bijection with several notable combinatorial objects including labeled trees and the Shi hyperplane arrangement \cite{yan2015parking}. 
Explicitly constructing and investigating these bijections reveals many illuminating combinatorial properties of parking functions.
As an example, the sequence 
\[ (c_1,c_2,\dots,c_{n-1}), \quad \text{where} \quad c_i\coloneqq (x_{i+1}-x_i) \pmod{n+1} \quad \forall i \in [n-1], \]
of successive differences modulo $n+1$ of an $\x \in \PF_n$ yields the Prüfer code of a unique labeled tree on $n+1$ vertices \cite{yan2015parking}.
Moreover, let $\mathbf{u}=(u_1,u_2,\dots,u_n) \in \N^n$ be nondecreasing, and consider the \emph{Pitman-Stanley polytope} 
\[ \Pi_n(\mathbf{u})\coloneqq \left\{ (p_1,p_2,\dots,p_n) \in \mathbb{R}_{\geq 0}^n:\sum_{i=1}^{j}p_i\leq \sum_{i=1}^{j}u_i \quad \forall j\in [n] \right\}. \] 
With $\mathbf{u}=(1,2,\dots,n)$, we have $n!V_n(\mathbf{u})=|\PF_n|$, where $V_n(\mathbf{u})$ denotes the $n$-dimensional volume of $\Pi_n(\mathbf{u})$.
More generally, $n!V_n(\mathbf{u})$ is the number of $\x=(x_1,x_2,\dots,x_n) \in \N^n$ such that $x_{(i)}\leq u_i$ for all $i \in [n]$ (cf. Theorem 11 in ~\cite{pitmanstanley2002polytope}); such $\x$ generalize Definition \ref{defn: pforderstat} and are aptly known as \emph{$\mathbf{u}$-parking functions of length $n$}.
For a detailed treatment on the combinatorial theory of parking functions, see Yan \cite{yan2015parking}.

Given these connections and results, parking functions have been an active research topic, appearing in a diverse array of contexts such as pattern avoidance in permutations ~\cite{harris2023outcome}, convex geometry ~\cite{amanbayeva2022convex}, polyhedral combinatorics ~\cite{benedetti2019combinatorial}, partially ordered sets ~\cite{elder2023boolean}, impartial games ~\cite{ji2021brussels}, and of course, the combinatorics of its variations and generalizations ~\cite{colmenarejo2021counting, franks2023counting}.

Now, we define the combinatorial object of interest in this work: parking assortments.
This will involve a natural extension to the parking experiment described in Definition \ref{defn: pfexperiment}.
\begin{definition}
    Consider a one-way street with $m \in \N$ parking spots. 
    There are $n \in \N$ cars waiting to enter the street, and they have lengths $\y\coloneqq (y_1,y_2,\dots,y_n) \in \N^n$, where $m=\sum_{i=1}^{n}y_i$. 
    For each $i\in [n]$, car $i$ (with length $y_i$) prefers a spot $x_i \in [m]$ of the parking lot, so it drives up to $x_i$ and parks if spots $x_i,x_i+1,\dots,x_i+y_i-1$ are unoccupied; otherwise, it parks in the next $y_i$ contiguously available spots (if they exist). 
    We say that $\x\coloneqq (x_1,x_2,\dots,x_n) \in [m]^n$ is a \emph{parking assortment} for $\y$ if every car can park following their respective preference in $\x$.
\end{definition}
\begin{example}
    See Figure $\ref{figure: parkingexample}$, where $\y=(3,4,2)$ and $\x=(5,1,6)$.
    The parking experiment proceeds as follows.
    First, car $1$, with length $3$ and preference $5$, occupies spots $5$, $6$, and $7$.
    Then, car $2$, with length $4$ and preference $1$, occupies spots $1$, $2$, $3$, and $4$ because they are all still available.
    Lastly, car $3$, with length $2$ and preference $6$, occupies spots $8$ and $9$ because spots $6$ and $7$ have been occupied (namely by car $1$), and spots $8$ and $9$ are the next $2$ contiguously available spots. 
    \begin{figure}
        \centering
        \includegraphics[width=7.425cm]{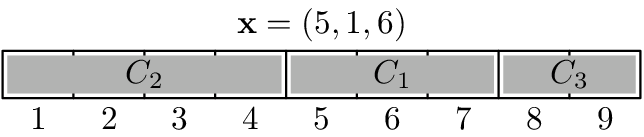}
        \caption{If $\y=(3,4,2)$, then $\x \in \PA_3(\y)$.}
        \label{figure: parkingexample}
    \end{figure}
\end{example}
Note that if $\y=(1^n)\coloneqq (1,1,\dots,1) \in \N^n$, then the set $\PA_n(\y)$ of parking assortments for $\y$ is precisely $\PF_n$. 
From Definition \ref{defn: pforderstat}, we see that this set has the peculiar property that for any $\x=(x_1,x_2,\dots,x_n) \in \PA_n(\y)$, any permutation of its entries $\x'=(x_{\sigma(1)},x_{\sigma(2)},\dots,x_{\sigma(n)})$, where $\sigma \in \mathfrak{S}_n$, is also in $\PA_n(\y)$. 
This permutation invariance of parking functions is a crucial property used to study $\PF_n$ (cf. \S 1.1 in ~\cite{yan2015parking}).
However, this is not true for general $\y$; if $\y=(1,2,2)$, then $\x=(1,1,2) \notin \PAINV_3(\y)$ since $(2,1,1) \notin \PA_3(\y)$, as detailed in ~\cite{icermpaper2023inv}. 
This motivates the problem of determining what $\x \in \PA_n(\y)$ have this property and hence the definition below. 
\begin{definition}
    Let $\y\in \N^n$. 
    We say $\x \in \PA_n(\y)$ is a \emph{(permutation) invariant parking assortment for} $\y$ if any permutation of its entries is also in $\PA_n(\y)$. 
\end{definition}
In this work, we are interested in the set $\PAINV_n(\y)$ of invariant parking assortments for $\y$.
Alternatively, we may study the set $\PAINVND_n(\y)$ of nondecreasing invariant parking assortments for $\y$, as it is equivalent to $\PAINV_n(\y)$ up to permutation.
\begin{example}
    Let $\y=(7,4,6) \in \N^3$.
    Then $\PAINVND_3(\y)=\{ (1^3),(1^2,5) \}$.
    Indeed, $(1^3) \in \PA_3(\y)$, and $(1^2,5),(1,5,1),(5,1^2) \in \PA_3(\y)$; in other words, all permutations of the entries of $(1^3)$ and $(1^2,5)$ are also parking assortments, so $\{ (1^3),(1^2,5) \} \subseteq \PAINVND_3(\y)$.
    One can also check that no other elements of $[7+4+6]^3=[17]^3$ have this property, which yields the reverse inclusion.
\end{example}

Determining $\PAINV_n(\y)$ is generally an arduous task due to the problem of checking each of the elements of $[m]^n$ for invariance.
The main motivator behind the notions introduced in this work is to attempt to reduce this search space considerably.

A basic approach to narrowing the possibilities is to ``decrease the exponent" by noticing that by definition, parking assortments have at least one entry that is equal to $1$; otherwise, parking spot $1$ is never occupied by any car.
This reduces the search space to a size of $m^{n-1}$.
We can take this idea a step further and specifically study the number of entries of invariant parking assortments that are not equal to $1$, a fundamental property we will refer to as their \emph{degree}.
Heuristically, one can expect that most of the entries of an $\x \in \PAINV_n(\y)$ are $1$, and so many of the results presented here will concern and analyze the maximum degree across all $\z \in \PAINV_n(\y)$, which we call the \emph{characteristic} of $\y$.
If there is a straightforward way to bound the characteristic, then this can reduce our search space by more factors of $m$.
\begin{definition}
\label{defn: degandchar}
    Let $\y \in \N^n$. 
    For any $\x=(x_1,x_2,\dots,x_n) \in \PAINV_n(\y)$, the \emph{degree} of $\x$ is given by
    \[ \deg \x\coloneqq |\{ i \in [n]:x_i\neq 1 \}|. \]
    Moreover, the \emph{characteristic} of $\y$ is given by
    \[ \chi(\y)\coloneqq \max_{\z \in \PAINV_n(\y)} \deg \z. \]
    As a special case, note that $\chi(\y)=0$ is equivalent to $\y$ being \emph{minimally invariant}, which is when $\PAINV_n(\y)=\{ (1^n) \}$ \cite{icermpaper2023inv}. 
    Furthermore, we always have $\chi(\y)\in [n-1]_0\coloneqq [n-1] \cup \{ 0 \}$.
\end{definition}

In Section \ref{section: almostconstant}, we study $\y$ of minimal and maximal characteristic. 
We first establish a direct necessary condition for $\y$ to have minimal characteristic.
\begin{mainthm}
\label{mainthm: neccminchar}
    Let $\y=(y_1,y_2,\dots,y_n) \in \N^n$. 
    If $\y$ is minimally invariant, then 
    \[ y_1<\min(y_2,y_3,\dots,y_n) \quad \text{and} \quad y_2\neq \sum_{j \in [n] \setminus \{ 2 \}}y_j. \]
\end{mainthm}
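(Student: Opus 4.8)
The plan is to prove the contrapositive of each implication separately: I will show that if $y_1 \ge \min(y_2,\dots,y_n)$ then $\chi(\y) \ge 1$, and that if $y_2 = \sum_{j \in [n]\setminus\{2\}} y_j$ then $\chi(\y) \ge 1$; in either case $\y$ fails to be minimally invariant. Throughout I write $S_k \coloneqq \sum_{i=1}^{k} y_i$ for the total length of the first $k$ cars, and I use the reduction that to certify $\chi(\y) \ge 1$ it suffices to exhibit a single $w > 1$ with $(1^{n-1},w) \in \PAINV_n(\y)$, since such a vector has degree $1$. Because all but one of its coordinates equal $1$, checking invariance amounts to checking, for every position $k \in [n]$ at which the entry $w$ is placed (with a $1$ in every other coordinate), that the resulting preference vector is a parking assortment.

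For the first condition I set $y_{\min} \coloneqq \min(y_2,\dots,y_n)$, assume $y_{\min} \le y_1$, and claim $(1^{n-1},y_{\min}+1) \in \PAINV_n(\y)$. First I would dispose of the case where the entry $w = y_{\min}+1$ is assigned to a car $k \ge 2$: since car $1$ is processed earlier, the spots $[1,S_{k-1}]$ are already full and $S_{k-1} \ge y_1 \ge y_{\min}$, so $w \le S_{k-1}+1$ and car $k$ merely appends to the occupied block, whence all cars fill $[1,m]$ contiguously. The only delicate case is $k=1$: car $1$ parks in $[y_{\min}+1,\, y_{\min}+y_1]$ (which fits, as $m \ge y_1 + y_{\min}$), opening a gap of size exactly $y_{\min}$ at $[1,y_{\min}]$ together with a tail. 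The crux here is that every remaining car has length at least $y_{\min}$, so any car strictly longer than $y_{\min}$ cannot enter the gap and is pushed to the tail, while the first remaining car of length exactly $y_{\min}$ fills the gap perfectly.

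For the second condition I assume $y_2 = \sum_{j \in [n]\setminus\{2\}} y_j$, so that $m = 2y_2$ and $y_2 > y_i$ for every $i \ne 2$, and I claim $(1^{n-1},y_2+1) \in \PAINV_n(\y)$. Again I would split on the position $k$ of the entry $w = y_2+1$. If $k \ge 3$, car $2$ precedes car $k$, so $S_{k-1} \ge y_2$, giving $w \le S_{k-1}+1$ and a clean append. If $k=2$, car $2$ parks in the right half $[y_2+1,m]$, and the gap $[y_1+1,y_2]$, of size $\sum_{j \ge 3} y_j$, is filled exactly by cars $3,\dots,n$. If $k=1$, car $1$ parks in $[y_2+1,\, y_2+y_1]$ and the resulting left gap $[1,y_2]$, of size exactly $y_2$, is filled in one shot by car $2$, the unique car long enough to occupy it, with the shorter cars completing the tail.

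The main obstacle, in both constructions, is the verification that the greedy ``next available contiguous block'' rule actually fills the gap created by the designated car without stranding an unfillable hole; this is precisely what forces the choices $w = y_{\min}+1$ and $w = y_2+1$, engineering the gap size to match either the minimum car length (so that exactly the shortest car, and no longer car, can occupy it) or the length of the unique longest car (so that it is filled in a single step, or that car is already parked). I expect the routine-but-careful part to be the bookkeeping that the cars diverted to the tail always find room: the lengths of the cars not used to fill the gap sum exactly to the tail length and are placed left to right, so by a suffix-sum bound the still-unparked cars never exceed the remaining space.
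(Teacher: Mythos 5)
Your proposal is correct and takes essentially the same route as the paper's proof: argue the contrapositive by exhibiting the degree-one vectors $(1^{n-1},1+\min(y_2,\dots,y_n))$ and $(1^{n-1},1+y_2)$ as invariant parking assortments, splitting on the position of the non-unit preference, with positions $k\geq 2$ handled by the prefix-sum bound $w\leq 1+\sum_{j=1}^{k-1}y_j$ and the position-$1$ cases handled by your gap arguments (the size-$y_{\min}$ gap filled by the first car of minimal length, and the size-$y_2$ gap filled by car $2$ using $m=2y_2$), exactly as in the paper. One cosmetic slip: your side remark that $y_2>y_i$ for all $i\neq 2$ fails when $n=2$ (there $y_2=y_1$), but your case analysis only ever needs $y_1\leq y_2$, so nothing breaks.
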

Moreover, we work on decreasing the exponent by considering $\PAINV_n(\y)$ for $\y=(b,a^{n-1})$ and $n\geq 2$.
In particular, we obtain a simple necessary and sufficient condition for $\y \in \N^n$ to have maximal characteristic, which provides an easy way to reduce the search space by at least one more factor of $m$, as we need only check for invariant parking assortments of degree $n-1$ when this condition is satisfied.
\begin{mainthm}
\label{mainthm: maxchar}
    Let $\y=(y_1,y_2,\dots,y_n)\in \N^n$.
    Then $\chi(\y)=n-1$ if and only if
    \begin{equation} 
    \label{eqn: equivmaxchar}
        y_1\geq y_2 \quad \text{and} \quad y_2=y_3=\cdots=y_n.
    \end{equation}
\end{mainthm}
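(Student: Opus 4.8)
The plan is to prove both implications by working directly with the sequential parking process, using two structural facts: first, that an invariant parking assortment of degree $n-1$ has a preference multiset of the form $\{1,p_2,\dots,p_n\}$ with every $p_j\geq 2$ (exactly one entry equals $1$); and second, that invariance depends only on this multiset, so we may analyze whichever permutation is most convenient. Throughout I write $\y=(y_1,\dots,y_n)$ and recall $m=\sum_i y_i$.

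For the sufficiency direction I would set $\y=(b,a^{n-1})$ with $b=y_1\geq a=y_2=\cdots=y_n$ and exhibit the explicit candidate $\x=(1,(a+1)^{n-1})$, which has degree $n-1$. Since $\x$ has a unique entry equal to $1$, each permutation is determined by the position $c$ of that $1$. If $c=1$, car $1$ (length $b$) parks on $[1,b]$ and the $n-1$ cars of length $a$ (all preferring $a+1$) pack contiguously onto $[b+1,m]$. If $c\geq 2$, then car $1$ is processed first with preference $a+1$ and occupies $[a+1,a+b]$, the unique car preferring $1$ fills the front block $[1,a]$, and the remaining $n-2$ cars of length $a$ pack contiguously onto $[a+b+1,m]$; a short count using $m=b+(n-1)a$ shows these three pieces tile $[1,m]$ regardless of the processing order among cars $2,\dots,n$. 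The essential point, and the reason the large length must sit in position $1$, is that car $1$ is always processed first, so giving it the preference $a+1$ safely reserves the front block $[1,a]$ for the preference-$1$ car; this would fail if the length-$b$ car were processed after the small cars. Combined with the universal bound $\chi(\y)\leq n-1$, this gives $\chi(\y)=n-1$.

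For the necessity direction, suppose $\chi(\y)=n-1$ and fix an invariant $\x\in\PAINV_n(\y)$ of degree $n-1$ with preference multiset $\{1,p_2,\dots,p_n\}$, each $p_j\geq 2$; set $p\coloneqq\min_j p_j$. The key device is a front-gap argument. For each fixed $j^*\in\{2,\dots,n\}$, consider the permutation assigning preference $p$ to car $1$, preference $1$ to car $j^*$, and the remaining (all $\geq p$) preferences to the other cars. Car $1$ parks first on $[p,p+y_1-1]$, opening a front gap $[1,p-1]$; since every car other than car $j^*$ has preference $\geq p$ and therefore never scans into positions $<p$, only car $j^*$ can occupy this gap. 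As the final configuration must tile $[1,m]$, car $j^*$ must cover $[1,p-1]$ exactly, forcing $y_{j^*}=p-1$. Ranging over $j^*$ yields $y_2=\cdots=y_n=p-1\eqqcolon a$.

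Finally, to obtain $y_1\geq a$ I would argue by contradiction using the permutation assigning preference $1$ to car $1$ and distributing the remaining preferences (all $\geq a+1$) arbitrarily among the other cars. Car $1$ occupies $[1,y_1]$, while every other car has preference $\geq a+1$ and hence parks at a position $\geq a+1$; if $y_1<a$, then position $y_1+1\leq a$ is never occupied, contradicting that the $n$ cars of total length $m$ must fill $[1,m]$. Hence $y_1\geq a=y_2=\cdots=y_n$, which is exactly \eqref{eqn: equivmaxchar}. I expect the necessity direction to be the main obstacle, specifically correctly isolating why position $1$ is singled out; the decisive observation is that car $1$ is processed first, so any preference $\geq 2$ placed on it creates a front gap fillable only by the unique preference-$1$ car, and this rigidly pins down first $y_2,\dots,y_n$ and then $y_1$.
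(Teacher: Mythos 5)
Your proof is correct, but it takes a genuinely different and more elementary route than the paper. The paper obtains sufficiency as a byproduct of its complete characterizations of $\PAINV_n((b,a^{n-1}))$ (Theorems \ref{theorem: ACmultiple}, \ref{theorem: AClarge}, \ref{theorem: ACsmall}), each proved by a lengthy inductive analysis of the parking experiment, and it proves necessity (Theorem \ref{theorem: charmaximpliesAC}) by induction on $n$, importing the base cases $n=2,3$ from prior work (Lemmas \ref{lemma: twocarschar} and \ref{lemma: charmaximpliesACbasecase}) and using the max-entry removal lemma (Lemma \ref{lemma: removal}) together with a contrapositive case analysis on whether the last length $c$ exceeds $a$; it then recovers $y_1\geq y_2$ from the characterization theorems again (if $y_1<y_2$, Theorem \ref{theorem: ACsmall} applies with $\floor{b/a}=0$, forcing $\PAINV_n(\y)=\{(1^n)\}$). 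By contrast, you prove sufficiency by directly verifying the single witness $(1,(a+1)^{n-1})$, and your necessity argument is a self-contained ``front-gap'' analysis requiring no induction and no external base cases: placing preference $p=\min_j p_j$ on the first car opens the gap $[1,p-1]$, which only the unique preference-$1$ car can fill (all other cars seek forward from positions $\geq p$), and exactness of the tiling pins $y_{j^*}=p-1$ for every $j^*\in\{2,\dots,n\}$, after which a second gap argument at the front gives $y_1\geq a$. Both halves of your argument withstand scrutiny, including the delicate point that when $y_{j^*}>p-1$ every candidate block starting in $[1,p-1]$ contains the occupied spot $p$, so the gap survives and parking fails. What each approach buys: the paper's heavier machinery is not wasted, since the characterization theorems are needed anyway for the closed forms and enumerations of Theorem \ref{mainthm: almostconstant}, so Theorem \ref{mainthm: maxchar} comes nearly free once that work is done; your argument is considerably shorter, avoids all dependence on \cite{icermpaper2023inv}, and in fact extracts slightly finer structural information, namely that any invariant assortment of degree $n-1$ must have its minimum non-$1$ entry equal to $y_2+1$.
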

Additionally, we obtain an explicit description of $\PAINV_n(\y)$ and corresponding enumerative results.
The enumerative formulas for \ref{mainthm: almostconstant}\ref{mainthm: almostconstantirreg} are deduced via two results: an enumerative result related to the theory of the Pitman-Stanley polytope and empirical distributions \cite{pitmanstanley2002polytope} and a recursive formula for Catalan's triangle \cite{bailey1996catalan}.
\begin{mainthm}
\label{mainthm: almostconstant}
    Let $\y=(b,a^{n-1}) \in \N^n$, where $n\geq 2$.
    \begin{enumerate}[label=(\alph*)]
        \item\label{mainthm: almostconstantreg} If $a \mid b$ or $b>(n-1)a$, then $\x \in \PAINV_n(\y)$ if and only if
        \[ x_{(i)} \in \{ 1+(k-1)a:k\in [i] \} \quad \forall i \in [n]. \]
        Moreover,
        \[ |\PAINV_n(\y)|=(n+1)^{n-1} \quad \text{and} \quad |\PAINVND_n(\y)|=\frac{1}{n+1}\binom{2n}{n}. \]
        \item\label{mainthm: almostconstantirreg} Otherwise, if $a \nmid b$ and $b<(n-1)a$, then $\x \in \PAINV_n(\y)$ if and only if
        \[ x_{(i)} \in \begin{cases}
            \{ 1+(k-1)a:k\in [i] \} & \forall i\in\left[\floor{\frac{b}{a}}+1\right] \\
            \left\{ 1+(k-1)a:k \in \left[ \floor{\frac{b}{a}}+1 \right] \right\} & \text{otherwise}.
        \end{cases}
        \]
        Moreover,
        \begin{align*}
            |\PAINV_n(\y)|&=\sum_{j=0}^{n-\floor{b/a}-1}(-1)^j\binom{n}{j}\left( n-\floor{\frac{b}{a}}-1 \right)^j(n-j+1)^{n-j-1} \quad \text{and} \\
            |\PAINVND_n(\y)|&=\frac{n-\floor{b/a}+1}{n+1}\binom{n+\floor{b/a}}{\floor{b/a}}.
        \end{align*}
    \end{enumerate}
\end{mainthm}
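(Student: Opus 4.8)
The plan is to pass to \emph{lattice coordinates} and reduce both parts to (generalised) parking functions, after which the enumeration drops out of the Pitman--Stanley volume formula. Write $d\coloneqq\floor{b/a}$ and $r\coloneqq b-da$, so that $a\nmid b$ is exactly the condition $0<r<a$. The first step is to show that invariance forces $x_i\equiv 1\pmod a$ for every $i$: if some entry $v$ fails this, I would exhibit a permutation of the entries of $\x$ that pinches off a contiguous free stretch of length strictly less than $a$ (the length-$b$ car, or a misaligned length-$a$ car, parks so that the remaining free cells split into a block of full size plus a leftover of size $v-1 \bmod a \in \{1,\dots,a-1\}$ that no length-$a$ car can ever enter), stranding a later car and contradicting $\x\in\PAINV_n(\y)$. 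Granting the congruence, I substitute $w_i\coloneqq 1+(x_i-1)/a\in\N$. Under this change of variables the condition $x_{(i)}\in\{1+(k-1)a:k\in[i]\}$ becomes the ordinary parking-function bound $w_{(i)}\le i$, and the capped condition of part~\ref{mainthm: almostconstantirreg} becomes $w_{(i)}\le\min(i,d+1)$. In these coordinates each length-$a$ car occupies a single lattice block $[1+ja,(j+1)a]$, while the length-$b$ car occupies $d$ full blocks followed by a partial block of $r$ cells.

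Next I would prove the two characterizations by analysing the parking dynamics in lattice coordinates, splitting exactly along the stated case division; throughout $x_{(1)}=1$ is automatic since every parking assortment has an entry equal to $1$. When $a\mid b$ (so $r=0$) the length-$b$ car is itself lattice-aligned, the occupied region is always a union of whole blocks, and a direct induction shows that invariance is equivalent to $\mathbf{w}\in\PF_n$, i.e.\ to $w_{(i)}\le i$ as in Definition~\ref{defn: pforderstat}. When $b>(n-1)a$ the $n-1$ length-$a$ cars cannot collectively span the length-$b$ car, so any short gap it creates lies beyond every other car and strands no one; again $w_{(i)}\le i$ is the exact condition. The genuinely new regime is part~\ref{mainthm: almostconstantirreg}, where $0<r<a$ and $b<(n-1)a$: placing the length-$b$ car at a lattice position leaves a contiguous gap of size $a-r<a$ that cannot absorb a length-$a$ car. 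For necessity I would, given an assortment with $w_{(i)}>d+1$ for some $i$, assign the offending large preference to a length-$a$ car and small preferences to the others so that the length-$b$ car manufactures precisely such a blocking gap and a later length-$a$ car is stranded (generalising the failure of $(1^2,1+2a)$ for $\y=(da+r,a,a)$). For sufficiency I would run a loop-invariant argument over an arbitrary permutation, maintaining that the occupied cells form a union of lattice blocks together with at most one length-$b$ block and that a contiguous free block of size at least the incoming car's length always remains; the cap $w_{(i)}\le d+1$ is exactly what forbids a length-$a$ car from being forced into the size-$(a-r)$ gap.

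The enumeration then follows. In part~\ref{mainthm: almostconstantreg} the map $\x\mapsto\mathbf{w}$ is a bijection of $\PAINV_n(\y)$ with $\PF_n$ and of $\PAINVND_n(\y)$ with $\PFND_n$, giving $|\PAINV_n(\y)|=(n+1)^{n-1}$ and $|\PAINVND_n(\y)|=\frac{1}{n+1}\binom{2n}{n}$. In part~\ref{mainthm: almostconstantirreg} the condition $w_{(i)}\le\min(i,d+1)$ says precisely that $\mathbf{w}$ is a $\mathbf{u}$-parking function for $\mathbf{u}=(1,2,\dots,d,d+1,d+1,\dots,d+1)$, so $|\PAINV_n(\y)|=n!\,V_n(\mathbf{u})$, and I would evaluate this through the empirical-distribution/inclusion--exclusion formula attached to the Pitman--Stanley polytope (Theorem~11 of \cite{pitmanstanley2002polytope}), the alternating sum running over the $n-d-1$ strictly capped coordinates and producing the displayed formula. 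For the nondecreasing count, the nondecreasing $\mathbf{u}$-parking functions are the integer sequences $1\le w_1\le\cdots\le w_n$ with $w_i\le\min(i,d+1)$, equivalently lattice paths weakly below the staircase cut off at height $d+1$; these are enumerated by an entry of Catalan's triangle, and invoking Bailey's recursion \cite{bailey1996catalan} identifies it as $\frac{n-d+1}{n+1}\binom{n+d}{d}$ (consistent with the Catalan value of part~\ref{mainthm: almostconstantreg} in the limiting case).

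The step I expect to be the main obstacle is the sufficiency direction in case~\ref{mainthm: almostconstantirreg}: showing that the single cap $w_{(i)}\le d+1$ simultaneously guarantees parking under all $n!$ permutations. The difficulty is that the length-$b$ car may be inserted at any point in the order, and its size-$(a-r)$ gap interacts with the already-parked lattice blocks differently for different insertion orders, so the loop invariant must be strong enough to survive every case; establishing that $d+1$ is the \emph{exact} threshold, rather than an off-by-one neighbour, is where the argument is most delicate. By comparison, matching the raw $\mathbf{u}$-parking-function count to the stated closed forms is routine once the volume and Catalan-triangle inputs are in place.
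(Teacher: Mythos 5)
Your proposal follows essentially the same route as the paper's: a congruence lemma forcing $x_i\equiv 1\pmod{a}$ (Lemma~\ref{lemma: AC1moda}), order-statistic bounds with the extra cap $1+\floor{\frac{b}{a}}a$ in the irregular case, an induction over the parking process maintaining that occupied and free cells decompose into $a$-aligned blocks (shifted by $b\bmod a$ to the right of the long car), the rescaling $x_i\mapsto 1+\frac{x_i-1}{a}$ onto $\mathbf{u}$-parking functions with $\mathbf{u}=\left(1,2,\dots,\floor{\frac{b}{a}},\left(\floor{\frac{b}{a}}+1\right)^{n-\floor{b/a}}\right)$, a Pitman--Stanley count for $|\PAINV_n(\y)|$, and Catalan's triangle via Bailey's recursion for $|\PAINVND_n(\y)|$ --- this is precisely Claims~\ref{claim: simplifycount} and~\ref{claim: CTrec} and Theorems~\ref{theorem: ACmultiple}, \ref{theorem: AClarge}, and \ref{theorem: ACsmall}.

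One concrete caution about the step you correctly flag as the main obstacle. The loop invariant as you state it --- ``a contiguous free block of size at least the incoming car's length always remains'' --- is vacuously true and therefore not the needed statement: the free region always has total size $(n-k)a$ and splits into blocks whose lengths are multiples of $a$, so a block of size at least $a$ always exists somewhere. What must be shown is that such a block exists \emph{at or beyond the car's preference}, and that is where the bounds $x_{(i)}\leq 1+(i-1)a$ (and the cap in case~\ref{mainthm: almostconstantirreg}) enter quantitatively. The paper closes this with a pigeonhole contradiction: assuming no free block is reachable, it takes $S_{\text{max}}$, the largest unoccupied spot as in (\ref{eqn: Smax}), shows the alignment invariant forces $S_{\text{max}}=ta$ (or $ta+b$ to the right of car~1), and counts the cars whose preferences exceed $S_{\text{max}}$ to violate $x_{(t)}\leq 1+(t-1)a$ (respectively the cap, which in Case 6.8 of Theorem~\ref{theorem: ACsmall} rules out $S_{\text{max}}=ta+b$ outright). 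Your plan needs this counting ingredient spelled out; with it, the invariant argument goes through exactly as you envision. A final bookkeeping point: evaluating the Pitman--Stanley formula (Lemma~\ref{lemma: pitmanstanley}, with $\Delta(\mathbf{u})=(1,1^{\floor{b/a}},0,0^{n-\floor{b/a}-2})$) produces the base $\left(n-\floor{\frac{b}{a}}-j\right)^j$ in the alternating sum, as in Corollary~\ref{corollary: weirdcounting}; the base $\left(n-\floor{\frac{b}{a}}-1\right)^j$ in the theorem's display is a typo, so ``the displayed formula'' your derivation should target is the corollary's version.
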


In Section \ref{section: deg and char}, we 
study the structure of $\PAINV_n(\y)$ for any $\y\in \N^n$, which implies special properties of the degree and characteristic.
Namely, we prove that $\PAINV_n(\y)$ is closed under the operation of replacing any of its elements' entries with a $1$.
We may use this property to compute the image of the degree as a function, reveal an embedding property of the set of invariant parking assortments, and obtain a bound for $\chi(\y^+)$ in terms of $\chi(\y)$, where $\y$ is the prefix of $\y^+ \in \N^{n+1}$.

To state these results precisely, we need the following list operations: for $\mathbf{v}=(v_1,v_2,\dots,v_n) \in \N^n$, $k \in \N$, and $i \in [n]$, let $\mathbf{v}_{\widehat{i}}\coloneqq (v_1,v_2,\dots,v_{i-1},v_{i+1},\dots,v_n) \in \N^{n-1}$, $\mathbf{v}_{\vert_i} \coloneqq (v_1,v_2,\dots,v_i) \in \N^i$, and $(1^k,\mathbf{v})\coloneqq (1^k,v_1,v_2,\dots,v_n) \in \N^{n+k}$.

The inspiration for Theorem \ref{mainthm: degchar}\ref{mainthm: degchar closure} is due to~\cite{personalcommuncation}; we present an independent proof in Section \ref{section: deg and char}.
\begin{mainthm}
\label{mainthm: degchar}
    Let $\y=(y_1,y_2,\dots,y_n) \in \N^n$ and $\y^+=(\y,y_{n+1}) \in \N^{n+1}$.
    \begin{enumerate}[label=(\alph*)]
        \item\label{mainthm: degchar closure} If $(1^{n-d},\mathbf{w}) \in \PAINV_n(\y)$, where $\mathbf{w} \in \N^d_{>1}$, then $(1^{n-d+1},\mathbf{w}_{\widehat{i}}) \in \PAINV_n(\y)$ for all $i \in [d]$.
        \item\label{mainthm: degchar image} The image of $\deg: \PAINV_n(\y) \to [n-1]_0$ is $[\chi(\y)]_0$.
        In other words, for each $d \in [\chi(\y)]_0$, there exists $\x \in \PAINV_n(\y)$ with $\deg \x=d$.
        \item\label{mainthm: degchar embedding} If $\x \in \PAINV_n(\y)$, then $(1,\x) \in \PAINV_{n+1}(\y^+)$.
        In particular, we have the embedding \footnote{That is, $\PAINVND_n(\y)$ is included in $\PAINVND_{n+1}(\y^+)$ up to inserting a $1$ at the start of each of its elements.}
        \[ \eta: \begin{cases}
            \PAINVND_n(\y) & \hookrightarrow \PAINVND_{n+1}(\y^+) \\
            \x & \mapsto (1,\x).
            \end{cases} 
        \]
        \item\label{mainthm: degchar prefix} If $\chi(\y)=\alpha$, then $\chi(\y^+) \in \{ \alpha,\alpha+1 \}$.
    \end{enumerate}
\end{mainthm}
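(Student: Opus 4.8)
The plan is to isolate a single coupling lemma as the engine for all four parts and then read off (b)--(d) from (a) together with this lemma. The lemma I would prove is a \emph{cooperativity} statement: if an ordered preference list parks every car (equivalently, tiles $[m]$, where $m=\sum_i y_i$), then lowering any one of its entries to $1$ still parks every car; informally, a preference-$1$ car is maximally cooperative. Granting this, part (a) is an exchange argument. To test $(1^{n-d+1},\mathbf{w}_{\widehat{i}})$ for invariance, take any ordering of its multiset of entries; it contains one ``extra'' $1$ relative to the multiset of $(1^{n-d},\mathbf{w})$. Promote that entry back to $w_i$ to obtain an ordering of the entries of $(1^{n-d},\mathbf{w})$, which parks by the invariance hypothesis; the coupling lemma then says that demoting it back to $1$ keeps all cars parked. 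As this holds for every ordering, $(1^{n-d+1},\mathbf{w}_{\widehat{i}})\in\PAINV_n(\y)$.

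Part (b) is then immediate: choosing $\z\in\PAINV_n(\y)$ with $\deg\z=\chi(\y)$ and peeling off one non-$1$ entry at a time via (a) produces elements of every degree $\chi(\y),\chi(\y)-1,\dots,1,0$, while no element can exceed degree $\chi(\y)$ by definition and $(1^n)$ realizes degree $0$; hence the image is exactly $[\chi(\y)]_0$. For part (c) I would show directly that $(1,\x)$ is invariant for $\y^+$. The clean base case is the observation ``append a length-$y_{n+1}$ car with preference $1$ as the last car'': the first $n$ cars park exactly as they do for $\x$ (the enlarged street only adds room to the right, which the leftmost-block rule ignores while a left block is available), so they tile $[m]$, and the final car fills $[m+1,m+y_{n+1}]$. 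To reach an arbitrary ordering of the multiset $\{1\}\cup\{x_1,\dots,x_n\}$ on the lengths $\y^+$, I would delete the car carrying the new length $y_{n+1}$: the remaining $n$ cars carry either $\{x_1,\dots,x_n\}$ (if that car had preference $1$) or $\{x_1,\dots,x_n\}$ with one entry replaced by a $1$ (otherwise), each of which is invariant for $\y$ by hypothesis or by part (a); the insertion form of the coupling lemma then restores the deleted car. Injectivity of $\eta$ and preservation of monotonicity are clear, giving the embedding.

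For part (d), the lower bound $\chi(\y^+)\ge\chi(\y)$ is read off from (c): a degree-$\chi(\y)$ witness $\x$ maps to $(1,\x)\in\PAINV_{n+1}(\y^+)$ of the same degree. For the upper bound $\chi(\y^+)\le\chi(\y)+1$, I would take a nondecreasing witness $\x^+=(1^{n+1-\beta},w_1,\dots,w_\beta)$ with $\beta=\chi(\y^+)$ and delete the new car together with the largest preference $w_\beta$; using the coupling lemma to control the resulting shift in positions and total length, the truncated list $(1^{n+1-\beta},w_1,\dots,w_{\beta-1})$ should be invariant for $\y$ and has degree $\beta-1$, forcing $\chi(\y)\ge\beta-1$.

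The hard part is the coupling lemma itself. Running the two parking processes in parallel from the point where car $c$'s preference differs, the divergence is exactly that the $1$-process takes the left end of the leftmost admissible gap while the original takes a block further right, so the issue is to propagate a ``left-packed is at least as good'' comparison through the remaining cars. The naive invariant---$|O^{\mathrm{low}}\cap[t]|\ge|O^{\mathrm{high}}\cap[t]|$ for all $t$---is established at the divergence but does \emph{not} by itself preserve parkability, since a uniformly more left-loaded set can be more fragmented: for instance $\{1,3,5\}$ admits no free block of length $2$, whereas the less-loaded $\{4,5,6\}$ leaves the block $\{1,2,3\}$ free. The correct invariant must therefore track the gap structure, not just prefix counts, and the crux is to verify that taking the \emph{leftmost} admissible block---as a preference-$1$ car always does---never splits a gap in a way that obstructs a later car that the original process accommodated. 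The same gap-aware comparison is what licenses the insertion and deletion steps in (c) and (d), where one must additionally reconcile the change in total street length.
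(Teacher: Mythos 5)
Your proof collapses at its foundation: the ``coupling lemma'' --- that if an ordered preference list parks every car, then lowering any single entry to $1$ still parks every car --- is false, not merely hard. Take $\y=(1,1,2)$, so $m=4$, and $\x=(3,4,1)$: car $1$ parks at spot $3$, car $2$ at spot $4$, and car $3$ (length $2$) fills $[1,2]$, so $\x \in \PA_3(\y)$. Lowering the second entry to $1$ gives $(3,1,1)$: car $1$ parks at spot $3$, car $2$ at spot $1$, and now the free spots $\{2,4\}$ are fragmented, so car $3$ cannot park; hence $(3,1,1)\notin \PA_3(\y)$. This is exactly the fragmentation obstruction you flag in your final paragraph, but it is a counterexample to the lemma itself, not a difficulty to be overcome by a cleverer invariant. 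This is why the paper's replacement lemma (Lemma \ref{lemma: replacement}) carries the crucial extra hypothesis $x_i=\min(x_i,x_{i+1},\dots,x_n)$: under that suffix-minimum condition, $[1,x_i-1]$ is already fully occupied when car $i$ arrives, so lowering its preference does not move it at all and no coupling of two divergent processes is ever needed. The real work of the paper's proof of part \ref{mainthm: degchar closure} is the swap algorithm (Lemma \ref{lemma: swapalgo}), which exploits the \emph{invariance} of the whole orbit --- not just parkability of one ordering, which is all your lemma's hypothesis uses --- to permute into configurations where each replacement is licensed by the suffix-minimum condition. Since your exchange argument for (a) applies the false lemma to an arbitrary ordering, the gap is fatal as written.

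The remainder of your scaffolding is sound conditional on (a). Your derivation of (b) by repeated peeling matches the paper. For (c), the ``insertion form'' you invoke is the true and easy Lemma \ref{lemma: extendPA} (appending a final car with preference at most $1+\sum_{j=1}^{n}y_j$), and your case split on whether the new car carries preference $1$ or some $w_i$ is essentially the paper's argument, using (a) in the second case. For (d), the deletion of the maximal preference you describe is the paper's Lemma \ref{lemma: removal} (quoted from prior work), which is true --- but note it deletes the \emph{largest-preference entry} and truncates the length vector, and again owes nothing to a general lowering principle; your appeal to the coupling lemma to ``control the resulting shift'' is both unnecessary there and unavailable.
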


Note that applying Theorem \ref{mainthm: degchar}\ref{mainthm: degchar prefix} repeatedly implies that if $\chi(\y_{\vert_k})=\alpha$ for some $k<n$, then $\chi(\y) \in [\alpha,\alpha+n-k]$, which helps decrease the exponent, as discussed previously.
Moreover, the characteristic is monotonically increasing with respect to sequences $\{ \y^{(j)} \}_{j \in \N}$ of length vectors, where each $\y^{(j)}$ is the prefix of $\y^{(j+1)}$.

On the flip side, Theorem \ref{mainthm: degchar}\ref{mainthm: degchar image} implies that if the characteristic is known, then for every $d\in [\chi(\y)]_0$, each set $\{ 1 \}^{n-d} \times [m]^d$, which consists of candidates for $\x \in \PAINV_n(\y)$ with $\deg \x=d$, indeed contains at least one such $\x$.
Consequently, we must check each degree in $[\chi(\y)]_0$ to find invariant parking assortments.
Thus, we cannot take the approach of decreasing the exponent further.
This motivates the need to consider the 
analogous approach of ``decreasing the base," where we eliminate elements of $[m]$ that cannot be an entry of an invariant parking assortment for $\y$.
Intuitively, such elements are either too large or not compatible with partial sums of the entries of $\y$.
The former intuition is made precise in Proposition 2.2 in~\cite{icermpaper2023inv}. 
The latter intuition is the basis for Section \ref{section: sdlengths}, where we also sharpen the former intuition.
\begin{mainthm}
\label{mainthm: invsolset}
    Let $\y=(y_1,y_2,\dots,y_n) \in \N^n$, and define $\W(\y)\coloneqq \{ w \in \N:(1^{n-1},w) \in \PAINV_n(\y) \}$.
    \begin{enumerate}[label=(\alph*)]
        \item \label{mainthm: invsolset nonconstant} If $\y$ is non-constant and $w \in \W(\y)$, then $w\leq \sum_{j=1}^{n-1}y_j$.
        \item\label{mainthm: invsolset size bound} We have
        \[ \W(\y)\subseteq \{ 1+\b^\top\y:\b \in \{ 0 \} \times \{ 0,1 \}^{n-1} \} \quad \text{and} \quad |\mathcal{W}(\y)|\leq 2^{n-1}. \]
        \item\label{mainthm: invsolset supdec} If
        \begin{equation}
        \label{eqn: supdec}
            y_j>\sum_{i=j+1}^{n}y_i \quad \forall j \in [n-1],
        \end{equation}
        then equality is achieved in \ref{mainthm: invsolset size bound}.
        Moreover,
        \[ |\PAINV_n(\y)|=2^{n-1}n-n+1 \quad \text{and} \quad |\PAINVND_{n}(\y)|=2^{n-1}. \]
        \item\label{mainthm: invsolset neccequality} If equality is achieved in \ref{mainthm: invsolset size bound}, then
        \begin{equation} 
        \label{eqn: almostsupdec}
            y_1\geq y_2 \quad \text{and} \quad y_j>\sum_{i=j+1}^{n}y_i \quad \forall j \in [n-1] \setminus \{ 1 \}. 
        \end{equation}
    \end{enumerate}
\end{mainthm}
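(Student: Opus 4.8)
The plan is to reduce everything to the parking behaviour of the $n$ permutations of $(1^{n-1},w)$, one for each car $k\in[n]$ that receives the single non-one preference $w$ while all others prefer spot $1$. Writing $s_j:=\sum_{i=1}^{j}y_i$, the unit-preference cars preceding car $k$ fill $[1,s_{k-1}]$, so car $k$ (length $y_k$) parks at $[s_{k-1}+1,s_k]$ when $w\le s_{k-1}$ (after which all spots are contiguous and every car parks) and at $[w,w+y_k-1]$ when $w>s_{k-1}$, provided $w+y_k-1\le m$, leaving a gap $[s_{k-1}+1,w-1]$ for the later unit-preference cars. For \ref{mainthm: invsolset nonconstant}, placing $w$ last forces $w+y_n-1\le m=s_{n-1}+y_n$, i.e.\ $w\le s_{n-1}+1$; and if $w=s_{n-1}+1$, placing it in each position $k<n$ sends car $k$ to $[w,w+y_k-1]$, after which cars $k+1,\dots,n-1$ fill the gap down to exactly $y_k$ free spots and car $n$ parks only if $y_k=y_n$. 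Requiring this for all $k<n$ forces $\y$ constant, so non-constant $\y$ gives $w\le s_{n-1}$. For \ref{mainthm: invsolset size bound}, I place $w$ in position $1$: car $1$ occupies $[w,w+y_1-1]$, and since all $n$ cars park and their lengths sum to $m$, every spot is occupied, so $[1,w-1]$ is filled exactly by the cars parked inside it, a subset $T\subseteq\{2,\dots,n\}$. Hence $w-1=\sum_{i\in T}y_i$, which gives the inclusion and $|\W(\y)|\le 2^{n-1}$.

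For \ref{mainthm: invsolset supdec}, super-decreasing read backwards is super-increasing, so the subset sums of $\{y_2,\dots,y_n\}$ are distinct and the $2^{n-1}$ candidates are pairwise distinct; it remains to realize each $w=1+\sum_{i\in T}y_i$ in $\W(\y)$. For a placement in position $k\ge2$, super-decreasing gives $w-1=\sum_{i\in T}y_i\le\sum_{i\ge2}y_i<y_1\le s_{k-1}$, so we are in the contiguous case and all cars park. For position $1$, cars $2,\dots,n$ must be packed by first-fit into the two gaps of sizes $\sum_{i\in T}y_i$ and $\sum_{i\in T^{c}}y_i$, where $T^{c}=\{2,\dots,n\}\setminus T$; I prove by induction along the processing order that the current car fits the first gap if and only if its index lies in $T$, because if it lies in $T^{c}$ the remaining $T$-cars all come later and hence sum to less than it by super-decreasing. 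Thus first-fit routes every car to its correct gap, all cars park, and equality in \ref{mainthm: invsolset size bound} holds.

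The enumeration in \ref{mainthm: invsolset supdec} requires $\chi(\y)=1$. Since $1+y_n\in\W(\y)$ we have $\chi(\y)\ge1$; to show $\chi(\y)\le1$ I argue that no invariant assortment has two entries exceeding $1$, by placing the larger such entry on car $1$: since $y_1>\sum_{i\ge2}y_i$, car $1$ pins a block longer than all remaining cars combined, reducing the rest to the rigid two-gap packing above, whose unique solution is incompatible with a second forced non-one preference and strands a car, exactly as one checks directly in the degree-two case. Given $\chi(\y)=1$, the nondecreasing invariant assortments are precisely $(1^{n-1},w)$ for $w\in\W(\y)$, so $|\PAINVND_n(\y)|=2^{n-1}$; summing permutation-orbit sizes ($1$ for $(1^n)$ and $n$ for each of the $2^{n-1}-1$ vectors $(1^{n-1},w)$ with $w>1$) yields $|\PAINV_n(\y)|=n2^{n-1}-n+1$.

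Part \ref{mainthm: invsolset neccequality} is the main obstacle. Assuming equality, $y_1\ge y_2$ follows cleanly by placing the maximal $w=1+\sum_{i\ge2}y_i$ on car $2$: either $y_1>\sum_{i\ge2}y_i\ge y_2$ already, or else $w>s_1$ and the no-overflow condition $w+y_2-1\le m$ forces $y_2\le y_1$. The inequalities $y_j>\sum_{i>j}y_i$ for $j\ge2$ are the crux, since distinctness of subset sums only gives $y_j\ne\sum_{i>j}y_i$, so strictness must come from parking feasibility. The plan is proof by contradiction: taking the largest $j\ge2$ with $y_j<\sum_{i>j}y_i$ (so the suffix beyond $j$ is already super-decreasing), I would choose a subset $T$ and a position $k$ so that the $w$-car jumps ahead and opens a gap that the tail cars fill to within fewer spots than the unique car obliged to finish it, stranding that car and contradicting $w\in\W(\y)$. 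The difficulty is that whether a placement triggers the ``jump'' case depends on the sign of $\sum_{i>j}y_i-s_{k-1}$, so the witness $(T,k)$ must be selected adaptively; I expect to first establish $y_2>y_3>\cdots>y_n$, so that the tail cars are processed in decreasing order and the first-fit failure becomes transparent, possibly organizing the whole argument as an induction on $n$ after verifying that the equality hypothesis descends to a shorter length vector.
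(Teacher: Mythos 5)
Your parts \ref{mainthm: invsolset nonconstant} and \ref{mainthm: invsolset size bound} are correct and essentially the paper's own arguments, and your first-fit two-gap routing for the equality in \ref{mainthm: invsolset supdec} is exactly the paper's claim that the unique subset of cars summing to $s$ is precisely the set that parks in $[1,s]$. The first genuine gap is your $\chi(\y)\leq 1$ step, which the enumeration in \ref{mainthm: invsolset supdec} depends on: placing the \emph{larger} non-one entry on car $1$ does not strand a car, and in general cannot. Take $\y=(4,2,1)$ (superdecreasing, $m=7$) and the hypothetical invariant assortment $(1,w_1,w_2)=(1,2,4)$. Both permutations with $4$ on car $1$ succeed: under $(4,2,1)$ car $1$ takes $[4,7]$, car $2$ parks at its preference $[2,3]$, car $3$ fills $[1,1]$; under $(4,1,2)$ car $2$ takes $[1,2]$ and car $3$ lands at $[3,3]$. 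So the ``rigid two-gap packing'' is perfectly compatible with a second non-one preference, because the $w_1$-car can park \emph{inside} the gap at its preferred spot. The witness must be chosen the other way around, as the paper does: put the \emph{smaller} $w_1=1+\b^\top\y$ on car $1$ and the larger $w_2$ on car $k=\min\{i\in[n]:b_i=1\}$. Then superdecreasing makes cars $2,\dots,k-1$ too long for the gap $[1,\b^\top\y]$, uniqueness of $\b$ (Claim \ref{claim: bincombisuniq}) means only the support cars can fill it and car $k$ must be the first to do so, yet car $k$ prefers $w_2\geq w_1>\b^\top\y$ and never enters the gap; in the example this produces the unique failing permutation $(2,1,4)$.

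The second gap is part \ref{mainthm: invsolset neccequality} beyond $y_1\geq y_2$ (your derivation of that inequality is fine, and marginally more direct than the paper's Lemma \ref{lemma: sufffirstentrymax}): what you give is an unexecuted plan (``I would choose,'' ``I expect''), and the adaptive choice of $(T,k)$ together with a preliminary $y_2>y_3>\cdots>y_n$ is more machinery than the problem needs. Equality forces $|\mathcal{S}(\y)|=2^{n-1}$, i.e., all binary combinations of $(y_2,\dots,y_n)$ are pairwise distinct, and $\W(\y)=1+\mathcal{S}(\y)$. The paper then inducts on $p\geq 2$ with the single non-adaptive witness $\b=(0^p,1^{n-p})$: since $w=1+\sum_{i=p+1}^{n}y_i\in\W(\y)$, the experiment for $(w,1^{n-1})$ leaves the gap $[1,\sum_{i=p+1}^{n}y_i]$; if $y_p\leq\sum_{i=p+1}^{n}y_i$, the already-established inequalities give $y_j>\sum_{i=j+1}^{n}y_i>\sum_{i=p+1}^{n}y_i$ for $2\leq j\leq p-1$, so cars $2,\dots,p-1$ drive past and car $p$ parks inside the gap; the set of cars that ultimately fills the gap then yields $\mathbf{c}$ with $c_p=1$ and $\mathbf{c}^\top\y=\b^\top\y$, contradicting distinctness. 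No adaptive subset, no strict monotonicity of the tail, and no induction on $n$ are needed, so as written your proposal proves \ref{mainthm: invsolset nonconstant}, \ref{mainthm: invsolset size bound}, and the set equality in \ref{mainthm: invsolset supdec}, but not the counts in \ref{mainthm: invsolset supdec} nor part \ref{mainthm: invsolset neccequality}.
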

Lastly, we combine the approaches of decreasing the exponent and decreasing the base to show the following inclusion and upper bound.
\begin{mainthm}
\label{mainthm: PAINVNDbound}
    Let $\y \in \N^n$.
    Then $\PAINVND_n(\y) \subseteq \{ 1 \}^{n-\chi(\y)}\times \W(\y)^{\chi(\y)}$.
    In particular, 
    \[ |\PAINVND_n(\y)|\leq \binom{2^{n-1}+n-2}{n-1}. \] 
\end{mainthm}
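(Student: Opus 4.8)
The plan is to prove the set inclusion first and then read off the cardinality bound via a stars-and-bars count. Fix $\x \in \PAINVND_n(\y)$. Since $\x$ is nondecreasing with every entry at least $1$, I would write $\x=(1^{n-d},w_1,w_2,\dots,w_d)$, where every $w_j>1$ and $d=\deg\x$. By the definition of the characteristic we have $d\leq \chi(\y)$, so the first $n-\chi(\y)$ coordinates of $\x$ are forced to equal $1$ (they lie among the leading $n-d$ ones). It then remains to show that each of the last $\chi(\y)$ coordinates lies in $\W(\y)$.

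The key tool is the closure property Theorem \ref{mainthm: degchar}\ref{mainthm: degchar closure}, applied iteratively. Fixing an index $j\in[d]$, I would start from $(1^{n-d},w_1,\dots,w_d)\in\PAINV_n(\y)$ and repeatedly invoke Theorem \ref{mainthm: degchar}\ref{mainthm: degchar closure} to delete the entries $w_k$ with $k\neq j$ one at a time. After each deletion the surviving non-$1$ entries still all exceed $1$, so the hypothesis of the closure property is preserved at every intermediate stage, and after $d-1$ deletions we reach $(1^{n-1},w_j)\in\PAINV_n(\y)$. By the definition of $\W(\y)$ this says precisely that $w_j\in\W(\y)$. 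Since this holds for every $j\in[d]$, and since $(1^n)\in\PAINV_n(\y)$ yields $1\in\W(\y)$, the last $\chi(\y)$ coordinates of $\x$ — namely the $w_j$ together with the extra $1$'s appearing when $d<\chi(\y)$ — all belong to $\W(\y)$. Hence $\x\in\{1\}^{n-\chi(\y)}\times\W(\y)^{\chi(\y)}$, which is the claimed inclusion.

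For the cardinality bound I would count the nondecreasing vectors in the right-hand side. Each element of $\PAINVND_n(\y)$ is determined by its last $\chi(\y)$ coordinates, which form a nondecreasing sequence of length $\chi(\y)$ drawn from $\W(\y)$; the nondecreasing constraint across the boundary with the leading $1$'s is automatic since $1$ is the minimum of $\N$. The number of such sequences is the multiset coefficient, giving $|\PAINVND_n(\y)|\leq\binom{|\W(\y)|+\chi(\y)-1}{\chi(\y)}$. I would then invoke $|\W(\y)|\leq 2^{n-1}$ from Theorem \ref{mainthm: invsolset}\ref{mainthm: invsolset size bound} and $\chi(\y)\leq n-1$, together with the elementary monotonicity of $\binom{s+k-1}{k}$ in both $s\geq 1$ and $k\geq 0$, to conclude $\binom{|\W(\y)|+\chi(\y)-1}{\chi(\y)}\leq\binom{2^{n-1}+n-2}{n-1}$.

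This argument is largely an assembly of the two earlier theorems, so there is no single deep obstacle. The step demanding the most care is verifying that the iterated application of Theorem \ref{mainthm: degchar}\ref{mainthm: degchar closure} stays inside $\PAINV_n(\y)$ at every stage, which holds because after each deletion the remaining non-$1$ entries still all exceed $1$. A secondary point is the monotonicity of the multiset coefficient in the final step; this follows from the ratio identity $\binom{s+k}{k+1}/\binom{s+k-1}{k}=(s+k)/(k+1)\geq 1$ for $s\geq 1$, together with the analogous computation $\binom{s+k}{k}/\binom{s+k-1}{k}=(s+k)/s\geq 1$ for growth in $s$.
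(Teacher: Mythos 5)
Your proposal is correct and takes essentially the same route as the paper: the inclusion via iterated application of Theorem \ref{mainthm: degchar}\ref{mainthm: degchar closure} is exactly the paper's Lemma \ref{lemma: invsolset} (which you have simply inlined), and the bound is the same stars-and-bars count $\binom{W+C-1}{C}$ with $W\leq 2^{n-1}$ from Theorem \ref{mainthm: invsolset}\ref{mainthm: invsolset size bound} and $C\leq n-1$. The only cosmetic difference is that you establish monotonicity of the multiset coefficient by ratio identities where the paper uses Pascal's identity.
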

We conclude in Section \ref{section: openproblems} with several open problems to spur future study.
\section{Length Vectors of Minimal and Maximal Characteristic}
\label{section: almostconstant}
In this section, we study the inverse problem of determining $\y \in \N^n$ of a given characteristic $\alpha$.
Doing so can still allow us to recover $\PAINV_n(\y)$; notably, this is immediate for $\alpha=0$, and it turns out that if $\alpha=n-1$, then we can explicitly compute $\PAINV_n(\y)$.
In \S\ref{subsec: minchar}, we obtain a direct necessary condition for $\y$ to be of minimal characteristic, thereby establishing Theorem \ref{mainthm: neccminchar}.
In contrast, \S\ref{subsec: maxchar} proves a simple equivalent condition for $\y$ to be of maximal characteristic and provides a closed form of $\PAINV_n(\y)$ for all such $\y$.
By establishing a bijection between certain $\mathbf{u}$-parking functions and elements of $\PAINV_n(\y)$, we utilize a result of Pitman and Stanley to enumerate $|\PAINV_n(\y)|$; we deduce a recursive formula to enumerate $|\PAINVND_n(\y)|$.
Altogether, \S\ref{subsec: maxchar} proves Theorems \ref{mainthm: maxchar} and \ref{mainthm: almostconstant}.
\subsection{Minimal Characteristic}
\label{subsec: minchar}
Recall the following result for minimally invariant $\y \in \N^3$.
\begin{theorem}[Theorem 5.3 in ~\cite{icermpaper2023inv}]
\label{theorem: mi3}
    Let $\y=(y_1,y_2,y_3)\in \N^3$. 
    Then, $\y$ is minimally invariant if and only if $y_1<\min(y_2,y_3)$ and $y_2\neq y_1+y_3$.
\end{theorem}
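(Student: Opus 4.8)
The plan is to prove the two implications separately, drawing on the general machinery of the paper. The forward implication is immediate: the conditions $y_1 < \min(y_2, y_3)$ and $y_2 \neq y_1 + y_3$ are exactly the $n = 3$ instance of Theorem~\ref{mainthm: neccminchar}, since $\sum_{j \in [3] \setminus \{2\}} y_j = y_1 + y_3$. Hence if $\y$ is minimally invariant, Theorem~\ref{mainthm: neccminchar} yields both conditions at once, and no further work is needed for necessity.

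The substance lies in sufficiency. First I would reduce minimal invariance to the cleaner assertion $\W(\y) = \{1\}$. By definition $\y$ is minimally invariant iff $\chi(\y) = 0$, and by Theorem~\ref{mainthm: degchar}\ref{mainthm: degchar image} this holds iff $\PAINV_3(\y)$ contains no element of positive degree, equivalently no element of degree $1$. Since $\PAINV_3(\y)$ is permutation-closed and its degree-$1$ elements are precisely the permutations of $(1, 1, w)$ with $w > 1$, their absence is equivalent to $\W(\y) = \{1\}$ (noting $1 \in \W(\y)$ always, as $(1^3) \in \PAINV_3(\y)$). It therefore suffices to exclude every $w > 1$ from $\W(\y)$. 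Here Theorem~\ref{mainthm: invsolset}\ref{mainthm: invsolset size bound} does the heavy lifting, confining $\W(\y)$ to $\{1,\, 1 + y_2,\, 1 + y_3,\, 1 + y_2 + y_3\}$ and leaving only three candidates to eliminate.

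For each candidate $w$ I would exhibit a single permutation of $(1, 1, w)$ failing to be a parking assortment, certifying $w \notin \W(\y)$. For $w = 1 + y_2 + y_3$, the arrangement $(1, 1, w)$ already fails: cars $1$ and $2$ fill $[1, y_1 + y_2]$, so the last car parks only if $w \leq y_1 + y_2 + 1$, forcing $y_3 \leq y_1$ and contradicting $y_1 < y_3$. For $w = 1 + y_3$, I would use $(1, 1 + y_3, 1)$: car $1$ occupies $[1, y_1]$, the length-$y_2$ car parks at $[1 + y_3, y_2 + y_3]$ (its preferred block is free because $y_3 > y_1$), and the remaining free region splits into contiguous blocks of sizes $y_3 - y_1$ and $y_1$, both strictly below $y_3$ (using $y_1 \geq 1$ and $y_1 < y_3$), so the final length-$y_3$ car cannot be seated. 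For $w = 1 + y_2$, I would again use $(1, 1 + y_2, 1)$ and split on the sign of $y_2 - (y_1 + y_3)$: if $y_2 > y_1 + y_3$, the length-$y_2$ car fails outright for lack of contiguous room past its preference (only $y_1 + y_3 < y_2$ spots remain); if $y_2 < y_1 + y_3$, it parks at $[1 + y_2, 2y_2]$ and leaves free blocks of sizes $y_2 - y_1$ and $y_1 + y_3 - y_2$, both below $y_3$ (using $y_1 < y_2$ and $y_2 < y_1 + y_3$), so the last car fails. The boundary $y_2 = y_1 + y_3$ is exactly the case we have assumed away; tellingly, one checks directly that there $(1, 1, 1 + y_2)$ is genuinely invariant, so $1 + y_2$ re-enters $\W(\y)$, confirming that the second hypothesis is sharp.

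I expect the $w = 1 + y_2$ case to be the main obstacle: unlike the other two, it genuinely invokes $y_2 \neq y_1 + y_3$, and the analysis bifurcates according to whether the length-$y_2$ car overshoots the lot. Throughout, the delicate point is tracking the forward-scanning parking rule correctly—verifying in each case that every leftover free region decomposes into contiguous blocks all of size less than $y_3$, so that no forward scan can accommodate the last car. Once all three candidates are eliminated we obtain $\W(\y) = \{1\}$, hence $\chi(\y) = 0$, completing the converse.
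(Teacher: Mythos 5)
Your proposal is correct, but it cannot be compared against a proof in this paper for a simple reason: the paper does not prove this statement at all. Theorem~\ref{theorem: mi3} is recalled verbatim from \cite{icermpaper2023inv} (its Theorem 5.3) as known input, and the paper's surrounding text only generalizes its necessity direction to all $n$ (that is Theorem~\ref{mainthm: neccminchar}). What you have done instead is derive the $n=3$ characterization as a corollary of the present paper's general machinery: necessity from Theorem~\ref{mainthm: neccminchar} specialized to $n=3$; sufficiency by reducing minimal invariance to $\W(\y)=\{1\}$ via Theorem~\ref{mainthm: degchar}\ref{mainthm: degchar image}, confining $\W(\y)$ to the four candidates $\{1,\,1+y_2,\,1+y_3,\,1+y_2+y_3\}$ via Theorem~\ref{mainthm: invsolset}\ref{mainthm: invsolset size bound} (i.e.\ Lemma~\ref{lemma: bincomb}), and killing the three nontrivial candidates with explicit failing permutations. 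I checked your three certificates and they are sound: for $w=1+y_2+y_3$ the nondecreasing arrangement already violates Lemma~\ref{lemma: nondecreasing} since $y_3>y_1$; for $w=1+y_3$ the leftover blocks have sizes $y_3-y_1$ and $y_1$, both less than $y_3$; and for $w=1+y_2$ your bifurcation on $y_2\gtrless y_1+y_3$ is exactly where the hypothesis $y_2\neq y_1+y_3$ enters, with leftover blocks of sizes $y_2-y_1$ and $y_1+y_3-y_2$ in the subcritical case. The one point you should make explicit is non-circularity: this is legitimate only because Theorems~\ref{mainthm: neccminchar}, \ref{mainthm: degchar}, and \ref{mainthm: invsolset} are proved in this paper without any appeal to Theorem~\ref{theorem: mi3} (their proofs rest on Lemma~\ref{lemma: nondecreasing} and the self-contained Lemmas~\ref{lemma: extendPA}, \ref{lemma: replacement}, \ref{lemma: swapalgo}, \ref{lemma: bincomb}), so your argument is a valid independent reproof rather than a circle. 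The trade-off: the original reference presumably proceeds by direct case analysis on all preference vectors in $[m]^3$, whereas your route buys a much smaller search (four candidates rather than all of $[m]$) at the cost of invoking the heavier general theory; it also cleanly exhibits the sharpness of $y_2\neq y_1+y_3$, matching the second construction in the paper's proof of Theorem~\ref{mainthm: neccminchar}.
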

Thus, one might ask whether this can be generalized for $n>3$.
It turns out that the natural extension of this result is necessary for all $n \in \N$ but not sufficient.
We will first prove the necessity, which is Theorem \ref{mainthm: neccminchar}.
The failure of the sufficiency will be easily established after we prove Theorem \ref{mainthm: degchar}\ref{mainthm: degchar closure} in Section \ref{section: deg and char}.
\begin{proof}[Proof of Theorem \ref{mainthm: neccminchar}]
    We consider the contrapositive. 
    
    Assume first that $y_1\geq \min(y_2,y_3,\dots,y_n)$. 
    We claim $(1^{n-1},1+\min \y) \in \PAINV_n(\y)$.
    As $y_1\geq \min \y$, we have $1+\min \y\leq 1+\sum_{j=1}^{i}y_j$ for all $i \in [n-1]$, so $(1^i,1+\min \y) \in \PA_n(\y_{\vert_{i+1}})$ by Lemma \ref{lemma: nondecreasing}. 
    It follows that $(1^i,1+\min \y,1^{n-i-1}) \in \PA_n(\y)$.
    Indeed, under its parking experiment, the last $n-i-1$ cars, all with preference $1$, are forced to successively fill $[1+\sum_{j=1}^{i+1}y_j,m]$, as the first $i+1$ cars already occupy $[1,\sum_{j=1}^{i+1}y_j]$ since $(1^i,1+\min \y) \in \PA_n(\y_{\vert_{i+1}})$.
    Lastly, to show that $(1+\min \y,1^{n-1}) \in \PA_n(\y)$, note that under its parking experiment, the first car leaves $[1,\min \y]$ unoccupied, and these spots can only be filled by one car. 
    The remaining cars, all with preference $1$, either park starting at the first empty spot after $[1,\min\y]$ (if its length exceeds $\min\y$) or fill $[1,\min \y]$ (if its length is precisely $\min \y$ and no other cars with length $\min \y$ have attempted to park). 
    For the latter possibility, there must exist a smallest index $j>1$ such that $j=\argmin \y$ by assumption.
    Thus, $[1,\min\y]$ will be filled by car $j$, after which point $[1,s]$ is occupied for some $s \in \N$, so cars $j+1,j+2,\dots,n$ will successively fill $[s+1,m]$. 
    Hence, $(1^{n-1},1+\min\y) \in \PAINV_n(\y)$, as needed.

    Now, assume $y_2=\sum_{j \in [n] \setminus \{ 2 \}}y_j$. 
    We claim that $\x=(1^{n-1},1+y_2) \in \PAINV_n(\y)$. 
    To see this, note first that $(1+y_2,1^{n-1}) \in \PA_n(\y)$, as under its parking experiment, the first two cars will occupy $[1,y_1+y_2]$, and the remaining cars all have preference $1$, so they will park successively in $[1+y_1+y_2,m]$. 
    Moreover, $(1,1+y_2,1^{n-2}) \in \PA_n(\y)$ because under its parking experiment, the first car parks in $[1,y_1]$, while the second car parks in $[1+y_2,2y_2]=[m-y_2+1,m]$ since $m=\sum_{i=1}^{n}y_i=2y_2$. 
    Since $\sum_{j=3}^{n}y_j=m-y_1-y_2=y_2-y_1$ and the remaining cars all have preference $1$, they will park successively in $[1+y_1,m-y_2]$. 
    Otherwise, let $\x'=(x_1',x_2',\dots,x_n')$ be a permutation of $\x$ such that $x_j'=1+y_2$, where $j>2$, and consider the parking experiment for $\x'$. 
    Cars $1,2,\dots,j-1$ all have preference $1$, so they will occupy $[1,\sum_{i=1}^{j-1} y_i]$, where $|[1,\sum_{i=1}^{j-1} y_i]|>y_2$. 
    As car $j$ has preference $1+y_2$, it will occupy $[1+\sum_{i=1}^{j-1}y_i,\sum_{i=1}^{j}y_i]$. 
    Lastly, any remaining cars again all have preference $1$, so they will park successively in $[1+\sum_{i=1}^{j}y_i,m]$.
    Hence, $(1^{n-1},1+y_2) \in \PAINV_n(\y)$, as needed.

    Therefore, if $y_1\geq \min(y_2,y_3,\dots,y_n)$ or $y_2=\sum_{j \in [n] \setminus \{ 2 \}}y_j$, then $\chi(\y)\geq 1$, as desired.
\end{proof}
\subsection{Maximal Characteristic}
\label{subsec: maxchar}
Recall the following theorem on $\PAINV_n(\y)$ for constant $\y$.
\begin{theorem}[Theorem~3.1 in \cite{icermpaper2023inv}]
\label{theorem: constant}
    Let $\y = (a^n) \in\N^n$ and $\x = (x_1, x_2, \ldots, x_n) \in [m]^n=[\sum_{i=1}^{n}y_i]^n$.
    We have $\x \in \PAINV_{n}(\y)$ if and only if
    \begin{equation}
    \label{eqn: constant}
         x_{(i)} \in \{ 1+(k-1)a:k \in [i] \} \quad \forall i \in [n].
    \end{equation}
\end{theorem}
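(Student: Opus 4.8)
The plan is to reduce everything to \emph{classical} parking functions via an $a$-fold compression, exploiting that for a constant length vector every car occupies exactly one ``block.'' Write $m=na$ and, for $k\in[n]$, let $B_k\coloneqq[1+(k-1)a,ka]$ be the $k$-th standard block. The first fact I would record is a \emph{rigidity lemma}: if $\z\in\PA_n((a^n))$, then the $n$ cars tile $[1,m]$ and each car's occupied interval is some $B_k$, the blocks used being all of $B_1,\dots,B_n$. This is immediate, since a partition of $[1,na]$ into $n$ contiguous intervals of length $a$ is forced to equal $B_1,\dots,B_n$ (the leftmost interval must cover spot $1$, hence equals $B_1$, and one proceeds inductively). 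I would then set up the compression $c_i\coloneqq(x_i-1)/a+1$, valid and landing in $[n]$ whenever $x_i\equiv 1\pmod a$, and observe that \eqref{eqn: constant} is equivalent to the conjunction of (i) $x_i\equiv 1\pmod a$ for every $i$ and (ii) $c_{(i)}\le i$ for all $i$, the latter being exactly the order-statistic characterization of $\PF_n$ from Definition \ref{defn: pforderstat}.

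For the forward direction, suppose $\x\in\PAINV_n((a^n))$. I would first show the congruence (i) is necessary using invariance together with rigidity: if some $x_j\not\equiv 1\pmod a$, then the permutation of $\x$ placing that car first is still a parking assortment, yet its first car parks at $[x_j,x_j+a-1]$ (or fails outright if $x_j+a-1>m$), an interval that is not a standard block since $x_j$ is not a block start; this contradicts rigidity (or, in the overflow case, contradicts membership in $\PA_n$ directly). Hence every entry is $\equiv 1\pmod a$. Given (i), rigidity assigns to car $i$ a block $B_{k_i}$ with $k_1,\dots,k_n$ a permutation of $[n]$, and since a car never parks before its preference we get $k_i\ge c_i$. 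A short counting argument then yields (ii): for each $i$ there are exactly $i$ indices $j$ with $k_j\le i$, and each such $j$ satisfies $c_j\le k_j\le i$, so at least $i$ of the $c_j$ are $\le i$, forcing $c_{(i)}\le i$. Combining (i) and (ii) gives \eqref{eqn: constant}.

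For the converse, assume \eqref{eqn: constant}, so (i) and (ii) hold. Since the condition depends only on the multiset of entries, it suffices to establish the implication ``\eqref{eqn: constant} $\Rightarrow\x\in\PA_n((a^n))$'' for an arbitrary $\x$; applying it to every permutation of a fixed such $\x$, each of which again satisfies \eqref{eqn: constant}, then yields invariance. The key step, which I expect to be the main obstacle, is a \emph{process isomorphism}: under the congruence (i), the greedy $a$-parking dynamics for $\x$ are conjugate to the classical parking dynamics for $\mathbf c=(c_1,\dots,c_n)$, with car $i$ occupying $B_k$ in the former exactly when it occupies spot $k$ in the latter. I would prove this by induction on the number of parked cars, maintaining the invariant that the occupied region is always a union of standard blocks; the delicate point is verifying that, since every preference sits at a block boundary and the available spots form a union of full blocks, the ``first available run of $a$ contiguous spots at or after $x_i$'' always begins at a block start, so no misalignment can arise and the ``cannot park'' events coincide on the two sides. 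Granting the isomorphism, $\x\in\PA_n((a^n))$ iff $\mathbf c\in\PF_n$, and by (ii) together with Definition \ref{defn: pforderstat} we have $\mathbf c\in\PF_n$; hence $\x\in\PA_n((a^n))$, and the symmetry of \eqref{eqn: constant} upgrades this to $\x\in\PAINV_n((a^n))$, completing the proof.
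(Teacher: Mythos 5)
Your proof is correct, but it takes a genuinely different route from the machinery this paper uses for statements of this type. The theorem itself is quoted from \cite{icermpaper2023inv}, and the paper's own analogous arguments (Theorems \ref{theorem: ACmultiple} and \ref{theorem: AClarge}, which specialize to the constant case at $b=a$) proceed by direct induction on the parking experiment: congruence and order-statistic bounds are first extracted via ad hoc permutations (Lemmas \ref{lemma: AC1moda} and \ref{lemma: orderstatbound}), and sufficiency is then established by tracking the unoccupied region $\mathcal{U}$ and deriving contradictions from a maximal empty spot $S_{\text{max}}$ in a three-case analysis. You instead compress by blocks $B_k=[1+(k-1)a,ka]$ and prove a process isomorphism with classical parking dynamics, so that $\x\in\PA_n((a^n))$ iff $\mathbf{c}\in\PF_n$; your rigidity lemma is sound (a successful parking tiles $[1,na]$, and the tiling by length-$a$ intervals is forced to be $B_1,\dots,B_n$), your counting argument for $c_{(i)}\le i$ via the permutation $(k_1,\dots,k_n)$ is clean and in fact needs only a single successful experiment rather than full invariance, and the key invariant in the converse---that the free region is always a union of full blocks, so every search resolves at a block start and failure events coincide on both sides---is exactly the right delicate point and you verify it correctly. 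What your approach buys is brevity and conceptual transparency: the characterization and the enumerations $|\PAINV_n(\y)|=(n+1)^{n-1}$, $|\PAINVND_n(\y)|=C_n$ fall out instantly from $|\PF_n|$, and indeed the paper itself deploys the same affine rescaling later, in Claim \ref{claim: simplifycount}, though only as a counting bijection \emph{after} the characterization is proved. What the paper's heavier direct induction buys is generality: your isomorphism leans on the perfect block alignment available only when all lengths are equal, and it breaks for $\y=(b,a^{n-1})$ with $a\nmid b$, where occupied intervals fall into two residue classes ($1\bmod a$ to the left of car $1$, $1+b\bmod a$ to the right) and the paper's $S_{\text{max}}$ case analysis is designed precisely to absorb that misalignment.
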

We now perturb $\y$ and study length vectors of the form $(b,a^{n-1})$, where $a,b \in \N$ and $n\geq 2$. 
We will refer to such $\y$ as ``almost constant" (note that constant $\y$ also fall into this classification).

As seen in the statement of Theorem \ref{mainthm: almostconstant}, it turns out that even when $a\neq b$, under certain conditions, this perturbation of $\y$ does not affect $\PAINV_n(\y)$. 

First, we prove two intermediate steps for the proof of the closed forms of $\PAINV_n(\y)$ as described in Theorem \ref{mainthm: almostconstant}.
For brevity, for parking spots $S,E \in \N$, let the interval $[S,E]\coloneqq \{ S,S+1,\dots,E-1,E \}$ denote the set of parking spots numbered between $S$ and $E$, inclusive; define its length to be $|[S,E]|\coloneqq |\{ S,S+1,\dots,E-1,E \}|=E-S+1$.
\begin{lemma}
\label{lemma: AC1moda}
    Let $\y = (b,a^{n-1}) \in\mathbb{N}^n$, where $n\geq 2$, and $\x = (x_1, x_2, \dots, x_n) \in [m]^n$.
    If $\x \in \PAINV_n(\y)$, then 
    \[ x_i\equiv 1 \pmod{a} \quad \forall i \in [n]. \]
\end{lemma}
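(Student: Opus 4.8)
The plan is to prove the congruence one coordinate at a time, using permutation invariance to force the entry in question onto the single car of length $b$ (car $1$), which parks first and therefore occupies a completely predictable block. Fix $j \in [n]$ and set $p\coloneqq x_j$. Since $\x \in \PAINV_n(\y)$, the permutation $\x'$ of $\x$ obtained by moving $p$ into the first coordinate again lies in $\PA_n(\y)$. In the parking experiment for $\x'$, car $1$ parks first, so spots $p,p+1,\dots,p+b-1$ are all empty when it arrives; validity of $\x'$ forces $p+b-1\leq m$ (otherwise the length-$b$ car could not park at all), and hence car $1$ occupies exactly the block $[p,p+b-1]$.

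Next I would analyze the region $[1,p-1]$ lying strictly to the left of car $1$'s block. Because $\x' \in \PA_n(\y)$ and $\sum_{i=1}^{n}y_i=m$, every spot of $[1,m]$ is occupied at the end of the experiment as a disjoint union of contiguous car-blocks; in particular every spot of $[1,p-1]$ is covered, and never by car $1$, whose block is $[p,p+b-1]$. Thus each spot of $[1,p-1]$ is covered by one of the $n-1$ cars of length $a$. The decisive observation is that no length-$a$ car can straddle the boundary between spots $p-1$ and $p$: such a car occupies a contiguous block of length $a$, and spot $p$ already belongs to car $1$, so any length-$a$ car meeting $[1,p-1]$ must have its entire block contained in $[1,p-1]$. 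Consequently $[1,p-1]$ is a disjoint union of blocks each of length $a$, which forces $a\mid (p-1)$, i.e. $p\equiv 1 \pmod{a}$. Running this over all $j \in [n]$ yields the lemma.

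The hard part will be the boundary/tiling step, i.e.\ justifying cleanly that the cars covering $[1,p-1]$ lie entirely within it, so that $[1,p-1]$ is exactly tiled by length-$a$ intervals. This rests on two simple but worth-stating facts: that car $1$ parks first and hence occupies precisely $[p,p+b-1]$, and that a parking assortment fills $[1,m]$ completely with pairwise disjoint contiguous car-blocks, so no block can simultaneously contain the occupied spot $p$ and meet $[1,p-1]$. The degenerate cases ($p=1$, where the region is empty and $a\mid 0$, and $p+b-1=m$, where there is simply no region to the right) are immediate and need no separate treatment.
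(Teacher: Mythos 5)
Your proof is correct and is essentially the paper's own argument: the paper likewise permutes $\x$ so that $x_j$ becomes the preference of the first (length-$b$) car and concludes that the empty region $[1,x_j-1]$ must be exactly tiled by length-$a$ cars, merely phrasing it contrapositively (if $x_j=aq+r$ with $r\neq 1$, the region's length is not a multiple of $a$, so parking fails). Your direct formulation, with the boundary/tiling step spelled out, is a faithful equivalent.
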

\begin{proof}
    We will prove the contrapositive. 
    Assume that $x_j \not \equiv 1\pmod{a}$ for some $j \in [n]$.
    
    By Euclidean division, we may write $x_j=aq+r$, where $q \in \N_0\coloneqq \N \cup \{ 0 \}$, and $r\neq 1$ is a remainder (i.e. $r\in [a-1]_0 \setminus \{ 1 \}$). 
    Let $\x'=(x_1',x_2',\dots,x_n')$ be any permutation of $\x$ such that $x_1'=x_{j}$. 
    Under the parking experiment for $\x'$, the first car will occupy the $b$ spots $[aq+r,aq+r+b-1]$, which leaves $[1,aq+r-1]$ empty. 
    As $r\neq 1$, $|[1,aq+r-1]|$ is not an integral multiple of $a$.
    In particular, none of the remaining cars, which all have length $a$, can precisely fill $[1,aq+r-1]$. Thus, we have $\x' \notin \PA_n(\y)$ and $\x \notin \PAINV_n(\y)$. 
\end{proof}
\begin{lemma}
\label{lemma: orderstatbound}
    Let $\y=(b,a^{n-1}) \in \N^n$, where $n\geq 2$, and $\x = (x_1,x_2,\dots,x_n) \in [m]^n$.
    If $\x \in \PAINV_n(\y)$, then 
    \[ x_{(i)}\leq 1+(i-1)a \quad \forall i \in [n]. \]
\end{lemma}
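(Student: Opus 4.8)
The plan is to argue by contraposition: assuming the order-statistic bound fails at some index $i$, I will exhibit a single permutation of $\x$ that cannot park, contradicting $\x \in \PAINV_n(\y)$. Concretely, suppose $x_{(i)} > 1+(i-1)a$ for some $i \in [n]$. The governing structural observation is that a car never parks to the left of its preference, so its leftmost occupied spot is at least its preferred spot; consequently a car can occupy a spot of the initial block $[1,1+(i-1)a]$ only if its preference is at most $1+(i-1)a$. This is exactly the mechanism behind the classical order-statistic characterization of $\mathbf{u}$-parking functions, adapted to the block structure of $\y=(b,a^{n-1})$.

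Call a preference value \emph{small} if it is at most $1+(i-1)a$ and \emph{large} otherwise. The failure assumption $x_{(i)} > 1+(i-1)a$ forces at most $i-1$ of the $n$ preferences to be small (they are among $x_{(1)},\dots,x_{(i-1)}$), while at least one preference, namely $x_{(i)}$, is large. Since $\y$ has exactly $n-1 \ge i-1$ cars of length $a$, I can pick a permutation $\x'$ of $\x$ that assigns every small preference to a length-$a$ car and hands the single length-$b$ car one of the large preferences; this is a genuine rearrangement because the small values fit on the $a$-cars by the count $i-1 \le n-1$, and a large value remains available for the $b$-car.

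Under the parking experiment for $\x'$, all $1+(i-1)a$ spots of $[1,1+(i-1)a]$ must be filled (every spot of $[1,m]$ is filled in a valid assortment, and this block lies inside $[1,m]$ since $1+(i-1)a \le 1+(n-1)a \le b+(n-1)a = m$). By the observation above, only cars with small preferences can reach this block, and by construction these are all length-$a$ cars, at most $i-1$ in number; together they occupy at most $(i-1)a < 1+(i-1)a$ spots. Hence some spot of the block stays empty, so $\x' \notin \PA_n(\y)$, contradicting $\x \in \PAINV_n(\y)$. The one genuinely delicate point, and the step I expect to be the main obstacle, is the length-$b$ car: the naive spot-count collapses if the $b$-car, which may be long, lands in the left block, so the crux is the pigeonhole argument guaranteeing it can always be shunted onto a large preference while the $i-1$ small preferences are absorbed by length-$a$ cars. (One could instead invoke Lemma~\ref{lemma: AC1moda} to sharpen the block to $[1,ia]$, but this is not needed for the stated inequality.)
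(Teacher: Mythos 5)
Your proof is correct, and it takes a recognizably different---in fact, dual---counting route from the paper's. Both arguments proceed by contraposition and exhibit a single bad permutation in which the length-$b$ car receives a preference exceeding $1+(i-1)a$, but the paper counts on the right: it first invokes Lemma~\ref{lemma: AC1moda} to write $x_{(j)}=1+qa$ with $q\geq j$, places $x_{(j)}$ on car $1$, and observes that at least $n-j+1$ cars, of total length at least $b+(n-j)a$, must fit into $[1+qa,m]$, whose length is only $b+(n-q-1)a$---an overflow. You count on the left: at most $i-1$ preferences are small, all carried by length-$a$ cars under your permutation, so the block $[1,1+(i-1)a]$ receives at most $(i-1)a<1+(i-1)a$ occupied spots and some spot stays empty in any successful experiment (all $m$ spots must be filled since the lengths sum to $m$)---a deficit. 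Your version buys independence from Lemma~\ref{lemma: AC1moda}, whose use in the paper is a convenience rather than a necessity, exactly as your closing parenthetical suggests, and the deficit count avoids any arithmetic with the quotient $q$; the paper's right-tail computation, on the other hand, matches the $S_{\mathrm{max}}$-style bookkeeping reused in the case analyses of Theorems~\ref{theorem: ACmultiple}--\ref{theorem: ACsmall}, so it integrates more tightly with what follows. One remark: the ``pigeonhole'' step you flag as the main obstacle is actually immediate---there is exactly one length-$b$ car and at least one large value (namely $x_{(i)}$ itself), so handing the $b$-car a large value automatically routes every small value to a length-$a$ car, and nothing beyond $i-1\leq n-1$ is needed.
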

\begin{proof}
    Again, we argue using the contrapositive.
    Assume that $x_{(j)}>1+(j-1)a$ for some $j \in [n]$.
    Then $x_{(j)}=1+qa$ for some $q \in [n-1]$ by Lemma \ref{lemma: AC1moda}.
    Let $\x'=(x_1',x_2',\dots,x_n')$ be any permutation of $\x$ such that $x_1'=x_{(j)}$.
    Under the parking experiment for $\x'$, since $x_1'=x_{(j)}\leq x_{(j+1)}\leq \cdots \leq x_{(n)}$, there are at least $n-j+1$ cars, including the first car with length $b$, preferring to park in $[1+qa,m]$. 
    The total length of these cars is then at least $b+(n-j)a$, so to ensure that they all can park, $|[1+qa,m]|\geq b+(n-j)a$ must hold. 
    However,
    \[ |[1+qa,m]|=|[1+qa,b+(n-1)a]|=b+(n-q-1)a<b+(n-j)a. \]
    Hence, $\x' \notin \PA_n(\y)$, so $\x \notin \PAINV_n(\y)$, as desired.
\end{proof}
We are now ready to prove the closed form of $\PAINV_n(\y)$.
\begin{theorem}
\label{theorem: ACmultiple}
    Let $\y = (b,a^{n-1}) \in\mathbb{N}^n$, where $n\geq 2$, and $\x = (x_1, x_2, \ldots, x_n) \in [m]^n$.
    If 
    $a \mid b$, then $\x \in \PAINV_n(\y)$ if and only if 
    \begin{equation}
    \label{eqn: ACmultiple}
         x_{(i)} \in \{ 1+(k-1)a:k\in [i] \} \quad \forall i \in [n].
    \end{equation} 
\end{theorem}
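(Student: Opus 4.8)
The plan is to prove the two directions separately; the forward implication comes essentially for free from the preceding lemmas, and all the real work lies in the converse.

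For the forward direction, suppose $\x \in \PAINV_n(\y)$. Lemma \ref{lemma: AC1moda} forces $x_i\equiv 1 \pmod a$ for every $i$, so each order statistic can be written as $x_{(i)}=1+(c_i-1)a$ with $c_i \in \N$. Lemma \ref{lemma: orderstatbound} gives $x_{(i)}\leq 1+(i-1)a$, which is exactly $c_i\leq i$. Since also $c_i\geq 1$, I conclude $x_{(i)} \in \{ 1+(k-1)a:k\in [i] \}$ for all $i$, as claimed. Note the hypothesis $a\mid b$ is not needed here.

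For the converse I would assume $\x$ satisfies \eqref{eqn: ACmultiple}, write $b=\ell a$ with $\ell \in \N$, and show every permutation $\x'$ of $\x$ lies in $\PA_n(\y)$. The first step is a block reduction: since every entry of $\x'$ is congruent to $1$ modulo $a$ and every car length is a multiple of $a$, an induction on the number of parked cars shows that the occupied region is always a union of the width-$a$ blocks $[1+(k-1)a,ka]$. Consequently the entire parking experiment projects onto one on $M\coloneqq \ell+n-1$ unit blocks, with a single ``fat'' car of block-length $\ell$ (car $1$) and $n-1$ unit cars, with block-preferences $c_i\coloneqq (x_i'-1)/a+1$ whose sorted values satisfy $c_{(i)}\leq i$. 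It then remains to show this block system parks for every permutation. Because car $1$ always moves first, it occupies the $\ell$ contiguous blocks $[v,v+\ell-1]$, where $v\leq c_{(n)}\leq n$ guarantees it fits; the remaining $n-1$ unit cars must then fill the $n-1$ blocks $[1,v-1]\cup[v+\ell,M]$. For unit cars the final occupancy is order-independent, so it suffices to verify the standard occupancy inequality: for each threshold $t$, the number of unit cars with block-preference $\geq t$ is at most the number of available blocks that are $\geq t$. Translating $c_{(i)}\leq i$ into the tail bound $|\{ i:c_i\geq t \}|\leq n-t+1$ and splitting into the three ranges $t<v$, $v\leq t\leq v+\ell-1$, and $t\geq v+\ell$ (subtracting one for the removed value $v$ precisely when $t\leq v$) verifies the inequality in each case, with the slack $\ell\geq 1$ absorbing the last range. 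Hence every unit car parks, so $\x' \in \PA_n(\y)$ and $\x \in \PAINV_n(\y)$.

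I expect the main obstacle to be making the block reduction fully rigorous and, within the block system, correctly handling that a permutation assigns an arbitrary preference \emph{value} to the fat car while the $n-1$ identical unit cars absorb the remaining multiset of preferences in every order; this is exactly the point where a naive argument can go wrong. Once the reduction and the order-independence of classical unit-car parking are in place, the case analysis of the occupancy inequality is routine. Observe finally that condition \eqref{eqn: ACmultiple} coincides verbatim with condition \eqref{eqn: constant}, so this result shows that $\PAINV_n(b,a^{n-1})$ is governed by the same order-statistic description as the constant case $\PAINV_n(a^n)$ whenever $a\mid b$, which is what later powers the enumeration $(n+1)^{n-1}$ in Theorem \ref{mainthm: almostconstant}\ref{mainthm: almostconstantreg}.
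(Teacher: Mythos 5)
Your proposal is correct, and the converse is handled by a genuinely different route from the paper's. The forward direction is identical: Lemmas \ref{lemma: AC1moda} and \ref{lemma: orderstatbound} combine exactly as you say (and indeed neither uses $a\mid b$). For the converse, the paper runs a car-by-car induction: it maintains the invariant that the unoccupied region is a disjoint union of intervals with left endpoints $\equiv 1 \pmod{a}$ and lengths divisible by $a$, and then disposes of three cases for car $k+1$ by a contradiction argument built around the maximal unoccupied spot $S_{\text{max}}$ and the order-statistic bounds $x_{(t)}\leq 1+(t-1)a$. You instead use that same invariant once, to justify rescaling by $a$ --- which is available precisely because $a\mid b$, so the fat car is a whole number $\ell=b/a$ of blocks --- and then exploit that car $1$ always moves first into an empty street, parking at its preferred block $v\leq n$; what remains is classical unit-car parking on $M=\ell+n-1$ blocks with $\ell$ pre-occupied ones, settled by the Hall-type suffix criterion. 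I verified your three-range count: for $t\leq v$ the unit cars with preference $\geq t$ number at most $n-t$ (subtracting the fat car) against $n-t$ free blocks at $t<v$ and $n-v$ at $t=v$; for $v<t\leq v+\ell-1$ you compare $n-t+1$ against $n-v\geq n-t+1$; and for $t\geq v+\ell$ the slack $\ell\geq 1$ gives $\ell+n-t\geq n-t+1$. The two facts you flag as obstacles are real but standard: the block invariant is the easy induction the paper carries out anyway, and the sufficiency and order-independence of the suffix condition for unit cars around obstacles follows from a largest-free-spot argument (if a car with preference $p$ fails, let $s$ be the largest free available spot below $p$; every occupied available spot above $s$ was taken by a car with preference $>s$, so counting at threshold $s+1$ violates the condition). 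Your approach buys modularity and eliminates the per-car case analysis, at the cost of proving that folklore criterion; the one thing it does not buy is uniformity, since the rescaling breaks down when $a\nmid b$, so the paper's heavier induction is still needed for Theorems \ref{theorem: AClarge} and \ref{theorem: ACsmall}, which is presumably why the paper argues as it does even here.
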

\begin{proof}
    Let $\x \in \PAINV_n(\y)$. 
    Combining Lemmas \ref{lemma: AC1moda} and \ref{lemma: orderstatbound}, it follows that $\x$ satisfies (\ref{eqn: ACmultiple}).
    


    We now prove the converse. 
    Assume that (\ref{eqn: ACmultiple}) holds. 
    In other words, for all $i \in [n]$, $x_{(i)}\leq 1+(i-1)a$ and $x_i=1+t_ia$ for some $t_i\in [n-1]_0$. 
    We will show inductively that parking succeeds under $\x$.

    Under the parking experiment for $\x$, the first car, with length $b=ha$ and preference $x_1$, will park in spots $[x_1,x_1+ha-1]$, where $x_1=1+t_1a$ for some $t_1\in [n-1]_0$; that is, $[1+t_1a,(t_1+h)a]$ is occupied by car 1.
    Next, the second car, with length $a$ and preference $x_2$, will attempt to park in $[x_2,x_2+a-1]$, where $x_2=1+t_2a$ for some $t_2\in [n-1]_0$. 
    Note that (\ref{eqn: ACmultiple}) stipulates that at most one entry of $\x$ can be $1+(n-1)a$, so if $t_1=n-1$, then $t_2<t_1$, so that car 2 successfully parks in $[x_2,x_2+a-1]=[1+t_2a,(t_2+1)a]$.
    Now, assume $t_1<n-1$. 
    If $t_1+h\leq n-1$, then we have
    \[ t_2 \in [t_1-1]_0 \cup \{ t_1+h,t_1+h+1,\dots,n-1 \} \quad \text{or} \quad t_2 \in \{ t_1,t_1+1,\dots,t_1+h-1 \}. \] 
    In the former, car 2 successfully parks in $[x_2,x_2+a-1]=[1+t_2a,(t_2+1)a]$.
    In the latter, car 2 fails to park in $[x_2,x_2+a-1]=[1+t_2a,(t_2+1)a]$ since car 1 occupies $[1+t_1a,(t_1+h)a] \supseteq [1+t_2a,(t_2+1)a]$.
    Thus, car 2 must park in $[1+(t_1+h)a,(t_1+h+1)a]$.
    If $t_1+h>n-1$, then we have
    \[ t_2 \in [t_1-1]_0 \quad \text{or} \quad t_2 \in \{ t_1,t_1+1,\dots,n-1 \}. \]
    In the former, car 2 successfully parks in $[x_2,x_2+a-1]=[1+t_2a,(t_2+1)a]$.
    In the latter, car 2 again fails to park in $[x_2,x_2+a-1]$ for the same reason, so it must park in $[1+(t_1+h)a,(t_1+h+1)a]$.
    This proves that car $2$ will occupy $[S_2,S_2+a-1]$, where $S_2\equiv 1 \pmod{a}$, which will serve as our base case.
    
    Assume now that the first $k \in [n-1]$ cars are parked, and each car $j$, where $j \in [k] \setminus \{ 1 \}$, occupies $[S_j,S_j+a-1]$ for some $S_j\equiv 1 \pmod{a}$.
    Moreover, for $S_1=1+t_1a$, car 1 occupies $[S_1,S_1+ha-1]$.
    Thus, all the terms of the sequence $(S_1,S_2,\dots,S_k) \in \N^k$ are distinct, so that its nondecreasing rearrangement $(S_{(1)},S_{(2)},\dots,S_{(k)})$ is strictly increasing.
    Now, for any $j \in [k]$, define 
    \[ \ell(j)\coloneqq \begin{cases}
        ha & \text{if }S_{(j)}=S_1 \\
        a & \text{otherwise}.
    \end{cases} \]
    Then at this stage of the parking experiment, the set of unoccupied parking spots is
    \begin{equation}
    \label{eqn: unoccupied}
        \mathcal{U}\coloneqq [m] \setminus \bigcup_{j=1}^{k}[S_{(j)},S_{(j)}+\ell(j)-1]=\bigcup_{j=0}^{k}\mathcal{U}_j, \quad \text{where} \quad \mathcal{U}_j\coloneqq \begin{cases}
            [1,S_{(1)}-1] & \text{if }j=0 \\
            [S_{(j)}+\ell(j),S_{(j+1)}-1] & \text{if }j \in [k-1] \\
            [S_{(k)}+\ell(k),m] & \text{if }j=k.
        \end{cases}
    \end{equation}
    We note that the $\mathcal{U}_j$ are pairwise disjoint and possibly empty, and there should be a total of $(n+h-1)a-(h+k-1)a=(n-k)a$ unoccupied spots, so that $|\mathcal{U}|\coloneqq \sum_{j=0}^{k}|\mathcal{U}_j|=(n-k)a$.
    In particular, as the inductive hypothesis gives $S_{(j)}\equiv 1 \pmod{a}$, (\ref{eqn: unoccupied}) implies that any $\mathcal{U}_j$ has a left endpoint congruent to $1 \pmod{a}$, and $|\mathcal{U}_j|$ is an integral multiple of $a$.
    
    We now show that the $(k+1)$th car can park. 
    If $x_{k+1}=1$, then (\ref{eqn: unoccupied}) implies that car $k+1$ can find some interval of $a$ unoccupied spots to park in. 
    If $x_{k+1}=1+t_{k+1}a$ for some $t_{k+1} \in [n-1]$, then we have the following three distinct possibilities.
    \begin{case}
    \label{case: ACmultiple1}
        $t_1=n-1$ and $x_{k+1}=1+t_{k+1}a<1+t_1a$.
    \end{case}
    \begin{proof}
        Here, the first car occupies $[1+(n-1)a,(n+h-1)a]$. 
        If spot $1+t_{k+1}a$ is empty, then car $k+1$ will park in $\mathcal{P}\coloneqq [1+t_{k+1}a,(t_{k+1}+1)a]$ by (\ref{eqn: unoccupied}).
        Otherwise, we claim that car $k+1$ must be able to find and park in an interval of $a$ unoccupied spots in $\mathcal{A}\coloneqq [1+(t_{k+1}+1)a,(n-1)a]$. 
        \begin{figure}
            \centering
            \includegraphics[width=13.2cm]{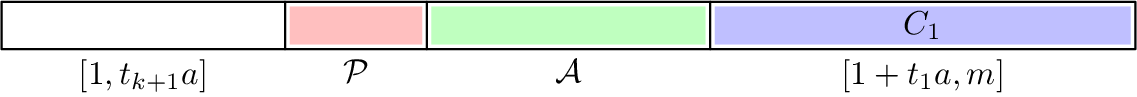}
            \caption{
            Illustration of Case \ref{case: ACmultiple1}.
            Car 1 takes the last $b$ spots in blue. 
            Next, car $k+1$ either parks according to its preference in the red, or it must be able to find an alternative spot in the green, which is located between the red and blue regions.
            }
        \end{figure}
        
        Assume the contrary, so that $\mathcal{A}$ is completely occupied. 
        Let 
        \begin{equation}
        \label{eqn: Smax}
            S_\text{max}=\max\{ s \in [m]:s \text{ is unoccupied when car $k+1$ attempts to park} \}.
        \end{equation}
        Since $S_\text{max}$ is the right endpoint of an interval of unoccupied spots, by (\ref{eqn: unoccupied}), $S_\text{max}=ta$ for some $t\in [t_{k+1}]$. 
        Then $[1+S_{\text{max}},(n+h-1)a]$, where $|[1+S_{\text{max}},(n+h-1)a]|=(n+h-t-1)a$, is occupied by $n-t$ cars, one of which is car 1 (with length $ha$) and $n-t-1$ of which are cars with length $a$. 
        Moreover, by assumption, $[1+S_{\text{max}},(n+h-1)a] \supseteq \mathcal{P}$.
        Thus, including car $k+1$, at least $n-t+1$ cars have preference at least $1+S_{\text{max}}$ since $S_{\text{max}}$ is unoccupied.
        In other words, at most $t-1$ cars have preference strictly less than $1+S_{\text{max}}$, forcing $x_{(t)}\geq 1+S_{\text{max}}=1+ta$.
        This contradicts $x_{(t)}\leq 1+(t-1)a$. 
    \end{proof}
    \begin{case}
    \label{case: ACmultiple2}
        $t_1\neq n-1$ and $x_{k+1}=1+t_{k+1}a<1+t_1a$.
    \end{case}
    \begin{proof}
        Now, the first car no longer occupies spot $m$.
        Note again by (\ref{eqn: unoccupied}) that if spot $1+t_{k+1}a$ is empty, then car $k+1$ will park in $\mathcal{P}\coloneqq [1+t_{k+1}a,(t_{k+1}+1)a]$. 
        Otherwise, car $k+1$ searches for an interval of $a$ unoccupied spots in $\mathcal{A}_1\coloneqq [1+(t_{k+1}+1)a,t_1a]$. 
        If it finds one, then it will occupy it. 
        If it does not, then we claim that it must be able to find and park in such an interval in $\mathcal{A}_2\coloneqq [1+(h+t_1)a,(n+h-1)a]$.
        \begin{figure}
            \centering
            \includegraphics[width=13.2cm]{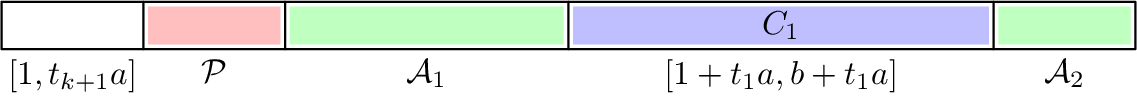}
            \caption{
            Illustration of Case \ref{case: ACmultiple2}.
            Car 1 takes the $b$ spots in blue. 
            Next, car $k+1$ either parks according to its preference in the red, or it must be able to find an alternative spot in one of the green regions, which surround car 1.
            }
        \end{figure}
        
        Assume the contrary, so that $\mathcal{A}_1$ and $\mathcal{A}_2$ are completely occupied. 
        Let $S_{\text{max}}$ be as in (\ref{eqn: Smax}), so that again, $S_{\text{max}}=ta$ for some $t \in [t_{k+1}]$.
        Repeating the argument for Case \ref{case: ACmultiple1} allows us to arrive at the same contradiction. 
        
    \end{proof}
    \begin{case}
    \label{case: ACmultiple3}
        $x_{k+1}=1+t_{k+1}a\geq 1+t_1a$.
    \end{case}
    \begin{proof}
        Note first that $t_1<n-1$ due to (\ref{eqn: ACmultiple}). Here, we claim that car $k+1$ will find and park in an interval of $a$ unoccupied spots in $\mathcal{I}\coloneqq [1+(h+t_1)a,(n+h-1)a]$. 
        \begin{figure}
            \centering
            \includegraphics[width=13.2cm]{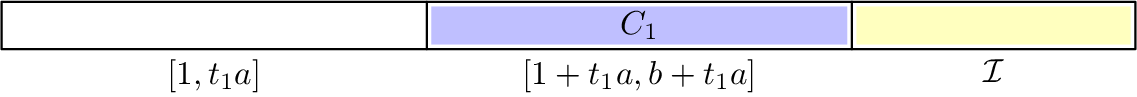}
            \caption{
            Illustration of Case \ref{case: ACmultiple3}.
            Car 1 takes the $b$ spots in blue. 
            Next, car $k+1$ will find parking in the yellow, which is located to the right of car 1.
            }
        \end{figure}
        
        Assume now that car $k+1$ cannot find such an interval, and let $S_{\text{max}}$ be as in (\ref{eqn: Smax}). 
        
        Note that we may take $h+t_1<n-1$.
        Otherwise, $h+t_1\geq n-1$, and since car 1 occupies $[1+t_1a,(h+t_1)a]$, any car with preference at least $1+t_1a$ is forced to park in an interval of $a$ unoccupied spots in $\mathcal{I}$.
        Then our assumption implies that $\mathcal{I}$ is completely occupied when car $k+1$ attempts to park, so that $S_{\text{max}}=ta$ for some $t \in [t_{k+1}]$.
        Repeating the argument for Case \ref{case: ACmultiple1} then completes the proof.
        For similar reasons, we may assume $S_{\text{max}}\geq 1+(h+t_1)a$.
        
        Given these additional assumptions, we have
        \[ S_{\text{max}}=ta \quad \text{for some} \quad t \in \{ h+t_1+1,h+t_1+2,\dots,t_{k+1} \}. \]
        Thus, $[1+S_{\text{max}},(n+h-1)a]$, where $|[1+S_\text{max},(n+h-1)a]|=(n+h-t-1)a$, is occupied by $n+h-t-1$ cars, all of which must have length $a$ since $S_{\text{max}}\geq 1+(h+t_1)a$. 
        Thus, including car $k+1$, at least $n+h-t$ cars have preference at least $1+S_{\text{max}}$ since $S_{\text{max}}$ is unoccupied, so at most $t-h$ cars have preference strictly less than $1+S_\text{max}$.
        Thus, $x_{(t-h+1)}\geq 1+S_\text{max}=1+ta$. 
        Since $h$ is a positive integer, this contradicts $x_{(t-h+1)}\leq 1+(t-h)a$.
    \end{proof}
    This covers all cases and completes the induction. 
    Hence, $\x \in \PA_n(\y)$. 
    Note that (\ref{eqn: ACmultiple}) does not depend on how the entries of $\x$ are permuted, so parking succeeds under any permutation of $\x$. 
    Therefore, if $\x$ satisfies (\ref{eqn: ACmultiple}), then $\x \in \PAINV_n(\y)$, concluding the proof.
\end{proof}
\begin{theorem}
\label{theorem: AClarge}
    Let $\y = (b,a^{n-1}) \in\mathbb{N}^n$, where $n\geq 2$, and $\x = (x_1, x_2, \ldots, x_n) \in [m]^n$.
    If 
    $b>(n-1)a$, then $\x \in \PAINV_n(\y)$ if and only if 
    \begin{equation}
    \label{eqn: AClarge}
         x_{(i)} \in \{ 1+(k-1)a:k\in [i] \} \quad \forall i \in [n].
    \end{equation}
\end{theorem}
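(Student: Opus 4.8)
The forward direction is immediate and identical to the multiple case: if $\x \in \PAINV_n(\y)$, then Lemmas \ref{lemma: AC1moda} and \ref{lemma: orderstatbound} give $x_i \equiv 1 \pmod a$ and $x_{(i)} \leq 1 + (i-1)a$ for all $i$, which together are exactly \eqref{eqn: AClarge}. So the content is the converse, and I would prove it by a single counting argument rather than reproving the three-case induction of Theorem \ref{theorem: ACmultiple}. Assume \eqref{eqn: AClarge} and fix an arbitrary permutation $\x'$ of $\x$; I must show parking succeeds. The crucial structural point is that the length-$b$ car parks first on an empty street, so it occupies $[1 + sa, sa + b]$, where $x_1' = 1 + sa$ and $s \in [n-1]_0$.

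Because $b > (n-1)a$ (so $b \geq 1 + (n-1)a$ as integers), the right endpoint of car $1$ satisfies $sa + b \geq 1 + (n-1)a \geq x_{(n)}$; hence no car prefers a spot strictly to the right of car $1$. This is the one place the hypothesis $b > (n-1)a$ enters, and it is exactly what makes the misalignment of $b$ harmless: every length-$a$ car reaches the region $R \coloneqq [sa + b + 1, m]$ to the right of car $1$ only by overflow, so $R$ is filled contiguously from its left end and never develops a gap of size less than $a$, while the region $[1, sa]$ to the left of car $1$ stays $a$-aligned throughout.

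To finish, I would suppose for contradiction that some car fails to park under $\x'$. Since $R$ fills contiguously, its unoccupied part is always a suffix ending at $m$; thus a failing car forces $R$ to be completely full, so every unoccupied spot lies in the $a$-aligned block $[1, sa]$ and the largest unoccupied spot is $S_{\max} = ta$ for some $t \leq s$. Then $[1 + ta, m]$ is entirely occupied and has room for only the length-$b$ car together with $n-1-t$ length-$a$ cars, i.e. $n - t$ cars in total. On the other hand, car $1$ and the failing car both prefer spots $\geq 1 + ta$ and cannot park to the left of their preferences, and more generally every car with preference $\geq 1 + ta$ must occupy a spot of $[1 + ta, m]$; by \eqref{eqn: AClarge} we have $x_{(t)} \leq 1 + (t-1)a$, so at least $t$ preferences are $\leq 1 + (t-1)a < 1 + ta$, leaving at most $n - t$ cars with preference $\geq 1 + ta$. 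Counting these cars into the $n - t$ available slots of $[1 + ta, m]$ while one of them fails is the desired contradiction. As \eqref{eqn: AClarge} is permutation-invariant, this holds for every $\x'$, giving $\x \in \PAINV_n(\y)$.

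I expect the main obstacle to be the bookkeeping behind the claim that $R$ fills with no sub-$a$ gap and that $S_{\max}$ is therefore an $a$-multiple in $[1, sa]$; this must be argued carefully, for instance by tracking that all parked length-$a$ cars to the left of car $1$ occupy $a$-aligned intervals. It is tempting to instead deduce the theorem from Theorem \ref{theorem: ACmultiple} by replacing $b$ with the next multiple of $a$, but this is false: lengthening car $1$ can erase a sub-$a$ gap and thereby enlarge $\PAINV_n(\y)$ --- precisely the phenomenon separating this regular regime from the irregular regime $b < (n-1)a$ of Theorem \ref{mainthm: almostconstant}\ref{mainthm: almostconstantirreg} --- so the hypothesis $b > (n-1)a$, which kills all preferences to the right of car $1$, is genuinely needed.
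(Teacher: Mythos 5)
Your proposal is correct, and it reorganizes the paper's argument rather than merely reproducing it. The forward direction is identical (Lemmas \ref{lemma: AC1moda} and \ref{lemma: orderstatbound}). For the converse, the paper runs an induction over cars with the invariant that every parked length-$a$ car starts at a spot $\equiv 1 \pmod{a}$ to the left of car $1$ and $\equiv 1+b \pmod{a}$ to its right, and then splits into three cases ($t_1=n-1$ with small preference, $t_1\neq n-1$ with small preference, preference $\geq$ car $1$'s), each ending in the same pigeonhole: the largest unoccupied spot is $S_{\max}=ta$, the interval $[1+ta,m]$ holds exactly $n-t$ cars all with preference at least $1+ta$, and together with the stuck car this forces $x_{(t)}\geq 1+ta$, contradicting $x_{(t)}\leq 1+(t-1)a$. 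You isolate the single use of the hypothesis --- $b>(n-1)a$ gives $sa+b\geq 1+(n-1)a\geq x_{(n)}$, so no preference exceeds car $1$'s right endpoint --- and deduce that the region $R$ right of car $1$ is entered only by overflow and hence fills as a contiguous prefix (a slightly stronger statement than the paper's mod-$a$ congruence invariant on that side). This collapses the paper's three cases into one contradiction in which $S_{\max}$ automatically lands in the $a$-aligned block $[1,sa]$, and your final count is the paper's count in contrapositive form. What each buys: the paper's induction is self-contained and structurally parallel to the proofs of Theorems \ref{theorem: ACmultiple} and \ref{theorem: ACsmall}, while your version is shorter and makes the role of $b>(n-1)a$ transparent; but note that the bookkeeping you flag --- that $[1,sa]$ stays $a$-aligned with gaps of size a multiple of $a$, that $R$'s empty suffix has size a multiple of $a$ (so a failing car forces $R$ full), and that every car parked in $[1+ta,m]$ must have preference $\geq 1+ta$ (by monotonicity of occupation, such a car would otherwise have parked in the still-empty run $[1+(t-1)a,ta]$) --- is exactly the induction the paper carries out, so the work is relocated, not eliminated.

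One small imprecision in your closing remark: in the regime $b>(n-1)a$, replacing $b$ by the next multiple of $a$ does not change the answer at all, since conditions \eqref{eqn: ACmultiple} and \eqref{eqn: AClarge} are literally identical; what is true is that no parking-experiment coupling justifies the substitution, and in the irregular regime $a \nmid b$, $b<(n-1)a$ of Theorem \ref{theorem: ACsmall} the rounding genuinely enlarges $\PAINV_n(\y)$, which is the correct reason the naive reduction fails.
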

\begin{proof}
    Let $\x \in \PAINV_n(\y)$.
    Lemmas \ref{lemma: AC1moda} and \ref{lemma: orderstatbound} again show that $\x$ satisfies (\ref{eqn: AClarge}).
    
    
    We now prove the converse.
    Assume that (\ref{eqn: AClarge}) holds.
    In other words, for all $i \in [n]$, $x_{(i)}\leq 1+(i-1)a$ and $x_i=1+t_ia$ for some $t_i\in [n-1]_0$. 
    We will show inductively that parking succeeds under $\x$.

    Under the parking experiment for $\x$, the first car, with length $b$ and preference $x_1$, will park in $[x_1,x_1+b-1]$, where $x_1=1+t_1a$ for some $t_1\in [n-1]_0$. 
    Thus, $[1+t_1a,b+t_1a]$ is now occupied.
    Next, the second car, with length $a$ and preference $x_2$, will attempt to park in $[x_2,x_2+a-1]$, where $x_2=1+t_2a$ for some $t_2\in [n-1]_0$. 
    Due to (\ref{eqn: AClarge}), if $t_1=n-1$, then $t_2<t_1$, so that car $2$ successfully parks in $[x_2,x_2+a-1]=[1+t_2a,(t_2+1)a]$. 
    If $t_1<n-1$, then we have
    \[ t_2 \in [t_1-1]_0 \quad \text{or} \quad t_2 \in \{ t_1,t_1+1,\dots,n-1 \}. \]
    In the former, car 2 successfully parks in $[x_2,x_2+a-1]=[1+t_2a,(t_2+1)a]$.
    In the latter, car 2 fails to park in $[x_2,x_2+a-1]=[1+t_2a,(t_2+1)a]$ since car 1 occupies $[1+t_1a,b+t_1a]$, where $b+t_1a>(n+t_1-1)a\geq (n-1)a$, so $b+t_1a\geq 1+(n-1)a$ and $b+t_1a\geq 1+t_2a$.
    Thus, car 2 must park in $[1+b+t_1a,b+(t_1+1)a]$. 
    This proves that car 2 will occupy $[S_2,S_2+a-1]$, where 
    \[ S_2\equiv \begin{cases}
        1 \pmod{a} & \text{if }S_2<1+t_1a \\
        1+b \pmod{a} & \text{if }S_2>b+t_1a,
    \end{cases} \]
    which will serve as our base case.
    
    Assume now that the first $k \in [n-1]$ cars are parked, and each car $j$, where $j \in [k] \setminus \{ 1 \}$, occupies $[S_j,S_j+a-1]$ for some
    \[ S_j\equiv \begin{cases}
        1 \pmod{a} & \text{if }S_j<1+t_1a \\
        1+b \pmod{a} & \text{if }S_j>b+t_1a.
    \end{cases} \]
    In other words, $S_j\equiv 1 \pmod{a}$ if it is located to the left of $[1+t_1a,b+t_1a]$ (where car 1 is parked), and $S_j \equiv 1+b \pmod{a}$ otherwise.
    Moreover, for $S_1=1+t_1a$, car 1 occupies $[S_1,S_1+b-1]$.
    Now, for any $j \in [k]$, define
    \begin{equation} 
    \label{eqn: lengthnonmultiple}
        \ell(j)\coloneqq \begin{cases}
            b & \text{if $S_{(j)}=S_1$} \\
            a & \text{otherwise}.
        \end{cases} 
    \end{equation}
    At this stage of the parking experiment, the set of unoccupied spots is again $\bigcup_{j=0}^{k}\mathcal{U}_j$, where the $\mathcal{U}_j$ are defined as in (\ref{eqn: unoccupied}) with $\ell(j)$ as in (\ref{eqn: lengthnonmultiple}).
    Furthermore, from (\ref{eqn: unoccupied}), any $\mathcal{U}_j$ has a left endpoint congruent to either $1\pmod{a}$ or $1+b\pmod{a}$ (when $\mathcal{U}_j$ is located to the left and right, respectively, of where car 1 is parked), and $|\mathcal{U}_j|$ is an integral multiple of $a$.

    We now show that the $(k+1)$th car can park. If $x_{k+1}=1$, then (\ref{eqn: unoccupied}) implies that car $k+1$ can find some interval of $a$ unoccupied spots to park in. 
    If $x_{k+1}=1+t_{k+1}a$ for $t_{k+1}\in [n-1]$, then we have the following three distinct possibilities.
    \begin{case}
    \label{case: AClarge1}
        $t_1=n-1$ and $x_{k+1}=1+t_{k+1}a<1+t_1a$.
    \end{case}
    \begin{proof}
        Here, the first car occupies $[1+(n-1)a,b+(n-1)a]$.
        If spot $1+t_{k+1}a$ is empty, then car $k+1$ will park in $[1+t_{k+1}a,(t_{k+1}+1)a]$ by (\ref{eqn: unoccupied}).
        Otherwise, we claim that car $k+1$ must be able to find and park in an interval of $a$ unoccupied spots in $[1+(t_{k+1}+1)a,(n-1)a]$. 
        Assume not, so that $[1+(t_{k+1}+1)a,(n-1)a]$ is completely occupied. 
        Define $S_{\text{max}}$ as in (\ref{eqn: Smax}).
        Since $S_{\text{max}}$ is the right endpoint of an interval of unoccupied spots located to the left of where car 1 is parked, by (\ref{eqn: unoccupied}), $S_{\text{max}}=ta$ for some $t\in [t_{k+1}]$. 
        Then $[1+S_{\text{max}},b+(n-1)a]$, where $|[1+S_{\text{max}},b+(n-1)a]|=(n-t-1)a+b$, are occupied by $n-t$ cars, one of which is car 1 (with length $b$) and $n-t-1$ of which are cars with length $a$. 
        Thus, including car $k+1$, at least $n-t+1$ cars have preference at least $1+S_{\text{max}}$.
        Hence, at most $t-1$ cars have preference strictly less than $1+S_{\text{max}}$, forcing $x_{(t)}\geq 1+S_{\text{max}}=1+ta$, which contradicts $x_{(t)}\leq 1+(t-1)a$.
    \end{proof}
    \begin{case}
        $t_1\neq n-1$ and $x_{k+1}=1+t_{k+1}a<1+t_1a$.
    \end{case}
    \begin{proof}
        Note again that (\ref{eqn: unoccupied}) implies that if spot $1+t_{k+1}a$ is empty, then car $k+1$ will park in $[1+t_{k+1}a,(t_{k+1}+1)a]$.
        Otherwise, car $k+1$ searches for an interval of $a$ unoccupied spots in $[1+(t_{k+1}+1)a,t_1a]$. 
        If it finds one, then it will occupy it. 
        If it does not, then we claim that it must be able to find and park in such an interval in $[1+b+t_1a,b+(n-1)a]$. 
        Assume not, so that both $[1+(t_{k+1}+1)a,t_1a]$ and $[1+b+t_1a,b+(n-1)a]$ are completely occupied.
        Let $S_{\text{max}}$ be as in (\ref{eqn: Smax}), so that by (\ref{eqn: unoccupied}) again, $S_{\text{max}}=ta$ for some $t\in [t_{k+1}]$. 
        Repeating the argument for Case \ref{case: AClarge1} yields the same contradiction.
    \end{proof}
    \begin{case}
        $x_{k+1}=1+t_{k+1}a\geq 1+t_1a$.
    \end{case}
    \begin{proof}
        Here, we claim that car $k+1$ will be able to find and park in an interval of $a$ unoccupied spots in $[1+b+t_1a,b+(n-1)a]$.
        Assume not, and let $S_{\text{max}}$ be as in (\ref{eqn: Smax}). 
        Note that $b+t_1a\geq 1+(n-1)a$, so any car with preference at least $1+t_1a$ is forced to park in an interval of $a$ unoccupied spots in $[1+b+t_1a,b+(n-1)a]$.
        Then our assumption implies that $[1+b+t_1a,b+(n-1)a]$ is completely occupied when car $k+1$ attempts to park, so that $S_{\text{max}}=ta$ for some $t \in [t_{k+1}]$. Repeating the argument for Case \ref{case: AClarge1} yields another contradiction.
    \end{proof}
    This covers all cases and completes the induction, so $\x \in \PA_n(\y)$.
    Again, (\ref{eqn: AClarge}) is preserved under any permutation of $\x$.
    Therefore, if $\x$ satisfies (\ref{eqn: AClarge}), then $\x \in \PAINV_n(\y)$, concluding the proof.
\end{proof}
From Theorems \ref{theorem: ACmultiple} and \ref{theorem: AClarge}, we see that the conditions (\ref{eqn: constant}), (\ref{eqn: ACmultiple}), and (\ref{eqn: AClarge}) are identical, and hence the equality $\PAINV_n((a^n))=\PAINV_n((b,a^{n-1}))$ holds whenever $a \mid b$ or $b>(n-1)a$.

It remains to study the possibility of $b \nmid a$ and $b<(n-1)a$. 
For this case, we will see that the order statistic bounds provided by Lemma \ref{lemma: orderstatbound} are not strong enough to guarantee that $\x \in \PAINV_n(\y)$.
Thus, sharpening some of these bounds is needed to obtain such a result.
\begin{theorem}
\label{theorem: ACsmall}
    Let $\y = (b,a^{n-1}) \in\mathbb{N}^n$, where $n\geq 2$, and $\x = (x_1, x_2, \ldots, x_n) \in [m]^n$. 
    If $a \nmid b$ and $b<(n-1)a$, then $\x \in \PAINV_n(\y)$ if and only if 
    \begin{equation}
    \label{eqn: ACsmall}
        x_{(i)} \in 
        \begin{cases} 
            \{ 1+(k-1)a:k\in [i] \} & \forall i\in\left[\floor{\frac{b}{a}}+1\right] \\
            \left\{ 1+(k-1)a:k \in \left[ \floor{\frac{b}{a}}+1 \right] \right\} & \text{otherwise}.
        \end{cases}
    \end{equation}
\end{theorem}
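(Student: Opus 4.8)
The plan is to prove both implications, writing $h \coloneqq \floor{b/a}$ and $r \coloneqq b - ha$, so that $b = ha + r$ with $0 < r < a$ (as $a \nmid b$) and $h \le n-2$ (as $b < (n-1)a$, forcing $n \ge h+2$). First I would observe that (\ref{eqn: ACsmall}) is equivalent to: every $x_{(i)} \equiv 1 \pmod a$, with $x_{(i)} \le 1 + (i-1)a$ for $i \le h+1$ and $x_{(i)} \le 1 + ha$ for $i \ge h+2$. Since $1 + (i-1)a \le 1 + ha$ when $i \le h+1$, condition (\ref{eqn: ACsmall}) in fact forces every entry of $\x$ into $\{1, 1+a, \dots, 1+ha\}$, a fact I will lean on heavily.

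For necessity, suppose $\x \in \PAINV_n(\y)$. Lemma \ref{lemma: AC1moda} gives $x_i \equiv 1 \pmod a$, and Lemma \ref{lemma: orderstatbound} gives $x_{(i)} \le 1 + (i-1)a$, which settles $i \le h+1$. It remains to strengthen this to $x_{(i)} \le 1 + ha$ for $i \ge h+2$, which I would argue by contrapositive via a capacity count. If some order statistic of index $\ge h+2$ exceeds $1+ha$, then in particular $x_{(n)} = 1 + qa$ for some $q \ge h+1$. Consider the permutation assigning preference $1$ (which occurs, as $\x \in \PA_n(\y)$) to the length-$b$ car and preference $1+qa$ to a length-$a$ car placed second. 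The length-$b$ car fills $[1, ha+r]$; if $q > n+h-2$ the second car already fails, so assume $q \le n+h-2$, in which case it fills $[1+qa, (q+1)a]$ and acts as a barrier. The remaining $n-2$ length-$a$ cars must then fit into the gaps $[ha+r+1, qa]$ and $[(q+1)a+1, m]$, whose lengths $(q-h)a - r$ and $(n+h-q-2)a + r$ are not multiples of $a$ and so admit at most $q-h-1$ and $n+h-q-2$ disjoint length-$a$ intervals, a total capacity of only $n-3 < n-2$. Hence some car cannot park and $\x \notin \PAINV_n(\y)$, as desired. (The degenerate case $n=2$ forces $b<a$ and $\x=(1,1)$, which I would verify directly.)

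For sufficiency, since (\ref{eqn: ACsmall}) depends only on the multiset of entries of $\x$, it suffices, exactly as in Theorems \ref{theorem: ACmultiple} and \ref{theorem: AClarge}, to show that any $\x$ satisfying (\ref{eqn: ACsmall}) lies in $\PA_n(\y)$ when the length-$b$ car is processed first. That car, with preference $x_1 = 1 + t_1 a$, occupies $[1+t_1 a, (t_1+h)a+r]$, partitioning the street into a left region whose free slots are aligned to the main grid (left endpoints $\equiv 1 \pmod a$) and a right region whose free slots are aligned to the shifted grid (left endpoints $\equiv 1+b \pmod a$). I would maintain inductively, as before, that after each car parks the unoccupied spots form a disjoint union of intervals, each of length a multiple of $a$ and aligned to one of these two grids. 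The crucial new observation is that no length-$a$ car ever straddles the two grids: straddling would require a preference at least $1 + (t_1+h+1)a$, whereas every preference is at most $1 + ha < 1 + (t_1+h+1)a$ by (\ref{eqn: ACsmall}). Thus every car parks in a clean slot — a main-grid slot to the left of the barrier, or, once its target is occupied or pushed past the barrier, the first free shifted slot to the right — which preserves the inductive hypothesis.

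It remains to show each car actually finds such a slot rather than running off the right end, and I would handle this by the same case analysis and maximal-unoccupied-position argument as in Theorem \ref{theorem: ACmultiple}: if the incoming car with preference $1 + t_{k+1}a$ found every slot from its target onward occupied, then letting $S_{\max} = ta$ be the largest unoccupied position at that moment, the interval $[1+S_{\max}, m]$ would be saturated by too many cars, forcing $x_{(t)} \ge 1 + ta$ and contradicting $x_{(t)} \le 1+(t-1)a$. The main obstacle is precisely this final bookkeeping: tracking the interaction of the two grids across the length-$b$ barrier and verifying that the capped bound $x_{(i)} \le 1 + ha$ for $i \ge h+2$ is exactly what forbids straddling into the shifted region, while the sharper bound $x_{(i)} \le 1+(i-1)a$ controls the left region. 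Once the no-straddle property is established, however, these bounds apply essentially verbatim as in the constant and $a \mid b$ cases, so the induction closes.
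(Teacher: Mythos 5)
Your proposal is correct and takes essentially the same route as the paper: necessity via Lemmas \ref{lemma: AC1moda} and \ref{lemma: orderstatbound} plus a bad permutation placing the oversized preference immediately after the length-$b$ car (your two-gap capacity count is an equivalent variant of the paper's single-gap divisibility argument), and sufficiency via the same car-by-car induction with the two-grid alignment invariant, the no-straddle observation from the cap $x_{(i)}\leq 1+\floor{b/a}a$, and an $S_{\text{max}}$ order-statistic contradiction. If anything, your unified treatment with $S_{\text{max}}=ta$ on the main grid is cleaner than the paper's final case, which locates $S_{\text{max}}=ta+b$ to the right of car 1 even though, precisely because the cap keeps every preference at or before car 1's block, failure of a right-pointing car forces the right region to be completely occupied and $S_{\text{max}}$ to lie on the main grid.
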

\begin{proof}
    First, assume that $\x$ does not satisfy (\ref{eqn: ACsmall}). 
    That is, there exists $j \in [n]$ for which at least one of the following is true:
    \begin{align*} 
        x_{(j)}&\not\equiv 1 \pmod{a} \\
        x_{(j)}&>1+(j-1)a \text{ if }j\leq \floor{\frac{b}{a}}+1 \\
        x_{(j)}&>1+\floor{\frac{b}{a}} a \text{ if }j>\floor{\frac{b}{a}}+1. 
    \end{align*}
    Lemma \ref{lemma: AC1moda} allows us to assume that any entry of $\x$ is congruent to $1 \pmod{a}$.
    Similarly, Lemma \ref{lemma: orderstatbound} allows us to assume that $x_{(j)}\leq 1+(j-1)a$ if $j\leq \floor{\frac{b}{a}}+1$.

    If $x_{(j)}>1+\floor{\frac{b}{a}}a$, where $j>\floor{\frac{b}{a}}+1$, then write $x_{(j)}=1+qa$, where $q\geq \floor{\frac{b}{a}}+1$. 
    Let $\x'=(x_1',x_2',\dots,x_n')$ be any permutation of $\x$ such that $x_1'=x_{(1)}$ and $x_2'=x_{(j)}$.
    Under the parking experiment for $\x'$, the first car will occupy the $b$ spots $[1,b]$, and the second car will occupy the $a$ spots $[x_{(j)},x_{(j)}+a-1]=[1+qa,(q+1)a]$.
    This leaves the $qa-b$ spots $[b+1,qa]$ empty. 
    Note that as $a \nmid b$, we have $\frac{b}{a}-1<\floor{\frac{b}{a}}<\frac{b}{a}$, so
    \[ qa-b\geq \left( \floor{\frac{b}{a}}+1 \right)a-b>\frac{ab}{a}-b=0; \]
    thus, $[b+1,qa]\neq \emptyset$.
    Similarly, $\floor{\frac{b}{a}}<\frac{b}{a}<\floor{\frac{b}{a}}+1$, which yields
    \[ \left(q-\floor{\frac{b}{a}}-1\right)a<qa-b<\left(q-\floor{\frac{b}{a}}\right)a, \]
    so $qa-b$ is strictly bounded between consecutive multiples of $a$. 
    Hence, none of the remaining cars, which all have length $a$, can fill $[b+1,qa]$. 
    Thus, we have $\x' \in \PA_n(\y)$ and $\x \notin \PAINV_n(\y)$.

    Thus, if $\x \in \PAINV_n(\y)$, then $\x$ satisfies (\ref{eqn: ACsmall}). 
    
    We now prove the converse. 
    Assume that (\ref{eqn: ACsmall}) holds. 
    In other words, for all $i\in \left[\floor{\frac{b}{a}}+1\right]$, $x_{(i)}\leq 1+(i-1)a$, for all $\floor{\frac{b}{a}}+1<i\leq n$, $x_{(i)}\leq 1+\floor{\frac{b}{a}}a$, and $x_i=1+t_ia$ for some $t_i\in \left[\floor{\frac{b}{a}}\right]_0$. 
    We will show inductively that parking succeeds under $\x$.
    
    Under the parking experiment for $\x$, the first car, with length $b$ and preference $x_1$, will park in $[x_1,x_1+b-1]$, where $x_1=1+t_1a$ for some $t_1\in \left[\floor{\frac{b}{a}}\right]_0$. 
    Then, the second car, with length $a$ and preference $x_2$, will attempt to park in $[x_2,x_2+a-1]$, where $x_2=1+t_2a$ for some $t_2\in \left[ \floor{\frac{b}{a}} \right]_0$.
    We have 
    \[ t_2 \in [t_1-1]_0 \quad \text{or} \quad t_2 \in \left\{ t_1,t_1+1,\dots,\floor{\frac{b}{a}} \right\}. \]
    In the former, car 2 successfully parks in $[x_2,x_2+a-1]=[1+t_2a,(t_2+1)a]$.
    In the latter, car 2 fails to park in $[x_2,x_2+a-1]=[1+t_2a,(t_2+1)a]$ since car 1 occupies $[1+t_1a,b+t_1a]$, where $b+t_1a\geq b>\floor{\frac{b}{a}}a$, so $b+t_1a\geq 1+\floor{\frac{b}{a}}a$ and $b+t_1a\geq 1+t_2a$.
    Thus, car 2 must park in $[1+b+t_1a,b+(t_1+1)a]$.
    This proves that car 2 will occupy $[S_2,S_2+a-1]$, where
    \[ S_2 \equiv \begin{cases}
        1 \pmod{a} & \text{if }S_2<1+t_1a \\
        1+b \pmod{a} & \text{if }S_2>b+t_1a,
    \end{cases} \]
    which will serve as our base case.
    Assume now that the first $k \in [n-1]$ cars are parked, and each car $j$, where $j \in [k] \setminus \{ 1 \}$, occupies $[S_j,S_j+a-1]$ for some
    \[ S_j \equiv \begin{cases}
        1 \pmod{a} & \text{if }S_j<1+t_1a \\
        1+b \pmod{a} & \text{if }S_j>b+t_1a.
    \end{cases} \]
    Moreover, for $S_1=1+t_1a$, car 1 occupies $[S_1,S_1+b-1]$.
    At this stage of the parking experiment, the set of unoccupied spots is again $\bigcup_{j=0}^{k}\mathcal{U}_j$, where the $\mathcal{U}_j$ are defined as in (\ref{eqn: unoccupied}) with $\ell(j)$ as in (\ref{eqn: lengthnonmultiple}).
    Furthermore, from (\ref{eqn: unoccupied}), any $\mathcal{U}_j$ has a left endpoint congruent to either $1\pmod{a}$ or $1+b\pmod{a}$, and $|\mathcal{U}_j|$ is an integral multiple of $a$.

    We now show that the $(k+1)$th car can park. If $x_{k+1}=1$, then (\ref{eqn: unoccupied}) implies that car $k+1$ can find some interval of $a$ unoccupied spots to park in. 
    If $x_{k+1}=1+t_{k+1}a$ for $t_{k+1}\geq 1$, then we have the following two distinct possibilities.
    \begin{case}
        $x_{k+1}=1+t_{k+1}a<1+t_1a$. 
    \end{case}
    \begin{proof}
        Note first that car 1 cannot occupy $[1+(n-1)a,b+(n-1)a]$ since $x_1\leq 1+\floor{\frac{b}{a}}a<1+(n-1)a$.
        In particular, as $\floor{\frac{b}{a}}\leq n-2$, we have $|[x_1+b,b+(n-1)a]|=(n-1)a-\floor{\frac{b}{a}}a\geq a$; in other words, the first car will park at least $a$ spots away from the end of the parking lot.

        Now, note that if spot $1+t_{k+1}a$ is empty, then car $k+1$ will park in $[1+t_{k+1}a,(t_{k+1}+1)a]$ by (\ref{eqn: unoccupied}).
        Otherwise, it searches for an interval of $a$ unoccupied spots in $[1+(t_{k+1}+1)a,t_1a]$.
        If it finds one, then it will occupy it.
        If it does not, then we claim it must be able to find and park in such an interval in $[1+t_1a+b,b+(n-1)a]$. 
        
        Assume not, so that all of these spots are occupied. 
        Let $S_{\text{max}}$ be as in (\ref{eqn: Smax}).
        Since it is the right endpoint of an interval of unoccupied spots located to the left of where car 1 is parked, by (\ref{eqn: unoccupied}), $S_{\text{max}}=ta$ for some $t\in [t_{k+1}]$. 
        Repeating the argument for Case \ref{case: AClarge1} yields the same contradiction.
    \end{proof}
    \begin{case} 
        $x_{k+1}=1+t_{k+1}a\geq 1+t_1a$.
    \end{case}
    \begin{proof}
        Here, we claim that car $k+1$ will be able to find and park in an interval of $a$ unoccupied spots in $[1+t_{k+1}a,b+(n-1)a]$. 
        Assume not, and let $S_{\text{max}}$ be as in (\ref{eqn: Smax}). 
        Since it is the right endpoint of an interval of unoccupied spots located to the right of where car 1 is parked, by (\ref{eqn: unoccupied}), $S_{\text{max}}=ta+b$ for some $t_1<t< t_{k+1}$. 
        We have
        \[ \left(t+\floor{\frac{b}{a}}\right)a<S_{\text{max}}=ta+b<\left(t+\floor{\frac{b}{a}}+1\right)a. \]
        Thus, 
        as $S_{\text{max}}$ is unoccupied, there are cars that have preference at least $1+\left(t+\floor{\frac{b}{a}}+1\right)a>S_{\text{max}}$. 
        But by (\ref{eqn: ACsmall}), no car can have preference greater than $1+\floor{\frac{b}{a}}a$, and $1+\left(t+\floor{\frac{b}{a}}+1\right)a>1+\floor{\frac{b}{a}}a$, so we have a contradiction. 
    \end{proof}
    This covers all cases and completes the induction, so $\x \in \PA_n(\y)$. 
    Again, (\ref{eqn: ACsmall}) is preserved under any permutation of $\x$.
    Therefore, if $\x$ satisfies (\ref{eqn: ACsmall}), then $\x \in \PAINV_n(\y)$, concluding the proof.
\end{proof}
Theorems \ref{theorem: ACmultiple}, \ref{theorem: AClarge}, and \ref{theorem: ACsmall} illustrate that for almost constant $\y$, $\chi(\y)$ is maximal when $y_1\geq y_2$ and minimal otherwise.
We now prove a surprising partial converse: the almost constant condition on length vectors is necessary for their characteristic to be maximal.
Recall the following results.
\begin{lemma}[Lemma 2.3 in ~\cite{icermpaper2023inv}]
\label{lemma: removal}
    Let $\y \in \N^n$ and $\x=(x_1,x_2,\dots,x_n) \in \PAINV_n(\y)$. 
    If $k=\argmax_{i \in [n]}x_i$, then $\x_{\widehat{k}} \in \PAINV_{n-1}(\y_{\vert_{n-1}})$. 
    In particular, if $\chi(\y)=\alpha$, then $\chi(\y_{\vert_{n-1}})\geq \alpha-1$.
\end{lemma}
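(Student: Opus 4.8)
The plan is to fix an arbitrary rearrangement $\x'$ of $\x_{\widehat{k}}$ and show $\x' \in \PA_{n-1}(\y_{\vert_{n-1}})$; since $\PAINV_{n-1}(\y_{\vert_{n-1}})$ is precisely the set of assortments all of whose rearrangements park, this yields $\x_{\widehat{k}} \in \PAINV_{n-1}(\y_{\vert_{n-1}})$. Write $w \coloneqq x_k = \max_i x_i$ and $m' \coloneqq \sum_{i=1}^{n-1} y_i = m - y_n$. Given $\x'$, form $\x'' \coloneqq (\x', w) \in \N^n$; as $\x''$ is a rearrangement of $\x$ and $\x \in \PAINV_n(\y)$, we have $\x'' \in \PA_n(\y)$. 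In the experiment for $\x''$ the last car (length $y_n$, preference $w$) parks after cars $1,\dots,n-1$, so those first $n-1$ cars execute exactly the experiment for $\x'$ on $\y_{\vert_{n-1}}$, only on the longer street $[1,m]$. The first step is the observation that $\x' \in \PA_{n-1}(\y_{\vert_{n-1}})$ if and only if, in the experiment for $\x''$, cars $1,\dots,n-1$ occupy exactly $[1,m']$: if they stay within $[1,m']$ the two experiments coincide, whereas if some car among the first $n-1$ uses a spot past $m'$, then (the greedy choices agreeing up to that point) that car has no admissible block inside $[1,m']$ and $\x'$ fails. Thus the lemma reduces to a \emph{no-spill} claim: in the experiment for $\x''$ the first $n-1$ cars never use a spot beyond $m'$.

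Next I would record the structure forced by $\x'' \in \PA_n(\y)$. Because the street is filled exactly, after cars $1,\dots,n-1$ park the $y_n$ free spots form a single contiguous block $B = [g,g+y_n-1]$ that car $n$ fills; for car $n$ (preference $w$) to reach $B$ we need $g \ge w$, and since $w = \max_i x_i$ every preference is $\le w \le g$. The tidy case is the nondecreasing rearrangement: applying the discussion to the nondecreasing rearrangement $\x^{*}$ of $\x$ (which is in $\PA_n(\y)$ by invariance and places $w$ last), Lemma \ref{lemma: nondecreasing} shows a nondecreasing preference list fills the street in initial segments, so the first $n-1$ cars of $\x^{*}$ fill exactly $[1,m']$. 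As the first $n-1$ entries of $\x^{*}$ are the nondecreasing rearrangement of $\x_{\widehat{k}}$, this already gives that the sorted $\x_{\widehat{k}}$ lies in $\PA_{n-1}(\y_{\vert_{n-1}})$, along with the order-statistic bounds $x_{(j)} \le 1 + \sum_{i=1}^{j-1} y_i$ for $j \le n-1$.

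The main obstacle is upgrading this from the sorted rearrangement to every rearrangement $\x'$, i.e.\ proving the no-spill claim in full. Here I would argue by contradiction using the full strength of invariance. Suppose the free block $B$ sat at $g \le m'$; then the spots $[g+y_n,m]$ would be occupied by cars among $1,\dots,n-1$, each preferring a spot $\le w \le g$ yet parked to the right of $B$. A short argument shows every such car must have length exceeding $y_n$: since $B$ is free throughout the experiment, any car of length $\le y_n$ preferring $\le g$ would have fit the leftmost part of $B$ and parked there rather than further right. One then shows this configuration is untenable: reassigning preferences among the positions, subject to the constraint that the length-$y_n$ gap left by the first cars must remain fillable, produces a rearrangement of $\x$ whose final (length-$y_n$) car confronts a fragmented or unreachable gap, contradicting $\x \in \PAINV_n(\y)$. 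I expect this contiguity bookkeeping, and in particular controlling the cars longer than $y_n$, to be the delicate part; note that the maximality of $w$ is exactly what prevents the \emph{last} car from ever being the one to fail, forcing any failure onto an earlier car and hence onto a genuine rearrangement of $\x$.

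Finally, the ``in particular'' clause follows by bookkeeping. Given $\chi(\y) = \alpha$ with $\alpha \ge 1$, choose $\z \in \PAINV_n(\y)$ with $\deg \z = \alpha$; its maximal entry is then $\neq 1$, so deleting it lowers the degree by exactly one, and the first part gives $\z_{\widehat{k}} \in \PAINV_{n-1}(\y_{\vert_{n-1}})$ with $\deg \z_{\widehat{k}} = \alpha - 1$, whence $\chi(\y_{\vert_{n-1}}) \ge \alpha - 1$; the case $\alpha = 0$ is trivial.
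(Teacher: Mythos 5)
Your reduction is sound and cleanly argued: the equivalence between $\x' \in \PA_{n-1}(\y_{\vert_{n-1}})$ and the no-spill property for $\x''=(\x',w)$, the observation that the terminal free block $B=[g,g+y_n-1]$ must satisfy $g\geq w$ (so every preference is at most $g$), the fact that any of the first $n-1$ cars parked to the right of $B$ must have length exceeding $y_n$, and the ``in particular'' bookkeeping at the end are all correct. Note, for context, that the paper never proves this statement itself --- it imports it verbatim as Lemma 2.3 of \cite{icermpaper2023inv} --- so your attempt has to stand entirely on its own.

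It does not, because the heart of the lemma is exactly the step you leave as a promissory note. Having reduced everything to ruling out $g\leq m'$, you assert that ``reassigning preferences among the positions \dots produces a rearrangement of $\x$ whose final (length-$y_n$) car confronts a fragmented or unreachable gap,'' without exhibiting such a rearrangement or proving that it fails; you even flag this as the part you ``expect'' to be delicate. This is not a deferrable verification: a rearrangement permutes the preference multiset while car $i$ always keeps length $y_i$, so moving $w$ or any other preference to a new position changes the entire dynamics, and nothing in your sketch identifies which permutation to use or why its terminal gap must fragment. Indeed, your own (correct) observation that every car beyond $B$ is strictly longer than $y_n$ shows the hypothetical spill configuration is consistent at the level of lengths, positions, and preference bounds, so no counting argument can refute it --- the contradiction must come from a genuinely new dynamical argument, which is the actual content of the lemma. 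The sorted case you do establish via Lemma \ref{lemma: nondecreasing} yields only that the nondecreasing rearrangement of $\x_{\widehat{k}}$ parks, which is strictly weaker than $\x_{\widehat{k}} \in \PAINV_{n-1}(\y_{\vert_{n-1}})$ (the paper's example $\y=(1,2,2)$ with $(1,1,2) \in \PA_3(\y)$ but $(2,1,1) \notin \PA_3(\y)$ shows that a sorted tuple parking does not imply invariance). As written, you have proved the easy direction, the sorted special case, and the corollary, but the central no-spill claim for arbitrary rearrangements --- equivalently, the lemma itself --- remains unproven.
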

\begin{lemma}[Theorem 5.1 in ~\cite{icermpaper2023inv}]
\label{lemma: twocarschar}
    Let $\y=(y_1,y_2) \in \N^2$.
    Then $\chi(\y)=1$ if and only if $y_1\geq y_2$.
\end{lemma}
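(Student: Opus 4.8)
The plan is to reduce everything to the explicit two-car parking experiment. Since $n=2$, we have $\chi(\y)\in\{0,1\}$, so $\chi(\y)=1$ if and only if $\PAINV_2(\y)$ contains an element of degree $1$, that is, an assortment of the form $(1,w)$ with $w>1$. By the definition of invariance this is equivalent to requiring that both $(1,w)$ and $(w,1)$ lie in $\PA_2(\y)$, and I would prove the lemma by characterizing exactly which $w$ achieve this. Throughout write $m=y_1+y_2$.

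First I would record two elementary parking facts. \emph{(i)} The assortment $(1,w)$ lies in $\PA_2(\y)$ if and only if $1\le w\le y_1+1$: car $1$ occupies $[1,y_1]$, after which car $2$ (of length $y_2$) parks successfully precisely when its preference $w$ does not force its block $[w,w+y_2-1]$ past spot $m$, i.e.\ when $w\le y_1+1$ (for $w\le y_1$ it simply slides to the free block $[y_1+1,m]$). \emph{(ii)} For $w>1$, the assortment $(w,1)$ lies in $\PA_2(\y)$ if and only if $w=y_2+1$. Indeed car $1$ can park at all only if $w\le y_2+1$; if $2\le w\le y_2$ then car $1$ occupies $[w,w+y_1-1]$, which straddles the head of the lot and leaves only the two free intervals $[1,w-1]$ and $[w+y_1,m]$, of lengths $w-1<y_2$ and $y_2-w+1<y_2$ respectively, so the length-$y_2$ car $2$ cannot park; only $w=y_2+1$ (placing car $1$ at the far-right block $[y_2+1,m]$) keeps $[1,y_2]$ free for car $2$.

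With \emph{(i)} and \emph{(ii)} in hand the equivalence is immediate. For the reverse implication, if $y_1\ge y_2$ set $w=y_2+1>1$; then \emph{(ii)} gives $(w,1)\in\PA_2(\y)$, and since $y_2+1\le y_1+1$, \emph{(i)} gives $(1,w)\in\PA_2(\y)$, so $(1,y_2+1)\in\PAINV_2(\y)$ has degree $1$ and $\chi(\y)=1$. For the forward implication, any degree-$1$ element of $\PAINV_2(\y)$ has the form $(1,w)$ with $w>1$ and forces both $(1,w),(w,1)\in\PA_2(\y)$; then \emph{(ii)} yields $w=y_2+1$ while \emph{(i)} yields $w\le y_1+1$, whence $y_2+1\le y_1+1$, i.e.\ $y_1\ge y_2$.

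The only delicate step is fact \emph{(ii)}, and specifically the claim that no $w$ with $2\le w\le y_2$ produces a parking assortment; this is where the ``next contiguous block'' rule must be tracked carefully, since car $1$ partially overlaps car $2$'s preferred region. I would settle it by the explicit length count of the two residual free intervals given above, both of which fall strictly short of $y_2$ whenever $w\ge 2$.
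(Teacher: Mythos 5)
Your proposal is correct, but there is nothing in the paper to compare it against: the paper imports this statement wholesale as Theorem 5.1 of \cite{icermpaper2023inv} and gives no proof, so your argument is a self-contained reconstruction rather than a variant of an internal one. The reduction is sound: since $n=2$ forces $\chi(\y)\in\{0,1\}$, and invariance makes $(1,w)\in\PAINV_2(\y)$ equivalent to $(1,w),(w,1)\in\PA_2(\y)$, the lemma does come down to your facts \emph{(i)} and \emph{(ii)}, and both are verified correctly --- in particular the delicate count in \emph{(ii)}, where for $2\le w\le y_2$ car $1$'s block $[w,w+y_1-1]$ splits the free space into intervals of lengths $w-1<y_2$ and $y_2-w+1<y_2$, neither of which can hold car $2$. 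Two small economies are available: fact \emph{(i)} is exactly the $n=2$ case of Lemma \ref{lemma: nondecreasing} (Proposition 2.2 of \cite{icermpaper2023inv}), which the paper does quote, so you could cite it instead of rederiving it; and in the forward implication a degree-$1$ element of $\PAINV_2(\y)$ could a priori be $(w,1)$ rather than $(1,w)$, though this is immaterial since invariance puts both permutations in $\PA_2(\y)$ either way --- worth one clarifying clause.
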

\begin{lemma}[Theorem 5.4 in ~\cite{icermpaper2023inv}]
\label{lemma: charmaximpliesACbasecase}
    Let $\y=(y_1,y_2,y_3) \in \N^3$.
    If $\chi(\y)=2$, then $y_2=y_3$.
\end{lemma}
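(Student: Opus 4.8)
The plan is to extract $y_2=y_3$ from a single structural observation about degree-$2$ witnesses, applied to two carefully chosen rearrangements. Assume $\chi(\y)=2$ and fix $\x\in\PAINV_3(\y)$ with $\deg\x=2$; since $\PAINV_3(\y)$ is closed under permutation, I may take the entries of $\x$ to be $1,p,q$ with $1<p\le q$, where the value $1$ occurs exactly once. The two rearrangements I would feed into the parking experiment are $(p,1,q)$ and $(p,q,1)$: in both I hand the preference $p$ to car $1$ (length $y_1$), and I move the unique preference $1$ between car $2$ (length $y_2$) and car $3$ (length $y_3$). Both are permutations of the entries of $\x$, hence lie in $\PA_3(\y)$, so in each the parking experiment succeeds.

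The key observation is a length-forcing statement. In either rearrangement car $1$ parks in $[p,p+y_1-1]$, leaving the initial block $[1,p-1]$ (nonempty, since $p\ge 2$) to be filled later. Every remaining preference in $\x$ is at least $p>p-1$, so every car other than the preference-$1$ car parks at a spot $\ge p$ and can never occupy any spot of $[1,p-1]$. Because all three cars park and their lengths sum to $m=y_1+y_2+y_3$, every spot is ultimately filled, so $[1,p-1]$ must be filled entirely by the single preference-$1$ car. That car enters at spot $1$ and occupies a contiguous block there, so it fills exactly $[1,p-1]$ precisely when its length equals $p-1$: a shorter car fills only $[1,\lambda]$ and leaves the unfillable sub-gap $[\lambda+1,p-1]$, while a longer car cannot fit before $[p,p+y_1-1]$ and is pushed past car $1$, again leaving $[1,p-1]$ empty. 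In either case some spot of $[1,p-1]$ would remain unoccupied even though all three cars have parked, which is impossible.

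Applying this to $(p,1,q)$, where car $2$ carries the preference $1$, forces $y_2=p-1$; applying it to $(p,q,1)$, where car $3$ carries the preference $1$, forces $y_3=p-1$. Hence $y_2=y_3$, as claimed. (One could separately recover $y_1\ge y_2$ from Lemmas~\ref{lemma: removal} and~\ref{lemma: twocarschar}, applying the former to delete the largest entry of $\x$ and the latter to the resulting element of $\PAINV_2(\y_{\vert_2})$, but it is not needed for this conclusion.) The only delicate point, and the step I would write out most carefully, is the length-forcing observation, namely the bookkeeping showing that any length other than $p-1$ for the preference-$1$ car strands a spot; once that interval argument is airtight, the deduction is immediate, and I expect no genuine obstacle beyond it.
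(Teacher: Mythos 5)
Your proof is correct, and it is a genuinely different route from what the paper does: the paper does not prove this statement at all, importing it as Theorem~5.4 of \cite{icermpaper2023inv} and using it as a base case for Theorem~\ref{theorem: charmaximpliesAC}, with only a remark that the contrapositive strategy of that theorem's inductive step (run the permutation $(\mathbf{w}',1)$, show the trailing preference-$1$ car strands the gap unless the lengths agree, after first invoking Lemma~\ref{lemma: removal} and induction on the prefix) would yield an alternate proof. Your argument is close in spirit to that sketched strategy but is direct rather than contrapositive and entirely self-contained for $n=3$: you never need Lemma~\ref{lemma: removal} or any inductive structure, and by feeding the two rearrangements $(p,1,q)$ and $(p,q,1)$ into one length-forcing observation you pin down the common value $y_2=y_3=p-1$, where $p$ is the \emph{smaller} non-$1$ entry --- a choice that matters, since giving car $1$ the preference $p=\min(p,q)$ guarantees every other non-$1$ preference is $\geq p$ and hence can never touch $[1,p-1]$, which is exactly what makes the gap argument airtight. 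The one step worth writing out fully is the claim that a preference-$1$ car of length $\lambda>p-1$ is pushed past car $1$: there the contradiction splits into two sub-cases (the car fails to park, contradicting $\x'\in\PA_3(\y)$ directly, or it parks beyond $[p,p+y_1-1]$ and a spot of $[1,p-1]$ stays empty while the lengths sum to $m$), and your write-up conflates them slightly; but both branches terminate correctly, so this is a matter of exposition, not a gap. What your approach buys is a short, symmetric, standalone proof of the $n=3$ case; what the paper's cited/contrapositive route buys is uniformity, since the same $(\mathbf{w}',1)$ argument scales to all $n$ in Theorem~\ref{theorem: charmaximpliesAC}, whereas your two-permutation trick exploits that exactly one entry equals $1$ when $n=3$ and $\deg\x=2$, and would need the inductive scaffolding to generalize.
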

Now, we will present the partial converse, which implies one direction of Theorem \ref{mainthm: maxchar}.
Its proof will be via induction on $n$, so the above results will be instrumental in setting up the base cases and proving the inductive step.
\begin{theorem}
\label{theorem: charmaximpliesAC}
    Let $\y=(y_1,y_2,\dots,y_n)\in \mathbb{N}^n$, where $n\geq 2$.
    If $\chi(\y)=n-1$, then 
    \[ y_2=y_3=\cdots=y_n. \]
\end{theorem}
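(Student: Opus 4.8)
The plan is to induct on $n$. The statement is vacuous for $n=2$, and for $n=3$ it is precisely Lemma \ref{lemma: charmaximpliesACbasecase}, which anchors the induction. For the inductive step, fix a degree-$(n-1)$ invariant parking assortment $\x \in \PAINV_n(\y)$, which exists exactly because $\chi(\y)=n-1$; thus exactly one entry of $\x$ equals $1$ and the other $n-1$ entries exceed $1$. Applying Lemma \ref{lemma: removal} to $\x$ deletes its largest entry and yields an element of $\PAINV_{n-1}(\y_{\vert_{n-1}})$ of degree $n-2$, so $\chi(\y_{\vert_{n-1}})\geq n-2$; since a length-$(n-1)$ vector has characteristic at most $n-2$, equality holds, and the inductive hypothesis gives $y_2=y_3=\cdots=y_{n-1}=:a$. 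The difficulty is that Lemma \ref{lemma: removal} only ever reduces to a \emph{prefix}, so it is blind to the final length $y_n$; the heart of the argument is a direct parking analysis that recovers the information about $y_n$ discarded by this reduction.

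For that analysis I would exploit the freedom, granted by invariance, to place the unique preference $1$ on any car. First I claim every non-$1$ entry $p$ of $\x$ satisfies $p\geq y_n+1$: put $1$ on car $n$ (length $y_n$) and $p$ on car $1$ (length $y_1$). Since no car other than car $n$ prefers spot $1$, that spot can be occupied only by car $n$; as parking succeeds and fills all $m$ spots, car $n$ must occupy spot $1$, hence park in $[1,y_n]$, forcing the first $n-1$ cars to avoid $[1,y_n]$. In particular car $1$, which parks first on an empty street in $[p,p+y_1-1]$, must satisfy $p\geq y_n+1$; ranging over all non-$1$ values via invariance proves the claim. Next I would place $1$ on one of the length-$a$ cars (one exists since $n\geq 3$): every other car then has preference $\geq y_n+1$ and so parks entirely within $[y_n+1,m]$, leaving the length-$a$ car as the only one able to occupy $[1,y_n]$; since it covers at most $a$ spots and $[1,y_n]$ must be filled, this gives $y_n\leq a$.

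Finally I would show $a\mid y_n$, which together with $1\leq y_n\leq a$ forces $y_n=a$ and completes the induction. Putting $1$ on car $n$ once more and shifting coordinates by $y_n$, the first $n-1$ cars realize an \emph{invariant} parking assortment for the almost-constant vector $(y_1,a^{n-2})$, so by Lemma \ref{lemma: AC1moda} every non-$1$ entry $p$ of $\x$ is congruent to $1+y_n \pmod a$; on the other hand, the prefix assortment produced by Lemma \ref{lemma: removal} is itself invariant for $(y_1,a^{n-2})$, so Lemma \ref{lemma: AC1moda} forces its $n-2$ non-$1$ entries to be congruent to $1\pmod a$. Comparing the two congruences on any such entry yields $y_n\equiv 0\pmod a$. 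The main obstacle, as flagged, is exactly that the only available structural reduction is blind to the last coordinate; the three invariance-driven parking arguments above are engineered to recover precisely that missing information, and the cleanest observation underlying all of them — that only the preference-$1$ car can ever occupy spot $1$ — is what makes both the bound $y_n\leq a$ and the congruence $a\mid y_n$ fall out.
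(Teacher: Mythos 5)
Your proof is correct, and while it shares the paper's inductive skeleton --- the same base cases ($n=2$ vacuously via Lemma \ref{lemma: twocarschar}, $n=3$ via Lemma \ref{lemma: charmaximpliesACbasecase}) and the same reduction through Lemma \ref{lemma: removal} plus the inductive hypothesis to get $y_2=\cdots=y_{n-1}=a$ --- the mechanism by which you pin down $y_n$ is genuinely different. The paper argues by contrapositive on a single permutation: writing the length vector as $(b,a^{k-1},c)$, it runs the experiment for $(\mathbf{w}',1)$ with $\mathbf{w}'$ nondecreasing, uses the removal lemma to see that the first $k-1$ cars fill exactly $[1+a,m-c]$, and checks that in both cases $a<c$ and $a>c$ some car strands, forcing $c=a$. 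You instead extract two independent constraints directly from invariance, both resting on the observation that only the preference-$1$ car can ever occupy spot $1$ (a car never parks before its preference, and all $m$ spots get filled): placing the $1$ on car $n$ shows every non-$1$ entry is at least $y_n+1$; placing it on a length-$a$ car then shows that car alone must cover $[1,y_n]$, giving $y_n\le a$; and placing it on car $n$ again shows cars $1,\dots,n-1$ exactly fill $[y_n+1,m]$ in every rearrangement, so the shifted preferences form an element of $\PAINV_{n-1}((y_1,a^{n-2}))$, whence Lemma \ref{lemma: AC1moda} applied there (giving $w_i\equiv 1+y_n \pmod{a}$) and to the prefix assortment from Lemma \ref{lemma: removal} (giving $w_j\equiv 1 \pmod{a}$) forces $a\mid y_n$ and hence $y_n=a$. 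I checked the two non-obvious points --- that the shifted family really is invariant for $(y_1,a^{n-2})$ (each rearrangement with the $1$ on car $n$ is a permutation of $\x$, and the spot-$1$ observation forces car $n$ into $[1,y_n]$ every time, so the suffix dynamics are conjugate to a genuine parking experiment of length $m-y_n$), and that at least one non-$1$ entry survives into the prefix assortment so the two congruences can be compared ($n-2\ge 1$ throughout the inductive step) --- and both hold. The trade-off: the paper's route is self-contained at this stage, needing nothing beyond the removal lemma and a short case split, while yours costs an invocation of Lemma \ref{lemma: AC1moda} and the shift-conjugation argument but avoids the case analysis and the nondecreasing normalization of $\mathbf{w}'$, and yields extra structural information along the way (every non-$1$ entry exceeds $y_n$, and the suffix experiment is itself invariant for an almost constant vector).
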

\begin{proof}
    We will proceed via induction on $n$. 
    Note that $\chi(\y)=n-1$ implies that there exists $\mathbf{w} \in \N_{>1}^{n-1}$ such that $(1,\mathbf{w}) \in \PAINV_n(\y)$.
    For $n=2$, if $(1,w) \in \PAINV_2(\y)$, where $w>1$, then the conclusion follows vacuously by Lemma \ref{lemma: twocarschar}. 
    For $n=3$, the conclusion follows by Lemma \ref{lemma: charmaximpliesACbasecase}.
    
    Thus, we have verified the base cases, so assume that the statement is true up to some $n=k\geq 3$. 
    That is, for any $\y \in \mathbb{N}^k$ such that $\chi(\y)=k-1$, 
    we have $y_2=y_3=\cdots=y_k$. 
    We now show that this is true for $n=k+1$. 
    Let $\y'=(y_1',y_2',\dots,y_{k+1}') \in \mathbb{N}^{k+1}$ such that $\chi(\y')=k$. 
    Then there exists $\x=(1,\mathbf{w}') \in \PAINV_{k+1}(\y')$, where $\mathbf{w}' \in \N_{>1}^k$. 
    Assume without loss of generality that $\mathbf{w}'$ is nondecreasing. 
    As $w'_{k}=\max \x$, by Lemma \ref{lemma: removal}, we have $(1,\mathbf{w}'_{\vert{k-1}}) \in \PAINV_{k}(\y'_{\vert_k})$.
    Thus, $\chi(\y'_{\vert_k})\leq k-1$, and we have equality since $k-1$ is the maximal characteristic.
    Hence, by the inductive hypothesis, $\y'_{\vert_k}$ is of the form $(b,a^{k-1}) \in \N^k$, which shows that $y'_2=y'_3=\cdots=y'_k$. 
    Consequently, $\y'$ is of the form $(b,a^{k-1},c)$ for some $c \in \N$, and it remains to show that $c=a$. 
    
    To do this, we prove the contrapositive: if $c\neq a$, then $\x=(1,\mathbf{w}') \not\in \PAINV_{k+1}(\y')$. 
    We claim that parking fails for the permutation $\x'=(\mathbf{w}',1)$. 
    This can be rewritten as 
    \[ \x'=(w'_1,w'_2,\dots,w'_{k-1},w'_k,1)=(\mathbf{w}'_{\vert_{k-1}},w'_k,1). \] 
    Since $(1,\mathbf{w}'_{\vert{k-1}}) \in \PAINV_{k}(\y'_{\vert_k})$, we have $(\mathbf{w}'_{\vert_{k-1}},1) \in \PA_{k}(\y'_{\vert_k})$.
    Thus, consider the parking experiment for $(\mathbf{w}'_{\vert_{k-1}},1)$ with length vector $\y'_{\vert_k}$.
    As all cars park successfully and $\mathbf{w}' \in \N_{>1}^{k}$, car $k$, with preference $1$, must occupy $[1,a]$, while the remaining cars completely occupy $[1+a,m-c]$.
    This means that in the parking experiment for $\x'=(\mathbf{w}'_{\vert_{k-1}},w'_k,1)$ (with length vector $\y'$),
    the first $k-1$ cars will completely occupy $[1+a,m-c]$, so they leave $[1,a]$ and $[m-c+1,m]$ unoccupied. 
    By assumption, $w'_{k}=\max \x'>1$, so car $k$, with length $a$, must occupy a subset of $[m-c+1,m]$ (
    otherwise, parking fails, and the proof is complete). 
    
    We now consider the following two distinct cases.
    \begin{case}
         $\y'=(b,a^{k-1},c)$ with $a<c$.
    \end{case}
    \begin{proof}
        Here, car $k$ will take $a$ spots in $[m-c+1,m]$; thus, $c-a$ spots in this interval are still left unoccupied.
        Then, car $k+1$, with preference $1$ and length $c>a$, cannot park in $[1,a]$, and $c-a<c$, so it cannot park in $[m-c+1,m]$ either.
        Hence, parking fails.
    \end{proof}
    \begin{case}
         $\y'=(b,a^{k-1},c)$ with $a>c$.
    \end{case}
    \begin{proof} Here, car $k$, with length $a>c$, cannot park in $[m-c+1,m]$, so parking fails.
    \end{proof}
    Therefore, $c=a$, and $\y'=(b,a^k)$, so $y_1'=y_2'=\cdots=y_{k+1}'$, completing the induction.
\end{proof}
\begin{remark}
    The contrapositive proof strategy displayed above in the inductive step can be used to provide an alternate proof of Lemma \ref{lemma: charmaximpliesACbasecase}.
    Thus, Theorem \ref{theorem: charmaximpliesAC} can be proven without the use of Theorem 5.4 in ~\cite{icermpaper2023inv}; it can then be utilized to provide a quicker proof of that theorem since we have a necessary condition for the characteristic to be maximal.
\end{remark}
Putting everything together, we arrive at Theorem \ref{mainthm: maxchar}.
\begin{proof}[Proof of Theorem \ref{mainthm: maxchar}]
    If (\ref{eqn: equivmaxchar}) holds, then $\chi(\y)=n-1$ by Theorems \ref{theorem: ACmultiple}, \ref{theorem: AClarge}, and \ref{theorem: ACsmall}. 
    If $\chi(\y)=n-1$, then Theorem \ref{theorem: charmaximpliesAC} yields $y_2=y_3=\cdots=y_n$.
    Given this, Theorems \ref{theorem: ACmultiple}, \ref{theorem: AClarge}, and \ref{theorem: ACsmall} imply $y_1\geq y_2$, as desired.
\end{proof}
To conclude this section, we present the enumerative results on $|\PAINV_n(\y)|$ and $|\PAINVND_n(\y)|$ when $\y$ is almost constant.
For constant $\y$, recall the following.
\begin{theorem}[Corollary~3.3 and Corollary~3.4 in \cite{icermpaper2023inv}]
\label{theorem: constantcounting}
    Let $\y=(a^n) \in \N^n$. 
    Then
    \[ |\PAINV_n(\y)|=(n+1)^{n-1} \quad \text{and} \quad |\PAINVND_n(\y)|=C_n, \]
    where $C_n=\frac{1}{n+1}\binom{2n}{n}$ is the $n$th Catalan number \footnote{OEIS \href{https://oeis.org/A000108}{A000108}.}.
\end{theorem}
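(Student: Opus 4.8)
The plan is to exhibit an explicit bijection between $\PAINV_n((a^n))$ and the set $\PF_n$ of ordinary parking functions, thereby transporting the classical enumeration $|\PF_n|=(n+1)^{n-1}$ recalled in the introduction. The starting point is Theorem \ref{theorem: constant}, which says that $\x \in \PAINV_n((a^n))$ precisely when $x_{(i)} \in \{ 1+(k-1)a:k\in [i] \}$ for every $i \in [n]$. In particular every entry of such an $\x$ has the form $1+sa$ with $s \in [n-1]_0$, so the affine rescaling $\phi(\x)\coloneqq \left( \frac{x_1-1}{a}+1,\dots,\frac{x_n-1}{a}+1 \right)$ sends each $\x$ to an integer vector in $[n]^n$.

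Next I would verify that $\phi$ restricts to a bijection $\PAINV_n((a^n)) \to \PF_n$. Writing $x_i=1+s_ia$ and $u_i=s_i+1=\phi(\x)_i$, the nondecreasing rearrangements satisfy $x_{(i)}=1+s_{(i)}a$ and $u_{(i)}=s_{(i)}+1$. Hence the defining condition $x_{(i)}\leq 1+(i-1)a$ of Theorem \ref{theorem: constant} is equivalent to $s_{(i)}\leq i-1$, i.e.\ to $u_{(i)}\leq i$ for all $i \in [n]$, which is exactly the order-statistic characterization of a parking function in Definition \ref{defn: pforderstat}. The map $\phi$ is injective with inverse $\mathbf{u}\mapsto (1+(u_1-1)a,\dots,1+(u_n-1)a)$, and this inverse lands back in $[m]^n$ since $u_i\leq n$ forces $1+(u_i-1)a\leq 1+(n-1)a\leq na=m$. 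Thus $\phi$ is a bijection, giving $|\PAINV_n((a^n))|=|\PF_n|=(n+1)^{n-1}$.

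Finally, because $t\mapsto \frac{t-1}{a}+1$ is strictly increasing, $\phi$ preserves the relative order of entries and so carries nondecreasing vectors to nondecreasing vectors. Therefore $\phi$ restricts further to a bijection $\PAINVND_n((a^n)) \to \PFND_n$ onto the set of nondecreasing parking functions, and the latter is well known to be enumerated by the Catalan number $C_n=\frac{1}{n+1}\binom{2n}{n}$ (nondecreasing parking functions being in bijection with Dyck paths). This yields $|\PAINVND_n((a^n))|=C_n$.

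There is essentially no serious obstacle here, as all of the combinatorial content is carried by Theorem \ref{theorem: constant}: once that characterization is available, the remaining work is the routine check that the rescaling $\phi$ is an order-preserving bijection matching the two parking conditions. The only point requiring mild care is confirming that the inverse map stays within the lot $[m]$, which reduces to the inequality $1+(n-1)a\leq na$, valid for every $a \in \N$.
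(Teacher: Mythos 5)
Your proof is correct and takes essentially the paper's approach: the paper imports Theorem \ref{theorem: constantcounting} from \cite{icermpaper2023inv} without reproving it, but the affine rescaling $x_i \mapsto 1+\frac{x_i-1}{a}$ you construct is exactly the bijection $\varphi$ that the paper itself deploys in Claim \ref{claim: simplifycount} to reduce the almost-constant count to $\mathbf{u}$-parking functions, here specialized to $\mathbf{u}=(1,2,\dots,n)$ so that $\PF_n(\mathbf{u})=\PF_n$. All the steps check out, including integrality of $\frac{x_i-1}{a}$ from Theorem \ref{theorem: constant}, the equivalence $x_{(i)}\leq 1+(i-1)a \iff u_{(i)}\leq i$ matching Definition \ref{defn: pforderstat}, the range check $1+(n-1)a\leq na=m$, and the order-preservation that carries the nondecreasing case to the Catalan count.
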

As discussed, the exact same is true for certain almost constant $\y$.
\begin{corollary}
\label{corollary: easycounting}
    Let $\y=(b,a^{n-1}) \in \N^n$, where $n\geq 2$.
    If $a \mid b$ or $b>(n-1)a$, then 
    \[ |\PAINV_n(\y)|=(n+1)^{n-1} \quad \text{and} \quad |\PAINVND_n(\y)|=C_n. \]
\end{corollary}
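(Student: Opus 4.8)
The plan is to deduce Corollary \ref{corollary: easycounting} directly from Theorems \ref{theorem: ACmultiple} and \ref{theorem: AClarge} by showing that, under the hypothesis $a \mid b$ or $b>(n-1)a$, the set $\PAINV_n(\y)$ is characterized by exactly the same order-statistic condition (\ref{eqn: constant}) that governs the constant case $\y=(a^n)$, and then invoking Theorem \ref{theorem: constantcounting}. Indeed, Theorems \ref{theorem: ACmultiple} and \ref{theorem: AClarge} assert precisely that $\x \in \PAINV_n(\y)$ if and only if $x_{(i)} \in \{1+(k-1)a : k \in [i]\}$ for all $i \in [n]$, which is word-for-word condition (\ref{eqn: constant}) from Theorem \ref{theorem: constant}. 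Since membership in $\PAINV_n(\y)$ depends only on this condition on the order statistics, and the condition is identical for $(a^n)$ and for $(b,a^{n-1})$ in these regimes, I would first observe the set equality $\PAINV_n((b,a^{n-1})) = \PAINV_n((a^n))$ as sets of tuples in $\N^n$.

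The key point making this work is that the membership criterion (\ref{eqn: constant}) is a condition purely on the multiset of entries of $\x$ (equivalently, on its nondecreasing rearrangement), and it makes no reference to the individual lengths beyond the common value $a$ appearing in the allowed residues $1+(k-1)a$. I would emphasize that the allowed values $\{1, 1+a, \dots, 1+(n-1)a\}$ and the caps $x_{(i)} \le 1+(i-1)a$ are exactly the same strings of inequalities in both cases, so the defining predicate on $\x$ coincides. This immediately gives both $\PAINV_n(\y) = \PAINV_n((a^n))$ and, by restricting to nondecreasing tuples, $\PAINVND_n(\y) = \PAINVND_n((a^n))$.

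With the set equalities established, the counts follow at once: Theorem \ref{theorem: constantcounting} gives $|\PAINV_n((a^n))| = (n+1)^{n-1}$ and $|\PAINVND_n((a^n))| = C_n$, so the same two values hold for $\y=(b,a^{n-1})$ whenever $a \mid b$ or $b>(n-1)a$. I would write this as a short two-sentence proof. There is essentially no obstacle here; the entire content of the corollary was already extracted in the three structural theorems, and this statement merely records the enumerative consequence. The only thing to be careful about is to state explicitly that the characterizing condition is invariant under permutation of the entries (so that the characterizations for $\PAINV_n$ and the restriction to $\PAINVND_n$ are compatible), but this is already built into the phrasing of (\ref{eqn: constant}) via order statistics and was noted in the proofs of Theorems \ref{theorem: ACmultiple} and \ref{theorem: AClarge}.
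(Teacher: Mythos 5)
Your proposal is correct and follows essentially the same route as the paper: the paper's proof simply notes that Theorems \ref{theorem: constant}, \ref{theorem: ACmultiple}, and \ref{theorem: AClarge} give the set equality $\PAINV_n((b,a^{n-1}))=\PAINV_n((a^n))$ and then invokes Theorem \ref{theorem: constantcounting}. Your additional remarks about the permutation-invariance of the order-statistic condition and the restriction to nondecreasing tuples are sound but are already implicit in the paper's two-line argument.
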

\begin{proof}
    By Theorems \ref{theorem: constant}, \ref{theorem: ACmultiple}, and \ref{theorem: AClarge}, we have $\PAINV_n((b,a^{n-1}))=\PAINV_n((a^n))$.
    Thus, the result follows by Theorem \ref{theorem: constantcounting}.
\end{proof}
We now readily obtain a proof of Theorem \ref{mainthm: almostconstant}\ref{mainthm: almostconstantreg}.
\begin{proof}[Proof of Theorem \ref{mainthm: almostconstant}\ref{mainthm: almostconstantreg}]
    This is Theorems \ref{theorem: ACmultiple} and \ref{theorem: AClarge} and Corollary \ref{corollary: easycounting}. 
\end{proof}
To prove the next corollary, we will appeal to the following classical counting lemma due to Pitman and Stanley ~\cite{pitmanstanley2002polytope}.
For a nondecreasing $\mathbf{u}=(u_1,u_2,\dots,u_n) \in \N^n$ with successive differences $\Delta(\mathbf{u})\coloneqq (u_1,u_2-u_1,\dots,u_n-u_{n-1}) \in \N_0^n$, let $\PF_n(\mathbf{u})$ denote the set of $\mathbf{u}$-parking functions of length $n$, and let $\PFND_n(\mathbf{u}) \subseteq \PF_n(\mathbf{u})$ denote its subset of nondecreasing elements.
\begin{lemma}[(8) in \cite{pitmanstanley2002polytope}]
\label{lemma: pitmanstanley}
    Let $\mathbf{u}=(u_1,u_2,\dots,u_n) \in \N^n$ be nondecreasing and $k \in [n-1]$.
    Assume that $\Delta(\mathbf{u})=(a,b^{n-k-1},c,0^{k-1})$.
    Then
    \[ |\PF_n(\mathbf{u})|=a\sum_{j=0}^{k}\binom{n}{j}(c-(k+1-j)b)^j(a+(n-j)b)^{n-j-1}. \]
\end{lemma}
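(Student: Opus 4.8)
The plan is to deduce~(8) from the cited identity $|\PF_n(\mathbf{u})| = n!\,V_n(\mathbf{u})$ (Theorem~11 of~\cite{pitmanstanley2002polytope}), turning the enumeration into a volume computation for the polytope $\Pi_n(\mathbf{u})$. The key structural observation I would exploit is that $V_n(\mathbf{u})$ is a polynomial in the increments $\Delta(\mathbf{u}) = (x_1,\dots,x_n)$, so~(8) is an identity of polynomials in $a,b,c$; this means it suffices to verify it either formally or on a Zariski-dense set of parameters, and in particular I am free to treat $a,b,c$ as indeterminates throughout (the factor $c-(k+1-j)b$ may be negative, and that is harmless).

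The first route I would pursue substitutes $\Delta(\mathbf{u}) = (a,b^{\,n-k-1},c,0^{\,k-1})$ into the Pitman--Stanley volume polynomial $n!\,V_n = \sum_{\mathbf{t}}\binom{n}{\mathbf{t}}\,\mathbf{x}^{\mathbf{t}}$, where $\mathbf{t}$ runs over nonnegative integer vectors with $\sum_i t_i = n$ and partial sums $t_1+\cdots+t_i \ge i$ for all $i$. The $k-1$ vanishing increments force $t_i = 0$ for $i > n-k+1$ and render the partial-sum constraints beyond index $n-k+1$ vacuous. Setting $j := t_{n-k+1}$ (the exponent of $c$), the surviving constraint $t_1+\cdots+t_{n-k} \ge n-k$ becomes $n-j \ge n-k$, so $j$ ranges over $\{0,1,\dots,k\}$ and supplies the outer sum; extracting $\binom{n}{j}c^j$ leaves an inner sum over $(t_1,\dots,t_{n-k})$ with $\sum t_i = n-j$, the same partial-sum constraints, and weight $\binom{n-j}{\mathbf{t}}\,a^{t_1}b^{t_2+\cdots+t_{n-k}}$.

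The combinatorial engine for collapsing this ballot-constrained multinomial sum is Abel's binomial (Hurwitz multinomial) identity, equivalently the empirical-distribution viewpoint on $\mathbf{u}$-parking functions developed in~\cite{pitmanstanley2002polytope}. I expect it to produce the factor $a\,(a+(n-j)b)^{n-j-1}$, which is exactly the number of $(a,b)$-arithmetic parking functions of length $n-j$, reflecting that once $j$ coordinates are pinned to the flat increment the remaining coordinates behave like a shorter arithmetic Pitman--Stanley polytope. Reading the target identity backwards clarifies the final form: it is an inclusion--exclusion in which $j$ counts the top order-statistic positions whose arithmetic bound is replaced by the flat cap, $\binom{n}{j}$ chooses them, and $c-(k+1-j)b$ is the effective slack of the cap measured against the arithmetic slope $b$ accrued over the $k+1-j$ intervening steps. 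Useful anchors while calibrating the algebra are the case $n=2,\,k=1$, where the formula must collapse to $a(a+2c)=(a+c)^2-c^2$, and the term $j=0$, which must equal the full length-$n$ arithmetic count $a(a+nb)^{n-1}$.

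The main obstacle is precisely this collapse and the emergence of the shift $-(k+1-j)b$: the partial-sum (ballot) constraints must be honored exactly, and a naive substitution yields $c^j$ rather than the correct $(c-(k+1-j)b)^j$. To control this rigorously, rather than forcing the Abel resummation I would complete the argument by induction on the number $k$ of flat steps, using the iterated-integral recursion for $V_n(\mathbf{u})$ obtained by integrating out the topmost (flat) coordinate: this reduces the shape with parameters $(n,k,c)$ to shapes with fewer flat steps, and one then checks that the right-hand side of~(8) satisfies the same recursion and base case. Since both sides are polynomials in $a,b,c$, verifying the recursion together with a single base case (say $k=1$, handled by the two-line count above) suffices, and the Abel/Vandermonde manipulations confirming the recursion are the technical heart of the argument.
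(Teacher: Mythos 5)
You should first note what the paper actually does with this statement: nothing is proved. The lemma is imported verbatim as equation (8) of \cite{pitmanstanley2002polytope}, and the surrounding text only remarks that it arose from the study of empirical distributions and order statistics and was later given two combinatorial proofs by Yan \cite{yan2000uparking} (a subsequence decomposition and a recursive argument). So there is no internal proof to compare against, and your attempt has to stand on its own as a derivation.

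Your setup is sound as far as it goes. Modulo the paper's slightly loose notation (the volume must be evaluated at the increment vector, which you do implicitly), the reduction to the Pitman--Stanley expansion $|\PF_n(\mathbf{u})|=\sum_{\mathbf{t}}\binom{n}{\mathbf{t}}\prod_i x_i^{t_i}$, over $\mathbf{t}\in\N_0^n$ with $\sum_i t_i=n$ and $t_1+\cdots+t_i\geq i$, is correct; the trailing zeros do kill all terms with support beyond position $n-k+1$; the constraint at index $n-k$ correctly pins $j=t_{n-k+1}\in[k]_0$; and your anchors check out (for $n=2$, $k=1$ the formula collapses to $a(a+2c)$, and the $j=0$ term is $a(a+nb)^{n-1}$). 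But the argument stops exactly at its technical heart, which you candidly flag and never close: matching coefficients of powers of $c$, what remains to prove is the Abel-type collapse
\[ \sum_{\mathbf{t}'}\binom{n-j}{\mathbf{t}'}a^{t_1}b^{\,n-j-t_1}=\frac{a}{\binom{n}{j}}\sum_{i=j}^{k}\binom{n}{i}\binom{i}{j}\bigl(-(k+1-i)b\bigr)^{i-j}\bigl(a+(n-i)b\bigr)^{n-i-1}, \]
where the left sum runs over ballot-constrained $\mathbf{t}'\in\N_0^{\,n-k}$ with total $n-j$, and this is genuinely nontrivial (for $n-j>n-k$ it is not the volume of any shorter arithmetic Pitman--Stanley polytope, so the factor $a(a+(n-j)b)^{n-j-1}$ you anticipate is not available by a length-$(n-j)$ reduction). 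Your fallback induction on $k$ is also not in hand: integrating out the topmost coordinate of $\Pi_n(\mathbf{x})$ does not yield a volume of a smaller polytope but a linear moment of one, namely $V_n(\mathbf{x})=\int_{\Pi_{n-1}(x_1,\dots,x_{n-1})}\bigl(\sum_{i=1}^{n}x_i-\sum_{i=1}^{n-1}p_i\bigr)\,d\mathbf{p}$, so the clean recursion your plan presupposes would itself have to be constructed and then verified against the right-hand side --- which is essentially where Yan's recursive proof does its real work. As written, the proposal is a well-calibrated roadmap with correct reductions and sanity checks, but the decisive identity is asserted rather than proved, so it does not yet constitute a proof of the lemma.
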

Lemma \ref{lemma: pitmanstanley} arose from the study of empirical distributions and order statistics.
It was generalized by Yan in ~\cite{yan2000uparking} via two combinatorial proofs: one involved a strategic decomposition of a $\mathbf{u}$-parking function into subsequences, and the other was a recursive argument.

Armed with this result, we proceed to present and prove the second set of counting results.
\begin{corollary}
\label{corollary: weirdcounting}
    Let $\y=(b,a^{n-1}) \in \N^n$, where $n\geq 2$.
    If $a \nmid b$ and $b<(n-1)a$, then 
    \[ |\PAINV_n(\y)|=\sum_{j=0}^{n-\floor{b/a}-1}(-1)^j\binom{n}{j}\left( n-\floor{\frac{b}{a}}-j \right)^j(n-j+1)^{n-j-1} \]
    and
    \[ |\PAINVND_n(\y)|=\displaystyle\frac{n-\floor{b/a}+1}{n+1}\binom{n+\floor{b/a}}{\floor{b/a}}. \]
\end{corollary}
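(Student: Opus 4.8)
The plan is to rescale both sets onto sets of $\mathbf{u}$-parking functions and then count the latter, invoking Lemma~\ref{lemma: pitmanstanley} for $|\PAINV_n(\y)|$ and a lattice-path (Catalan's triangle) argument for $|\PAINVND_n(\y)|$. Write $f\coloneqq \floor{b/a}$; the hypotheses $a\nmid b$ and $b<(n-1)a$ give $0\le f\le n-2$. By Lemma~\ref{lemma: AC1moda} every entry of an $\x\in\PAINV_n(\y)$ is $\equiv 1\pmod a$, so the rescaling $x_i\mapsto z_i\coloneqq \frac{x_i-1}{a}+1$ is a bijection onto sequences $\z\in\N^n$. Under this map, Theorem~\ref{theorem: ACsmall} reads exactly as $z_{(i)}\le i$ for $i\le f+1$ and $z_{(i)}\le f+1$ for $i>f+1$, i.e. $z_{(i)}\le \min(i,f+1)$. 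Setting $u_i\coloneqq \min(i,f+1)$ and $\mathbf{u}=(u_1,\dots,u_n)$, the rescaling therefore restricts to bijections between $\PAINV_n(\y)$ and $\PF_n(\mathbf{u})$, and between $\PAINVND_n(\y)$ and $\PFND_n(\mathbf{u})$. This reduces both assertions to counting $\mathbf{u}$-parking functions.

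For the first count, I compute $\Delta(\mathbf{u})=(1^{f+1},0^{n-f-1})$. When $f\ge 1$ this matches the template $(\alpha,\beta^{n-k-1},\gamma,0^{k-1})$ of Lemma~\ref{lemma: pitmanstanley} upon taking $\alpha=\beta=\gamma=1$ and $k=n-f\in[n-1]$. Simplifying $\gamma-(k+1-j)\beta=-(n-f-j)$ and $\alpha+(n-j)\beta=n-j+1$ gives
\[ |\PF_n(\mathbf{u})|=\sum_{j=0}^{n-f}(-1)^j\binom{n}{j}(n-f-j)^j(n-j+1)^{\,n-j-1}. \]
The $j=n-f$ summand carries the factor $0^{\,n-f}=0$ (as $n-f\ge 2$), so it vanishes and the sum truncates at $n-f-1$, yielding the claimed expression for $|\PAINV_n(\y)|$. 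The remaining case $f=0$ (that is, $b<a$) is degenerate: then $\mathbf{u}=(1^n)$ forces $\z=(1^n)$, so $|\PF_n(\mathbf{u})|=1$, which one checks directly agrees with the formula.

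For the second count, I encode a nondecreasing $\z$ with $1\le z_1\le\cdots\le z_n\le f+1$ as the monotone lattice path from $(0,0)$ to $(n,f)$ whose $i$-th rightward step has height $z_i-1$; this is a bijection onto all such paths, and the constraint $z_{(i)}\le i$ is equivalent to the path remaining weakly below the diagonal $y=x$ (the binding inequalities occur precisely just before each rightward step). The number of such paths is the ballot number $\frac{n-f+1}{n+1}\binom{n+f}{f}$, an entry of Catalan's triangle; I verify it by checking that $N(n,f)\coloneqq|\PFND_n(\mathbf{u})|$ satisfies the recursion of~\cite{bailey1996catalan} (decomposing on the final step, equivalently on the last entry $z_n$) with boundary data $N(n,0)=1$ and $N(f,f)=C_f$. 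Substituting $f=\floor{b/a}$ gives the stated value of $|\PAINVND_n(\y)|$.

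The rescaling bijection and the algebra in the Pitman--Stanley step are routine; the main obstacle is the nondecreasing count, where one must isolate the correct lattice-path model and prove that $z_{(i)}\le i$ for all $i$ is equivalent to staying weakly below the diagonal, and then match both the recursion and its boundary values to those defining Catalan's triangle.
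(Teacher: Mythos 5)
Your proposal is correct and follows essentially the same route as the paper: the rescaling $x_i\mapsto 1+\frac{x_i-1}{a}$ onto $\mathbf{u}$-parking functions with $u_i=\min\left(i,\floor{\frac{b}{a}}+1\right)$ (the paper's Claim about this bijection), Lemma \ref{lemma: pitmanstanley} applied to $\Delta(\mathbf{u})=(1^{\floor{b/a}+1},0^{n-\floor{b/a}-1})$ for $|\PAINV_n(\y)|$, and the Catalan's-triangle recursion from \cite{bailey1996catalan} via conditioning on the last entry for $|\PAINVND_n(\y)|$. The only cosmetic deviations are your template matching with $\gamma=1$, $k=n-\floor{b/a}$ (producing one extra summand that vanishes, with the case $\floor{b/a}=0$ checked separately) where the paper takes $\gamma=0$, $k=n-\floor{b/a}-1$ directly, and your ballot-path reformulation of the nondecreasing count, which encodes the same recursion the paper proves set-theoretically.
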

\begin{proof}
    Let $\x=(x_1,x_2,\dots,x_n) \in \PAINV_n(\y)$.
    We first compute $|\PAINV_n(\y)|$.
    The main claim is the following, which states that, for an appropriate choice of $\mathbf{u}$, such (resp. nondecreasing) invariant parking assortments are (resp. nondecreasing) $\mathbf{u}$-parking functions up to scaling.
    \begin{claim}
    \label{claim: simplifycount}
        Let $\mathbf{u}=\left(1,2,\dots,\floor{\frac{b}{a}},\left(\floor{\frac{b}{a}}+1\right)^{n-\floor{b/a}}\right)$. 
        Then $\PAINV_n(\y)$ and $\PF_n(\mathbf{u})$ are in bijection, and $\PAINVND_n(\y)$ and $\PFND_n(\mathbf{u})$ are in bijection.
    \end{claim}
    \begin{proof}
        We construct a bijection between the sets in question.
        Define $\varphi:\PAINV_n(\y) \to \PF_n(\mathbf{u})$ by
        \[ \varphi(\x)=\left( 1+\frac{x_1-1}{a},1+\frac{x_2-1}{a},\dots,1+\frac{x_n-1}{a} \right)\coloneqq (\varphi(\x)_1,\varphi(\x)_2,\dots,\varphi(\x)_n). \]
        By Theorem \ref{theorem: ACsmall}, $x_{(i)}\leq 1+(i-1)a$ for all $i \in \left[\floor{\frac{b}{a}}+1\right]$, so $\varphi(\x)_{(i)}\leq i$ for all $i \in \left[\floor{\frac{b}{a}}+1\right]$.
        For the same reason, $x_{(i)}\leq 1+\floor{\frac{b}{a}}a$ for all $i \in \left\{ \floor{\frac{b}{a}}+1,\floor{\frac{b}{a}}+2,\dots,n \right\}$, so $\varphi(\x)_{(i)}\leq \floor{\frac{b}{a}}+1$ for all $i \in \left\{ \floor{\frac{b}{a}}+1,\floor{\frac{b}{a}}+2,\dots,n \right\}$.
        Hence, $\varphi(\x) \in \PF_n(\mathbf{u})$.
        Furthermore, for all $i \in [n]$, we have $x_i\equiv 1\pmod{a}$, so $\frac{x_i-1}{a} \in \mathbb{Z}$ for all $i \in [n]$, which implies that $\varphi(\x)_i \in \mathbb{Z}$.
        Putting everything together, since $1\leq x_i\leq 1+(n-1)a$, we have $1\leq \varphi(\x)_i\leq n$, so $\varphi(\x) \in \PF_n(\mathbf{u})$.
    
        We now construct the inverse of $\varphi$ to show that it is a bijection.
        Given $\mathbf{p}=(p_1,p_2,\dots,p_n) \in \PF_n(\mathbf{u})$, define $\psi: \PF_n(\mathbf{u}) \to \PAINV_n(\y)$ by
        \[ \psi(\mathbf{p})=(1+(p_1-1)a,1+(p_2-1)a,\dots,1+(p_n-1)a)\coloneqq \{ \psi(\mathbf{p})_1,\psi(\mathbf{p})_2,\dots,\psi(\mathbf{p})_n \}. \]
        Since $p_{(i)}\leq i$ for all $i \in \left[\floor{\frac{b}{a}}+1\right]$, we have $\psi(\x)_{(i)}\in \{ 1,1+a,\dots,1+(i-1)a \}$ for all $i \in \left[\floor{\frac{b}{a}}+1\right]$, and as $p_{(i)}\leq \floor{\frac{b}{a}}+1$ for all $i \in \left\{ \floor{\frac{b}{a}}+1,\floor{\frac{b}{a}}+2,\dots,n \right\}$, we have $\psi(\x)_{(i)}\in \left\{ 1,1+a,\dots,1+\floor{\frac{b}{a}}a \right\}$ for all $i \in \left\{ \floor{\frac{b}{a}}+1,\floor{\frac{b}{a}}+2,\dots,n \right\}$.
        Thus, $\psi(\mathbf{p}) \in \PAINV_n(\y)$ by Theorem \ref{theorem: ACsmall}.
        Note that $\psi \circ \varphi(\x)=\x$ and $\varphi \circ \psi(\mathbf{p})=\mathbf{p}$, so we indeed have $\varphi^{-1}=\psi$.

        Lastly, we show that $\varphi:\PAINVND_n(\y) \to \PFND_n(\mathbf{u})$ is also a bijection.
        It suffices to prove that $\varphi$ and $\psi$ map nondecreasing elements to nondecreasing elements.
        Note that if $\x=(x_1,x_2,\dots,x_n) \in \PAINVND_n(\y)$, then for all $i,j \in [n]$ with $i\leq j$, $x_i\leq x_j$ implies $1+\frac{x_i-1}{a}\leq 1+\frac{x_j-1}{a}$ since $a>0$, so $\varphi(\x) \in \PFND_n(\mathbf{u})$.
        Similarly, if $\mathbf{p}=(p_1,p_2,\dots,p_n) \in \PFND_n(\mathbf{u})$, then for all $i,j \in [n]$ with $i\leq j$, $p_i\leq p_j$ implies $1+(p_i-1)a\leq 1+(p_j-1)a$ since $a>0$, so $\psi(\mathbf{p}) \in \PAINVND_n(\y)$.
    \end{proof}
    Claim \ref{claim: simplifycount} establishes that it suffices to compute $|\PF_n(\mathbf{u})|$.
    To do so, we compute $\Delta(\mathbf{u})=(1^{\floor{b/a}+1},0^{n-\floor{b/a}-1})=(1,1^{\floor{b/a}},0,0^{n-\floor{b/a}-2})$.
    The result then follows by Lemma \ref{lemma: pitmanstanley}.

    Now, we turn our attention to computing $|\PAINVND_n(\y)|$.
    For $n,k \in \N_0$ such that $n\geq k$, let
    \[ f(n,k)\coloneqq |\PFND_n((1,2,\dots,k,(k+1)^{n-k}))|. \]
    By Claim \ref{claim: simplifycount}, $|\PAINVND_n(\y)|=f\left(n,\floor{\frac{b}{a}}\right)$, where $n>\floor{\frac{b}{a}}+1$, so it suffices to compute $f(n,k)$.
    To do so, we will make use of the following recursion.
    \begin{claim}\label{claim: CTrec}
        We have $f(n,0)=1$ for all $n \in \N_0$, $f(n,1)=n$ and $f(n+1,n+1)=f(n+1,n)$ for all $n \in \N$, and $f(n+1,k)=f(n+1,k-1)+f(n,k)$ for all $n,k\in \N$ such that $1<k<n+1$.
    \end{claim}
    \begin{proof}
        We have $f(n,0)=|\PFND_n((1^n))|=|\{ (1^n) \}|=1$ by definition \footnote{for $n=0$, the empty tuple is the unique element in this set, but this will be immaterial for our purposes.}.
        
        Similarly, $\PFND_n((1,2^{n-1}))=\{ (1^j,2^{n-j}):j \in [n] \}$ because if $\mathbf{p}=(p_1,p_2,\dots,p_n) \in \PFND_n((1,2^{n-1}))$, then there exists $j=\max\{ i\in [n]:p_i=1 \}$, and for $i \in [n] \setminus [j]$, we must have $p_i=2$, showing that $\mathbf{p}=(1^j,2^{n-j})$; the reverse inclusion is immediate.
        Thus, $f(n,1)=|\PFND_n((1,2^{n-1}))|=n$.
        Moreover, $f(n+1,n+1)=f(n+1,n)$ since 
        \[ (1,2,\dots,n+1,((n+1)+1)^{(n+1)-(n+1)})=(1,2,\dots,n+1)=(1,2,\dots,n,(n+1)^{(n+1)-n}). \]
        
        Now, for $n,k\in \N$ such that $1<k<n+1$, let $\mathbf{v}=(1,2,\dots,k,(k+1)^{n+1-k})$, so that $f(n+1,k)=|\PFND_{n+1}(\mathbf{v})|$, and let $\mathbf{v}'=(1,2,\dots,k-1,k^{n-k+2})$.
        Notice that
        \[ \PFND_{n+1}(\mathbf{v})=\PFND_{n+1}(\mathbf{v}') \cup \{ \mathbf{p}=(p_1,p_2,\dots,p_{n+1}) \in \PFND_{n+1}(\mathbf{v}): p_{n+1}=k+1 \}. \]
        It is immediate that the above is a disjoint union since if $\mathbf{p}=(p_1,p_2,\dots,p_{n+1}) \in \PFND_{n+1}(\mathbf{v}')$, then $p_{n+1}\leq k$.
        Next, let $\mathbf{p}=(p_1,p_2,\dots,p_{n+1}) \in \PFND_{n+1}(\mathbf{v})$.
        If $p_{n+1}<k+1$, then $\max \mathbf{p}\leq k$, so it must satisfy $p_i\leq i$ for all $i \in [k]$, and $p_i\leq k$ otherwise, meaning $\mathbf{p} \in \PFND_{n+1}(\mathbf{v}')$.
        The reverse inclusion follows by definition.
        Therefore, 
        \begin{align*} 
            f(n+1,k)&=|\PFND_{n+1}(\mathbf{v})| \\
            &=|\PFND_{n+1}(\mathbf{v}')|+|\{ \mathbf{p}=(p_1,p_2,\dots,p_{n+1}) \in \PFND_{n+1}(\mathbf{v}): p_{n+1}=k+1 \}| \\
            &=|\PFND_{n+1}(\mathbf{v}')|+|\{ \mathbf{p} \in \PFND_n(\mathbf{v''}): \mathbf{v}''=(1,2,\dots,k,(k+1)^{n-k}) \}| \\
            &=f(n+1,k-1)+f(n,k).
        \end{align*}
    \end{proof}
    Claim \ref{claim: CTrec} establishes that $f(n,k)$ satisfies the same recurrence as Catalan's triangle \footnote{OEIS \href{http://oeis.org/A009766}{A009766}.}, which has closed form $\frac{n-k+1}{n+1}\binom{n+k}{k}$ (cf. Lemma 1 and Theorem in ~\cite{bailey1996catalan}).
    Therefore, the result follows by setting $k=\floor{\frac{b}{a}}$.
\end{proof}
\begin{remark}
    For $\floor{\frac{b}{a}}=0,1,2$, one can show using Corollary \ref{corollary: weirdcounting} that we have $|\PAINV_n(\y)|=1,2^n-1,3^n-2^n-n$, respectively. 
    As detailed by Yan in ~\cite{yan2000uparking}, closed forms of the sum in Lemma \ref{lemma: pitmanstanley} do not exist for general $n$.
    However, one fact worth noting is that when $\floor{\frac{b}{a}}=2$, it turns out that $|\PAINV_n(\y)|$ is also the number of regions in the $G$-Shi arrangement when $G$ is the cycle graph with $n$ vertices \footnote{OEIS \href{http://oeis.org/A355645}{A355645}}.
\end{remark}
To finish, we may now easily prove the rest of Theorem \ref{mainthm: almostconstant}.
\begin{proof}[Proof of Theorem \ref{mainthm: almostconstant}\ref{mainthm: almostconstantirreg}]
    This is Theorem \ref{theorem: ACsmall} and Corollary \ref{corollary: weirdcounting}.
\end{proof}
\section{Properties of $\PAINV_n(\y)$ in Relation to the Degree and Characteristic}
\label{section: deg and char}
Given the intricacies of the set $\PAINV_n(\y)$ for $\y$ of maximal characteristic, our aim in this section is to study the structure of the set given any $\y$.
As we will see, deriving certain structural results can allow us to recover general information about the degree and characteristic; this will comprise Theorem \ref{mainthm: degchar}.

Throughout this section, the following result will be helpful.
\begin{lemma}[Proposition 2.2 in ~\cite{icermpaper2023inv}]
    \label{lemma: nondecreasing}
    Let $\y = (y_1, y_2, \ldots, y_n) \in \N^n$. 
    Assume $\x = (x_1, x_2, \ldots, x_n) \in [m]^n$ is nondecreasing.
    Then, $\x \in \PA_n(\y)$ if and only if $x_i \leq 1 + \sum_{j=1}^{i-1} y_j$ for all $i \in [n]$.
\end{lemma}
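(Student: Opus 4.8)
The plan is to exploit the fact that nondecreasing preferences force the cars to fill the lot from left to right with no gaps, so that the occupied region after the first $i$ cars park is always an initial segment. Write $P_i \coloneqq \sum_{j=1}^{i} y_j$ with $P_0 \coloneqq 0$, so that the claimed condition reads $x_i \leq 1 + P_{i-1}$ for all $i \in [n]$. The single structural fact driving both directions is that in this parking model a car never occupies a spot strictly to the left of its preference: car $i$ parks in a block of $y_i$ contiguous spots whose leftmost coordinate is $\geq x_i$.

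For the direction ($\Leftarrow$), I would argue by induction on $i$ that if $x_j \leq 1 + P_{j-1}$ for all $j \leq i$, then cars $1, \ldots, i$ all park and together occupy exactly the interval $[1, P_i]$. The base case $i = 0$ is vacuous. For the inductive step, the hypothesis gives occupied region $[1, P_{i-1}]$ with $[P_{i-1}+1, m]$ empty; since $x_i \leq 1 + P_{i-1}$, car $i$ either parks at its preference (when $x_i = P_{i-1}+1$) or is pushed to the first free spot $P_{i-1}+1$ (when $x_i \leq P_{i-1}$), in both cases occupying $[P_{i-1}+1, P_i]$, which is available because $P_i \leq P_n = m$. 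Hence the occupied set becomes $[1, P_i]$, completing the induction; taking $i = n$ shows every car parks, so $\x \in \PA_n(\y)$.

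For the direction ($\Rightarrow$), I would give a direct counting argument rather than another induction. If $\x \in \PA_n(\y)$, then since the car lengths sum to exactly $m$ and occupy disjoint blocks inside $[1,m]$, every one of the $m$ spots is filled. Fix $i$ and consider the spots $[1, x_i - 1]$. By the no-backfilling fact, any car occupying a spot in this interval has preference strictly less than $x_i$; because $\x$ is nondecreasing, the set of such cars is contained in $\{1, \ldots, i-1\}$, whose total length is at most $P_{i-1}$. Since all $x_i - 1$ spots of $[1, x_i-1]$ must nonetheless be covered, we obtain $x_i - 1 \leq P_{i-1}$, that is, $x_i \leq 1 + P_{i-1}$, as required.

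The only delicate point, and the main thing to state carefully, is the handling of ties in the forward direction: one must verify that the cars with preference strictly less than $x_i$ genuinely form a subset of the first $i-1$ cars, rather than possibly including car $i$ itself or a later car. This is exactly where the nondecreasing hypothesis is used, since cars $i, i+1, \ldots, n$ all have preference $\geq x_i$. Everything else is routine bookkeeping about contiguous intervals, and as a sanity check the condition at $i=1$ forces $x_1 \leq 1$, hence $x_1 = 1$, matching the obvious requirement that spot $1$ be occupied.
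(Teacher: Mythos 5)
Your proof is correct. There is nothing in the paper to compare it against: the paper imports this statement as Proposition~2.2 of \cite{icermpaper2023inv} and gives no proof of its own, so your argument stands as a self-contained justification. Both halves are sound. In the ($\Leftarrow$) direction, the invariant that the occupied region after cars $1,\dots,i$ park is exactly the initial segment $[1,P_i]$ is the right one, and your two subcases are consistent with the parking rule: when $x_i\leq P_{i-1}$ the spot $x_i$ is occupied, and since everything beyond $P_{i-1}$ is empty, the next block of $y_i$ contiguously available spots is $[P_{i-1}+1,P_i]$, which exists because $P_i\leq m$. In the ($\Rightarrow$) direction, your counting argument is clean and arguably slicker than a second induction: since the lengths sum to $m$ and the cars occupy disjoint blocks in $[1,m]$, every spot is covered; since a car's block has leftmost coordinate at least its preference, any car covering a spot of $[1,x_i-1]$ has preference strictly less than $x_i$, hence index less than $i$ by monotonicity; so $x_i-1\leq\sum_{j=1}^{i-1}y_j$. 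You correctly identify the one delicate point, namely that the nondecreasing hypothesis is what confines the covering cars to $\{1,\dots,i-1\}$ (without it, a later car with small preference could backfill $[1,x_i-1]$, and indeed the lemma fails for arbitrary $\x$). The sanity check $x_1=1$ is also apt.
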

For proof of Theorem \ref{mainthm: degchar} specifically, 
the three lemmas below will be crucial ingredients; Lemma \ref{lemma: replacement} is due to~\cite{personalcommuncation}, and we provide an independent proof.
In particular, Lemma \ref{lemma: swapalgo} will allow us to morph an invariant parking assortment into a more convenient parking assortment, which yields a useful technique to help prove Theorem \ref{mainthm: degchar}\ref{mainthm: degchar closure}.
\begin{lemma}
\label{lemma: extendPA}
    Let $\y=(y_1,y_2,\dots,y_n) \in \N^n$ and $\y^+=(\y,y_{n+1}) \in \N^n$. 
    If $\x \in \PA_n(\y)$, then $\x^+=(\x,x_{n+1}) \in \PA_{n+1}(\y^+)$ if and only if $x_{n+1}\leq 1+\sum_{j=1}^{n}y_j$. 
\end{lemma}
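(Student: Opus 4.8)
The plan is to prove both directions by analyzing the parking experiment for $\x^+$, leveraging the fact that the first $n$ cars behave identically in the experiments for $\x$ (with $\y$) and $\x^+$ (with $\y^+$), since appending a car does not change how the earlier cars park. Let $m \coloneqq \sum_{j=1}^n y_j$, so that the street for $\y^+$ has length $m + y_{n+1}$, and the street for $\y$ has length $m$.

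For the forward direction, I would assume $\x^+ \in \PA_{n+1}(\y^+)$ and argue by contradiction that $x_{n+1} \le 1 + m$. Suppose instead $x_{n+1} > 1 + m$, i.e.\ $x_{n+1} \ge 2 + m$. Since $\x \in \PA_n(\y)$, the first $n$ cars collectively occupy a total of $m$ spots; these spots all lie in $[1, m]$, because in the experiment for $\y$ the entire street is $[1,m]$ and every car parks. Hence after the first $n$ cars have parked in the experiment for $\x^+$, the interval $[1,m]$ is fully occupied and $[m+1, m+y_{n+1}]$ is entirely empty. Car $n+1$ has preference $x_{n+1} \ge m + 2$, so it drives to spot $x_{n+1}$ and tries to occupy $[x_{n+1}, x_{n+1} + y_{n+1} - 1]$. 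But the available empty region is exactly $[m+1, m+y_{n+1}]$, whose rightmost spot is $m + y_{n+1}$; since $x_{n+1} \ge m+2$, the requested block overshoots, leaving fewer than $y_{n+1}$ contiguous spots at or beyond $x_{n+1}$. More carefully, the contiguous empty spots available starting from $x_{n+1}$ (or the next empty spot) number at most $m + y_{n+1} - (m+1) = y_{n+1} - 1 < y_{n+1}$, so car $n+1$ cannot park, contradicting $\x^+ \in \PA_{n+1}(\y^+)$.

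For the converse, I would assume $x_{n+1} \le 1 + m$ and show all $n+1$ cars park. The first $n$ cars park exactly as they do in the experiment for $\x \in \PA_n(\y)$, filling $[1,m]$ completely, so the only empty spots when car $n+1$ arrives are $[m+1, m + y_{n+1}]$. Car $n+1$ has preference $x_{n+1} \le m+1$, so every spot to the right of (and including) its preference that is empty forms the block $[m+1, m+y_{n+1}]$, which has exactly $y_{n+1}$ contiguous empty spots. By the parking rule, car $n+1$ drives up to $x_{n+1}$, finds it occupied (since $x_{n+1} \le m$) or lands at $m+1$ (if $x_{n+1} = m+1$), and parks in the next $y_{n+1}$ contiguous available spots, namely all of $[m+1, m+y_{n+1}]$. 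Thus every car parks and $\x^+ \in \PA_{n+1}(\y^+)$.

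The main obstacle is being precise about the parking dynamics: I must justify that the first $n$ cars fully occupy $[1,m]$ and leave no gaps, which follows because $\x \in \PA_n(\y)$ means all $n$ cars of total length $m$ park within a street of length exactly $m$, forcing $[1,m]$ to be saturated. One subtlety worth handling cleanly is the case $x_{n+1} = m+1$ versus $x_{n+1} \le m$, both of which funnel car $n+1$ into $[m+1, m+y_{n+1}]$; since this interval has precisely $y_{n+1}$ empty contiguous spots and no empty spot precedes it from the car's preference, the ``next available'' rule places the car there exactly. Note that the stated domain $\y^+ \in \N^n$ in the lemma is a typographical slip for $\N^{n+1}$; I would treat $\y^+$ as the $(n+1)$-entry vector $(\y, y_{n+1})$ as intended, and the argument above requires no other correction.
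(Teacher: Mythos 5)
Your proposal is correct and follows essentially the same approach as the paper: both arguments observe that since $\x \in \PA_n(\y)$, the first $n$ cars saturate $[1,\sum_{j=1}^{n}y_j]$, leaving exactly the final $y_{n+1}$ spots empty, so car $n+1$ parks if and only if its preference does not exceed $1+\sum_{j=1}^{n}y_j$. Your added care about the sub-cases $x_{n+1}\leq m$ versus $x_{n+1}=m+1$ and the noted typographical slip ($\N^n$ for $\N^{n+1}$) are consistent with, and slightly more explicit than, the paper's own proof.
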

\begin{proof}
    Consider the parking experiment under $\x^+$. 
    Because $\x \in \PA_n(\y)$, the first $n$ cars occupy $[1,\sum_{j=1}^{n}y_j]$. 
    This leaves only spots $[1+\sum_{j=1}^{n}y_j,\sum_{j=1}^{n+1}y_j]$ empty. 
    If $x_{n+1}>1+\sum_{j=1}^{n}y_j$, then the last car, with length $y_{n+1}$, must park in $[x_{n+1},\sum_{j=1}^{n+1}y_j]$, but $|[x_{n+1},\sum_{j=1}^{n+1}y_j]|<y_{n+1}$, so parking fails.
    If $x_{n+1}\leq 1+\sum_{j=1}^{n}y_j$, then since $[1,\sum_{j=1}^{n}y_j]$ is occupied, the last car occupies $[1+\sum_{j=1}^{n}y_j,\sum_{j=1}^{n+1}y_j]$, so parking succeeds.
\end{proof}
\begin{lemma}
\label{lemma: replacement}
    Let $\y=(y_1,y_2,\dots,y_n) \in \N^n$. Then if $\x=(x_1,x_2,\dots,x_n) \in \PA_n(\y)$, and there exists $i \in [n]$ such that $x_i=\min(x_i,x_{i+1},\dots,x_n)$, then $\mathbf{r}=(x_1,\dots,x_{i-1},r,x_{i+1},\dots,x_n) \in \PA_n(\y)$ for any $r \in [x_i]$. In particular, $(\x_{\vert_{n-1}},1) \in \PA_n(\y)$.
\end{lemma}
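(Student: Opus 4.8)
The plan is to run the parking experiments for $\x$ and for $\mathbf{r}$ side by side and show they coincide. Since $\x$ and $\mathbf{r}$ agree in their first $i-1$ entries, cars $1,2,\dots,i-1$ occupy exactly the same set of spots $C$ in both experiments. The entire argument then reduces to checking that car $i$ parks in the same block under preference $x_i$ as under any $r\in[x_i]$: once this is established, cars $i+1,\dots,n$ see identical preferences and an identical occupied set, so they park identically and $\mathbf{r}\in\PA_n(\y)$ follows.

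The crucial consequence of the hypothesis $x_i=\min(x_i,x_{i+1},\dots,x_n)$ is that cars $i,i+1,\dots,n$ all have preference at least $x_i$. In the experiment for $\x$, each such car parks in a block whose left endpoint is at least its preference, hence at least $x_i$, so none of cars $i,\dots,n$ ever occupies a spot in $[1,x_i-1]$. I now invoke $\x\in\PA_n(\y)$: all $n$ cars park, and since they fill disjoint blocks inside a street of exactly $m=\sum_{j=1}^{n}y_j$ spots, success forces all of $[1,m]$ to be occupied. As the spots of $[1,x_i-1]$ are untouched by cars $i,\dots,n$, they must already be completely occupied by cars $1,\dots,i-1$; that is, $[1,x_i-1]\subseteq C$. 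Because $C$ is determined solely by the first $i-1$ preferences, the same containment holds at the corresponding stage of the experiment for $\mathbf{r}$.

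With $[1,x_i-1]$ fully occupied before car $i$ arrives, the replacement is harmless. For $r\in[x_i]$ every spot of $[r,x_i-1]$ lies in $[1,x_i-1]$ and is therefore occupied, so no free block of $y_i$ contiguous spots can begin at a position in $[r,x_i-1]$. Consequently, when car $i$ searches from $r$ for the first such block, it finds precisely the block it would have found searching from $x_i$. Hence car $i$ occupies the identical interval in both experiments, the occupied sets agree from this point on, and the remaining cars park identically, giving $\mathbf{r}\in\PA_n(\y)$. The ``in particular'' statement is the special case $i=n$ (where the minimality hypothesis holds automatically, as the suffix consists of the single entry $x_n$) together with $r=1$.

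The step I expect to be the main obstacle is the middle one: recognizing that minimality, combined with the identity $m=\sum_j y_j$, forces the prefix $[1,x_i-1]$ to be saturated by the first $i-1$ cars. Once this is in hand the rest is routine bookkeeping; in particular, I would need neither Lemma~\ref{lemma: nondecreasing} nor any rearrangement of $\x$. The only minor care required is the boundary case $r=x_i$, where the conclusion is immediate, and phrasing the block-search comparison correctly via the fact that an occupied starting position cannot begin a free block.
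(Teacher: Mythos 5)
Your proposal is correct and is essentially the paper's own argument: the central step in both is that the minimality of $x_i$ among $x_i,\dots,x_n$ forces $[1,x_i-1]$ to be fully occupied before car $i$ arrives (the paper proves this by contradiction via an unoccupied spot $s$, you prove it directly via the saturation of $[1,m]$ — the same reasoning), after which the replacement $r\in[x_i]$ leaves car $i$'s parking block, and hence the whole experiment, unchanged. No gaps; your handling of the ``in particular'' case as $i=n$, $r=1$ matches the intended reading.
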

\begin{proof}
    Consider the parking experiment under $\x$. 
    The key observation is that $[1,x_i-1]$ is occupied when car $i$ attempts to park. 
    To see this, assume the contrary; i.e. there exists an $s \in [1,x_i-1]$ that is unoccupied.
    Then because $x_i=\min(x_i,x_{i+1},\dots,x_n)$, cars $i,i+1,\dots,n$ all drive past $s$, leaving $s$ unoccupied after all cars have parked, contradicting $\x \in \PA_n(\y)$. 
    
    Thus, if car $i$ instead had preference less than $x_i$ (and all other preferences are unchanged), then since $[1,x_i-1]$ is occupied, its choice of parking spots remains the same, implying that all cars can still park. 
    Therefore, $\mathbf{r} \in \PA_n(\y)$.
\end{proof}
\begin{lemma}
\label{lemma: swapalgo}
    Let $\mathbf{a}=(a_1,a_2,\dots,a_n) \in \N^n$ and $S=\{ i \in [n]:a_1>a_i \}$. 
    Then there is a permutation $\b=(b_1,b_2,\dots,b_n)$ of the entries of $\mathbf{a}$ such that 
    \begin{enumerate}[label=(\roman*)]
        \item\label{lemma: swapalgo i} $b_1=\min \b$.
        \item\label{lemma: swapalgo ii} $b_i\geq a_i$ for all $i \in S$ and $b_i=a_i$ for all $i \notin S \cup \{ 1 \}$.
        \item\label{lemma: swapalgo iii} $b_i=\min(b_i,b_{i+1},\dots,b_n)$ for all $i \in S$ such that $b_i>a_i$.
    \end{enumerate}
\end{lemma}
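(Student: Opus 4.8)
The plan is to build $\b$ by an explicit left-to-right filling procedure governed by a multiset invariant. First I would discard the inert coordinates: property (ii) forces $b_i = a_i$ for every $i \notin S \cup \{1\}$, so the only freedom lies in redistributing the values at the positions $p_0 \coloneqq 1 < p_1 < \cdots < p_k$ comprising $S \cup \{1\}$. Writing $c_j \coloneqq a_{p_j}$, observe that $c_0 = a_1$ is the strict maximum of $\{c_0,\ldots,c_k\}$ (for $j \ge 1$ we have $p_j \in S$, i.e.\ $a_1 > a_{p_j}$), and that every coordinate outside $S \cup \{1\}$ holds a value $\ge a_1 \ge \max_j c_j$. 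Consequently these inert entries dominate anything placed within $S \cup \{1\}$, and the suffix-minimum condition (iii) will reduce to a statement about the order in which values are assigned among $p_1,\ldots,p_k$.

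Next I would reserve $\mu \coloneqq \min_j c_j$ for position $p_0 = 1$; this yields $b_1 = \min \b$, securing (i), and leaves the multiset $U_0 \coloneqq \{c_0,\ldots,c_k\} \setminus \{\mu\}$ to be placed at $p_1,\ldots,p_k$ against the demands $c_1,\ldots,c_k$. I would then fill $p_1,\ldots,p_k$ in order by the greedy rule: at $p_j$, if the exact value $c_j$ is still available, place it (a \emph{fix}); otherwise place the current smallest available value (a \emph{bump}). Fixing gives $b_{p_j} = c_j$ and bumping gives $b_{p_j} > c_j$, so property (ii) follows immediately once the procedure is shown to run to completion.

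The heart of the argument, and the step I expect to be the main obstacle, is showing the greedy never stalls: whenever $c_j$ is unavailable, the current smallest available value must be strictly larger than $c_j$, so a legal bump exists. For this I would carry the invariant that, just before processing $p_j$, the available multiset $U$ equals $\{g\} \uplus (E \setminus \{\min E\})$, where $E \coloneqq \{c_j,\ldots,c_k\}$ is the remaining demand multiset and $g \ge \max E$ is a single surplus value. This holds initially with $g = c_0$, since $U_0$ and $\{c_1,\ldots,c_k\}$ differ exactly by trading the minimum $\mu$ for the maximum $c_0$. Verifying that a fix or a bump preserves the invariant is a short multiset computation; crucially, a bump can occur only when $c_j = \min E$ has multiplicity one, and in that case the current smallest available value is the second-smallest element of $E$ (or $g$), both strictly above $c_j$, which is precisely what rules out stalling.

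Finally I would verify the suffix-minimum condition (iii). A bumped coordinate $p_j$ receives $\min U$ at the moment it is filled; every value placed afterward lies in $U \setminus \{\min U\}$ and so is $\ge b_{p_j}$, while every inert coordinate to its right holds a value $\ge a_1 \ge b_{p_j}$, and the reserved entry $\mu$ sits at position $1$, to the left of all of $p_1,\ldots,p_k$. Hence $b_{p_j} = \min(b_{p_j}, b_{p_j+1}, \ldots, b_n)$, and $\b$ satisfies (i)--(iii), completing the construction.
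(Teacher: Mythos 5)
Your proposal is correct, but it proves the lemma by a genuinely different route than the paper. The paper runs an iterative swap algorithm: starting from $\mathbf{a}^{(0)}=\mathbf{a}$, it repeatedly swaps the first entry with the entry at position $j_k=\max S^{(k)}$, where $S^{(k)}=\{i\in[n]:a_1^{(k)}>a_i^{(k)}\}$, stopping once the first entry is the minimum; termination is proved by showing the index sets $S^{(k)}$ are nested and strictly decreasing, and properties \ref{lemma: swapalgo ii} and \ref{lemma: swapalgo iii} are then extracted by tracing each coordinate's history across iterations (in particular, distinguishing coordinates that equal some $j_k$ from those that do not). You instead give a one-pass greedy construction: freeze the coordinates outside $S\cup\{1\}$, reserve the global minimum for position $1$, and fill the positions of $S$ left to right, fixing $a_i$ when still available and otherwise bumping in the current minimum of the available multiset. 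I checked your invariant $U=\{g\}\uplus\left(E\setminus\{\min E\}\right)$ with a single surplus $g\geq\max E$: it holds initially with $g=c_0=a_1$ (since $c_0$ strictly dominates the other $c_j$), and the maintenance cases (fix with $c_j>\min E$; fix with $c_j=\min E$ of multiplicity at least two; bump, which forces $c_j=\min E$ of multiplicity one and $g>c_j$) all go through, so the greedy never stalls, bumps are strict increases (giving \ref{lemma: swapalgo ii}), and a bumped position receives the minimum of all values yet to be placed while inert coordinates to its right hold values $\geq a_1$ (giving \ref{lemma: swapalgo iii}). What each approach buys: the paper's algorithm is shorter to state, but its verification is global, requiring bookkeeping across many iterations of which entries moved when; your invariant localizes all verification to a single step, and properties \ref{lemma: swapalgo ii} and \ref{lemma: swapalgo iii} fall out almost by definition of fix and bump, at the cost of a slightly more elaborate setup (the multiset frame and the surplus value $g$). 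On small examples the two constructions even appear to produce the same permutation, but the correctness arguments are structured quite differently.
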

\begin{proof}
    We will describe an algorithm that sequentially swaps certain entries of $\mathbf{a}$ to construct $\b$. 
    Set $\mathbf{a}^{(0)}=\mathbf{a}$ and $S^{(0)}=S$. 
    For $k \in \N_0$, first check if $\mathbf{a}^{(k)}=(a_1^{(k)},a_2^{(k)},\dots,a_n^{(k)})$ satisfies $a_1^{(k)}=\min\mathbf{a}^{(k)}$.
    If so, we claim $\b=\mathbf{a}^{(k)}$ and stop. 
    Otherwise, construct the index set $S^{(k)}=\{ i \in [n]:a_1^{(k)}>a_i^{(k)} \}$, and let $j_k=\max S^{(k)}$. 
    Then, swap the positions of $a_1^{(k)}$ and $a_{j_k}^{(k)}$ to obtain the next iterate
    \[ \mathbf{a}^{(k+1)}=(a_1^{(k+1)},a_2^{(k+1)},\dots,a_n^{(k+1)})\coloneqq (a_{j_k}^{(k)},a_2^{(k)},\dots,a_{j_k-1}^{(k)},a_1^{(k)},a_{j_k+1}^{(k)},\dots,a_n^{(k)}), \]
    and repeat the process.

    To prove the correctness of this algorithm, we will first show that it terminates; this gives property \ref{lemma: swapalgo i} in the process.
    \begin{claim}
    \label{claim: indexnestsd}
        The sequence of index sets $\{ S^{(k)} \}$ is nested and strictly decreasing.
    \end{claim}
    \begin{proof}
        Let $k$ be any nonnegative integer, and assume $S^{(k)}\neq \emptyset$.
        Because $a_1^{(k+1)}=a_{j_{k}}^{(k)}$, where $j_{k}=\max S^{(k)}=\max\{ i \in [n]:a_1^{(k)}>a_i^{(k)} \}$, we have $a_1^{(k)}>a_1^{(k+1)}$ (i.e. the sequence consisting of the first entries $a_1^{(k)}$ of the iterates is strictly decreasing).
        Moreover, by construction, $a_i^{(k)}=a_i^{(k+1)}$ if and only if $i\neq 1,j_{k}$. 
        It is clear that $1\notin S^{(k)}\cup S^{(k+1)}$.
        We also have $a_{j_{k}}^{(k+1)}=a_1^{(k)}>a_1^{(k+1)}$, so $j_{k}\notin S^{(k+1)}$. 
        Thus,
        \[ S^{(k+1)}=\{ i \in [n]:a_1^{(k+1)}>a_i^{(k+1)} \}=\{ i \in [n]:a_1^{(k+1)}>a_i^{(k)} \}\subsetneq \{ i \in [n]:a_1^{(k)}>a_i^{(k)} \}=S^{(k)}, \]
        as $a_1^{(k)}>a_1^{(k+1)}$ and $j_{k} \in S^{(k)}$, while $j_{k} \notin S^{(k+1)}$, implying our claim.
    \end{proof} 

    \begin{claim}
    \label{claim: indexemptyequiv}
        We have 
        \[ \argmin \mathbf{a} \notin S^{(k)} \iff \mathbf{a}^{(k)} \text{ satisfies } a_1^{(k)}=\min\mathbf{a} \iff S^{(k)}=\emptyset. \]
    \end{claim}
    \begin{proof}
        Note that $\min\mathbf{a}=\min\mathbf{a}^{(k)}$ since each iterate is a permutation of the entries of $\mathbf{a}$ by construction. If $\argmin \mathbf{a} \notin S^{(k)}$, then all $i \in [n]$ such that $a_1^{(k)}>a_i^{(k)}$ satisfy $a_i^{(k)}>\min \mathbf{a}$. 
        But this means $\min \mathbf{a}\neq a_i^{(k)}$ for all $i \in [n] \setminus \{ 1 \}$, so $a_1^{(k)}=\min\mathbf{a}$. 
        Next, if $\mathbf{a}^{(k)}$ satisfies $a_1^{(k)}=\min \mathbf{a}$, then no $i \in [n]$ can satisfy $a_1^{(k)}>a_i^{(k)}$, so $S^{(k)}=\emptyset$. 
        Lastly, if $S^{(k)}=\emptyset$, then it is clear $\argmin \mathbf{a} \notin S^{(k)}$, proving the equivalence.   
    \end{proof}
    
    Combining Claims \ref{claim: indexnestsd} and \ref{claim: indexemptyequiv}, the algorithm must terminate because the index sets are nested and strictly decreasing, so there exists $L$ for which $S^{(L)}=\emptyset$, which is equivalent to $\mathbf{a}^{(L)}$ satisfying $a_1^{(L)}=\min\mathbf{a}$, or property \ref{lemma: swapalgo i}. It now suffices to show that $\mathbf{a}^{(L)}$ satisfies properties \ref{lemma: swapalgo ii} and \ref{lemma: swapalgo iii}.
    
    Now, we note that by Claim \ref{claim: indexnestsd}, if $i \notin S^{(k)}$, then $i \notin S^{(\ell)}$ for all $\ell>k$.
    In particular, as $a_i^{(k)}=a_i^{(k+1)}$ if and only if $i\neq 1,j_k$, it follows that if $i\notin S \cup \{ 1 \}$, then $a_i^{(k)}=a_i^{(k+1)}$ for all $k$, which shows $a_i^{(L)}=a_i$. 
    Moreover, if $i \notin S^{(k)} \cup \{ 1 \}$, then $a_i^{(k)}=a_i^{(k+1)}=\cdots=a_i^{(L)}$.
    
    Similarly, if $i \in S$, then there exists $k$ such that $i \in S^{(k)}$ but $i \notin S^{(k+1)}$. 
    Note that $i \in S^{(k)} \subsetneq S^{(k-1)} \subsetneq \cdots \subsetneq S^{(0)}$. 
    Then $i\neq j_0,j_1,\dots,j_{k-1}$; if not, then $i=j_\ell$ for some $0\leq\ell\leq k-1$, and $j_\ell \notin S^{(\ell+1)}$, which implies $j_\ell \notin S^{(k)}$.
    Hence, $a_i^{(0)}=a_i^{(1)}=\cdots=a_i^{(k)}$. 
    At this stage of the algorithm, we have two possibilities: either $i=j_k=\max S^{(k)}$ or $i<j_k$ with $a_i^{(k)}\geq a_{j_k}^{(k)}=a_1^{(k+1)}$. 
    For the former, as $i=j_k \notin S^{(k+1)} \cup \{ 1 \}$, we have $a_1^{(k)}=a_{j_k}^{(k+1)}=\cdots=a_{j_k}^{(L)}$, where $a_1^{(k)}>a_{j_k}^{(k)}$; hence, $a_{j_k}^{(L)}>a_{j_k}^{(0)}$. 
    For the latter, $i\neq j_0,j_1,\dots,j_{L-1}$, as $i\notin S^{(k+1)}$ and hence $i\notin S^{(\ell)}$ for any $\ell>k$, so $a_i^{(0)}=a_i^{(1)}=\cdots=a_i^{(L)}$. 
    Thus, $a_i^{(L)}\geq a_i$ for all $i \in S$.
    Altogether, we obtain property \ref{lemma: swapalgo ii}.

    Lastly, assume that $i \in S$ satisfies $a_i^{(L)}>a_i$. 
    We showed above that if $i\neq j_k$ for any $k$, then $a_i^{(L)}=a_i$. 
    Thus, we must have $i=j_k \notin S^{(k+1)}$ for some $k$, so $a_1^{(k)}=a_{j_k}^{(k+1)}=a_{j_k}^{(L)}$. 
    By the definition of $j_k$ and $\mathbf{a}^{(k+1)}$, we have
    \[ a_1^{(k+1)}=a_{j_k}^{(k)}<a_1^{(k)}\leq a_{j_k+1}^{(k)},\dots,a_n^{(k)}=a_{j_k+1}^{(k+1)},\dots,a_n^{(k+1)}, \]
    which yields $j_k+1,\dots,n \notin S^{(k+1)}$, so $(a_{j_k}^{(k+1)},a_{j_k+1}^{(k+1)},\dots,a_n^{(k+1)})=(a_{j_k}^{(L)},a_{j_k+1}^{(L)},\dots,a_n^{(L)})$.
    Consequently, $a_{j_k}^{(L)}=a_1^{(k)}=\min(a_{j_k}^{(L)},a_{j_k+1}^{(L)},\dots,a_n^{(L)})$, which proves property \ref{lemma: swapalgo iii}.

    Therefore, we may take $\b=\mathbf{a}^{(L)}$, as desired.
\end{proof}
We can now leverage these lemmas to prove Theorem \ref{mainthm: degchar} successively (in order, we prove the closure property, image of the degree, embedding property, and monotonicity).
\begin{proof}[Proof of Theorem \ref{mainthm: degchar}\ref{mainthm: degchar closure}]
    For any $i \in [d]$, the main idea is to start with a permutation $\mathbf{p}=(p_1,p_2,\dots,p_n)$ of $(1^{n-d+1},\mathbf{w}_{\widehat{i}})$, construct a specific permutation $\x$ of $(1^{n-d},\mathbf{w}) \in \PAINV_n(\y)$, and use Lemmas \ref{lemma: replacement} and \ref{lemma: swapalgo} to progressively morph $\x$ into $\mathbf{p}$ while ensuring that each change preserves membership in $\PA_n(\y)$.
    
    Note first that if $p_n=1$, then $\mathbf{p}_{\vert_{n-1}}$ is a permutation of $(1^{n-d},\mathbf{w}_{\widehat{i}})$, so $(\mathbf{p}_{\vert_{n-1}},w_i) \in \PA_n(\y)$. 
    Applying Lemma \ref{lemma: replacement}, we obtain $(\mathbf{p}_{\vert_{n-1}},1)=\mathbf{p} \in \PA_n(\y)$.
    
    Now, we suppose $p_n=v \in \mathbf{w}_{\widehat{i}}$. 
    Assume first that $v\leq w_i$. Let $k=\max\{ j \in [n]:p_j=1 \}$. Consider the following permutation of $(1^{n-d},\mathbf{w})$:
    \[ \x=(p_1,\dots,p_{k-1},v,p_{k+1},\dots,p_{n-1},w_i) \in \PA_n(\y). \]
    Let $\mathbf{q}=(q_1,q_2,\dots,q_{n-k+1})\coloneqq (v,p_{k+1},\dots,p_{n-1},w_i)$ and $S=\{ j \in [n-k+1]:q_1>q_j \}$. 
    By Lemma \ref{lemma: swapalgo}, there is a permutation $\mathbf{q}'=(q_1',q_2',\dots,q_{n-k+1}')$ of the entries of $\mathbf{q}$ such that $q_1'=\min \mathbf{q}'$, $q_j'\geq q_j$ for all $j \in S$, $q_j'=q_j$ for all $j \notin S \cup \{ 1 \}$, and $q_j'=\min(q_j',q_{j+1}',\dots,q_{n-k+1}')$ for all $j \in S$ such that $q_j'>q_j$. 
    Then $\mathbf{x}'=(\mathbf{x}_{\vert_{k-1}},\mathbf{q}')=(\mathbf{p}_{\vert_{k-1}},\mathbf{q}') \in \PA_n(\y)$ is a permutation of $\x$. 
    
    From here, we aim to transform $\mathbf{q}'$ into $(p_k,p_{k+1}\dots,p_n)$ by way of Lemma \ref{lemma: replacement}. 
    Since $q_1'=\min \mathbf{q}'$, Lemma \ref{lemma: replacement} yields $(\mathbf{p}_{\vert_{k-1}},1,q_2',\dots,q_{n-k+1}') \in \PA_n(\y)$.
    For $2\leq j<n-k+1$, if $j \notin S$, then $q_j'=q_j=p_{j+k-1}$. 
    If $j \in S$, then we have $q_j'\geq q_j=p_{j+k-1}$. 
    Consider the sequence
    \[ (h_1,h_2,\dots,h_\ell)\coloneqq (j \in S:q_j'>q_j), \] 
    where $h_1<h_2<\cdots<h_\ell$. 
    For any $t \in [\ell]$, inductively assuming that we have already replaced $q_{h_1},\dots,q_{h_{t-1}}$, we have $q_{h_t}'>q_{h_t}$, and $q_{h_t}'=\min(q_{h_t}',q_{h_t+1}',\dots,q_{n-k+1}')$, so applying Lemma \ref{lemma: replacement}, $q_{h_t}'$ may be replaced with $q_{h_t}=p_{h_t+k-1}$ to obtain a parking assortment \footnote{the first such replacement is justified for the same reason.}. 
    Lastly, as $n-k+1 \notin S$, we have $q_{n-k+1}'=q_{n-k+1}=w_i$. 
    Consequently, $(\mathbf{p}_{\vert_{n-1}},w_i) \in \PA_n(\y)$. 
    Applying Lemma \ref{lemma: replacement} a final time, as $v\leq w_i$, we have $(\mathbf{p}_{\vert_{n-1}},v)=\mathbf{p} \in \PA_n(\y)$. Now, assume that $v>w_i$. We will instead consider the following permutation of $(1^{n-d},\mathbf{w})$:
    \[ \x=(p_1,\dots,p_{k-1},w_i,p_{k+1},\dots,p_{n-1},v) \in \PA_n(\y). \]
    Repeating the same argument as in the case of $v\leq w_i$ yields $\mathbf{p} \in \PA_n(\y)$.

    This covers all permutations of $(1^{n-d+1},w_{\widehat{i}})$, so therefore, as $i \in [d]$ was arbitrary, we have $(1^{n-d+1},w_{\widehat{i}}) \in \PAINV_n(\y)$ for all $i \in [d]$, as desired.
\end{proof}
\begin{proof}[Proof of Theorem \ref{mainthm: degchar}\ref{mainthm: degchar image}]
    We repeatedly apply Theorem \ref{mainthm: degchar}\ref{mainthm: degchar closure}.
\end{proof}
\begin{proof}[Proof of Theorem \ref{mainthm: degchar}\ref{mainthm: degchar embedding}]
    Assume that $\deg \x=d$. Let $\mathbf{p}=(p_1,p_2,\dots,p_{n+1})$ be any permutation of $(1,\x)$. 
    First, if $p_{n+1}=1$, then $\mathbf{p}_{\vert_n}$ is a permutation of $\x \in \PA_n(\y)$, so as $p_{n+1}\leq 1+\sum_{j=1}^{n}y_j$, we have $\mathbf{p} \in \PA_{n+1}(\y^+)$ by Lemma \ref{lemma: extendPA}. 
    Next, without loss of generality, $\x=(1^{n-d},\mathbf{w})$, where $\mathbf{w} \in \N_{>1}^d$. 
    If $p_{n+1}=w_i$ for some $i \in [d]$, then $\mathbf{p}_{\vert_n}$ is a permutation of $(1^{n-d+1},\mathbf{w}_{\widehat{i}}) \in \PAINV_n(\y)$ by Theorem \ref{mainthm: degchar}\ref{mainthm: degchar closure}. 
    Because $w_i\leq \max\mathbf{w}\leq 1+\sum_{j=1}^{n-1}y_j\leq 1+\sum_{j=1}^{n}y_j$ by Lemma \ref{lemma: nondecreasing}, we have $\mathbf{p} \in \PA_{n+1}(\y^+)$ by Lemma \ref{lemma: extendPA}. 
    This covers all permutations of $(1,\x)$; therefore, $(1,\x) \in \PAINV_{n+1}(\y^+)$.
\end{proof}
\begin{remark}
    This result immediately implies that the converse of Theorem \ref{mainthm: neccminchar} does not hold. 
    For instance, if $\y=(1,3,2,2)$, then one can check that $(1^3,4) \in \PAINV_4(\y)$, so $(1^{k+3},4) \in \PAINV_{k+4}(\y^+)$ for any extension $\y^+=(\y,y_5,\dots,y_{k+4})$ of $\y$.
\end{remark}
\begin{proof}[Proof of Theorem \ref{mainthm: degchar}\ref{mainthm: degchar prefix}]
    We first deal with the upper bound. 
    It is attainable, as if $\y=(c^n)$ and $\y^+=(c^{n+1})$, then $\chi(\y)=n-1$ and $\chi(\y^+)=n$. 
    To see that the upper bound is valid, suppose $\chi(\y^+)>\alpha+1$. 
    By Lemma \ref{lemma: removal}, $\chi(\y)>\alpha$. 
    But by assumption, $\chi(\y)=\alpha$, a contradiction. 
    Thus, $\chi(\y^+)\leq \alpha+1$.

    The lower bound is attainable, as if $\y$ is strictly increasing, and $\y^+=(\y,y_{n+1})$, where $y_{n+1}>y_n$, then $\chi(\y)=\chi(\y^+)=0$ (cf. Theorem 4.9 in ~\cite{icermpaper2023inv}). 
    By assumption, as $\chi(\y)=\alpha$, there exists $\x\in\PAINV_n(\y)$ such that $\deg \x=\alpha$. 
    Then by Theorem \ref{mainthm: degchar}\ref{mainthm: degchar embedding}, we have $(1,\x) \in \PAINV_{n+1}(\y^+)$, and $\deg((1,\x))=\alpha$, which means that $\chi(\y^+)\geq \alpha$.
\end{proof}
\begin{remark}
    This result generalizes Corollary 4.3 in \cite{icermpaper2023inv}, which is the case $\alpha=0$.
\end{remark}
\section{The Invariant Solution Set, Sum Avoidance, and Extremality}
\label{section: sdlengths}
In this section, we introduce the invariant solution set of $\y \in \N^n$ and relate it to $\PAINV_n(\y)$ and consider a family of strictly decreasing length vectors.
This in turn will lead us to proofs of Theorems \ref{mainthm: invsolset} and \ref{mainthm: PAINVNDbound}.
We define the invariant solution set as follows.
\begin{definition}
    Let $\y \in \N^n$. 
    The \emph{invariant solution set of $\y$} is given by
    \[ \W(\y)\coloneqq \{ w \in \N:(1^{n-1},w) \in \PAINV_n(\y) \}. \]
\end{definition}
To illustrate why this set is useful to study, we have the following result, which is a consequence of Theorem \ref{mainthm: degchar}\ref{mainthm: degchar closure}. 
\begin{lemma}
\label{lemma: invsolset}
    If $\x \in \PAINV_n(\y)$, then $\x \in \W(\y)^n$. 
\end{lemma}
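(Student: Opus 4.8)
The plan is to deduce this directly from the closure property in Theorem \ref{mainthm: degchar}\ref{mainthm: degchar closure}, together with the fact that $\PAINV_n(\y)$ is permutation invariant by its very definition. Fix $\x=(x_1,x_2,\dots,x_n) \in \PAINV_n(\y)$ and an arbitrary index $i \in [n]$. Unwinding the definition of $\W(\y)$, showing $\x \in \W(\y)^n$ amounts to proving that $x_i \in \W(\y)$, i.e. that $(1^{n-1},x_i) \in \PAINV_n(\y)$, for every $i$.

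First I would dispose of the trivial case $x_i=1$: here $(1^{n-1},x_i)=(1^n)$, which lies in $\PAINV_n(\y)$ for every $\y$, since all cars prefer spot $1$ and fill the lot from left to right, and $(1^n)$ is its own unique rearrangement. For the substantive case $x_i>1$, I would invoke permutation invariance to replace $\x$ by the canonical rearrangement $(1^{n-d},\mathbf{w})$ with $\mathbf{w} \in \N_{>1}^d$ and $d=\deg \x$, chosen so that the value $x_i$ occurs as one of the coordinates of $\mathbf{w}$, say $w_j$.

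The core of the argument is then to peel off the remaining $d-1$ non-$1$ entries one at a time. Each application of Theorem \ref{mainthm: degchar}\ref{mainthm: degchar closure} sends a vector of the form $(1^{n-d'},\mathbf{w}')$ with $\mathbf{w}' \in \N_{>1}^{d'}$ to $(1^{n-d'+1},\mathbf{w}'_{\widehat{\ell}}) \in \PAINV_n(\y)$ for any index $\ell$; the essential observation is that the output is again of the same canonical shape, with exactly one fewer non-$1$ coordinate, so the hypothesis of the closure property is preserved at every step. Deleting all coordinates of $\mathbf{w}$ except $w_j$ over $d-1$ such steps (always choosing $\ell$ so as to retain $w_j$) yields $(1^{n-1},w_j)=(1^{n-1},x_i) \in \PAINV_n(\y)$, whence $x_i \in \W(\y)$. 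Since $i \in [n]$ was arbitrary, this gives $\x \in \W(\y)^n$.

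I expect the only delicate point to be the bookkeeping: confirming that the intermediate vectors stay in the required form $(1^{n-d'},\mathbf{w}')$ so that Theorem \ref{mainthm: degchar}\ref{mainthm: degchar closure} may be applied repeatedly, and that the coordinate $w_j$ can always be kept. Both are immediate once one notes that deleting an entry from $\mathbf{w}'$ cannot introduce a new $1$, so all surviving coordinates still exceed $1$. No additional parking analysis is needed here, as the entire burden of the proof is carried by the closure property.
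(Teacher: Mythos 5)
Your proposal is correct and matches the paper's own proof, which likewise disposes of the case $x_i=1$ via $1 \in \W(\y)$ and then repeatedly applies the closure property of Theorem \ref{mainthm: degchar}\ref{mainthm: degchar closure} to delete the other non-$1$ entries until $(1^{n-1},x_i) \in \PAINV_n(\y)$ remains. Your version merely makes explicit the bookkeeping (passing to the canonical rearrangement $(1^{n-d},\mathbf{w})$ and checking the form is preserved at each step) that the paper leaves implicit.
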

\begin{proof}
    It is clear that $1\in \W(\y)$ for any $\y$, so consider any $i \in [n]$ such that $x_i\neq 1$. 
    Repeatedly applying Theorem \ref{mainthm: degchar}\ref{mainthm: degchar closure}, we have $(1^{n-1},x_i) \in \PAINV_n(\y)$, so $x_i \in \W(\y)$, as needed. 
\end{proof}
Given this, we now examine the elements of $\W(\y)$. 
Observe that Lemma \ref{lemma: nondecreasing} implies that if $w \in \W(\y)$, then $w\leq 1+\sum_{i=1}^{n-1}y_i$.
Interestingly, this bound can only be improved slightly if $\y$ is non-constant, which is Theorem \ref{mainthm: invsolset}\ref{mainthm: invsolset nonconstant}.
\begin{proof}[Proof of Theorem \ref{mainthm: invsolset}\ref{mainthm: invsolset nonconstant}]
    Again, we prove the contrapositive: if $w=1+\sum_{i=1}^{n-1}y_i$ satisfies $(1^{n-1},w) \in \PAINV_{n}(\y)$, then $\y$ is constant. 

    By assumption, $(1^{n-p-1},w,1^p)$ for all $p \in [n-1]_0$. 
    Fix $p$, and consider the parking experiment under $(1^{n-1},w)$.
    Then $[1,\sum_{i=1}^{n-p-1}y_i]$ is occupied before the $(n-p)$th car parks. 
    After this car parks, note that $\mathcal{U}_1\coloneqq [1+\sum_{i=1}^{n-p-1}y_i,w-1]$ 
    is unoccupied.
    Once the $(n-p)$th car parks, it fills $[1+\sum_{i=1}^{n-1}y_i,y_{n-p}+\sum_{i=1}^{n-1}y_i]$.
    Then $\mathcal{U}_2\coloneqq [1+y_{n-p}+\sum_{i=1}^{n-1}y_i,m]=[w+y_{n-p},m]$, where $|\mathcal{U}_2|=y_n-y_{n-p}\geq 0$, is unoccupied.
    Cars $n-p+1,n-p+2\dots,n-1$, all with preference $1$, will then park in $\mathcal{U}_1$, so that $\mathcal{U}_1'=[w-y_{n-p},w-1]$, where $|\mathcal{U}_1'|=y_{n-p}>0$, is still unoccupied. 
    Since $w-1<w+y_{n-p}$, the intervals $\mathcal{U}_1'$ and $\mathcal{U}_2$ are not contiguous, and the $n$th car must fill these intervals of lengths $y_{n-p}$ and $y_n-y_{n-p}$, which can only happen if $y_n=y_{n-p}$. 
    Therefore, because $p$ was arbitrary, $y_n=y_{n-p}$ for any $p \in [n-1]_0$, which implies that $\y$ is constant.
\end{proof}
\begin{remark}
    Theorem \ref{mainthm: invsolset}\ref{mainthm: invsolset nonconstant} also yields an alternate characterization of constant length vectors: $\y$ is constant if and only if $(1^{n-1},1+\sum_{i=1}^{n-1}y_i) \in \PAINV_n(\y)$.
    
    For an example of the equality case for $w$ when $\y$ is non-constant, consider $\y=(1,3,3,2)$. 
    One can check that $(1^3,7)=(1,y_1+y_2+y_3) \in \PAINV_4(\y)$.
    More generally, one also can show that if $\y=(1,3^{n-2},2) \in \N^n$, then $(1^{n-1},3n-5) \in \PAINV_n(\y)$.
\end{remark}
Moreover, the following result guarantees that any invariant solution can be written as a binary combination of the last $n-1$ entries of $\y$, which will allow us to prove Theorem \ref{mainthm: invsolset}\ref{mainthm: invsolset size bound}.
\begin{lemma}
\label{lemma: bincomb}
    Let $\y\in \N^n$ and $w \in \W(\y)$. 
    Then $w=1+\b^\top\y$ for some $\b \in \{ 0 \} \times \{ 0,1 \}^{n-1}$.
\end{lemma}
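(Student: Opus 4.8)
The plan is to extract the subset-sum structure from a single, carefully chosen permutation of $(1^{n-1},w)$. Since $w\in\W(\y)$ means $(1^{n-1},w)\in\PAINV_n(\y)$, every rearrangement of its entries lies in $\PA_n(\y)$; in particular the permutation $(w,1^{n-1})$ is a parking assortment for $\y$. The point of putting the unique non-$1$ preference in the first coordinate is that car $1$ (which has length $y_1$) is then guaranteed to park exactly at spot $w$, carving out a left gap $[1,w-1]$ that the remaining unit-preference cars are forced to tile exactly. The degenerate case $w=1$ is handled by taking $\b=\mathbf{0}$.

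First I would run the parking experiment for $(w,1^{n-1})\in\PA_n(\y)$. Because the street is empty when car $1$ arrives and parking succeeds, car $1$ occupies the block $[w,w+y_1-1]$ exactly at its preference. Next I would record the global fact that, since all $n$ cars park and $\sum_{i=1}^n y_i=m$, every spot of $[m]$ is occupied once the experiment ends; in particular the interval $[1,w-1]$ is completely filled.

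The key structural step is to identify which cars fill $[1,w-1]$. Car $1$'s block $[w,w+y_1-1]$ is disjoint from $[1,w-1]$, so $[1,w-1]$ is covered entirely by cars from $\{2,\dots,n\}$. Each parked car occupies a contiguous block equal to its own length, and any such block that meets $[1,w-1]$ cannot extend to spot $w$, which is already occupied by car $1$; hence every block meeting $[1,w-1]$ lies entirely inside $[1,w-1]$. Therefore $[1,w-1]$ is partitioned into full car-blocks indexed by some $T\subseteq\{2,\dots,n\}$, giving $w-1=\sum_{i\in T}y_i$. Setting $b_i=1$ for $i\in T$ and $b_i=0$ otherwise forces $b_1=0$, so $\b\in\{0\}\times\{0,1\}^{n-1}$ and $w=1+\b^\top\y$, as claimed. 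Note this also reproves the inclusion in Theorem \ref{mainthm: invsolset}\ref{mainthm: invsolset size bound} and the bound $|\W(\y)|\le 2^{n-1}$, since there are exactly $2^{n-1}$ admissible $\b$.

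The main obstacle is the tiling argument in the third step: one must justify rigorously that the cars touching $[1,w-1]$ are each contained in it and that together they cover it with no overlap and no gap. This rests on two facts about the parking process — that parked cars occupy contiguous blocks of exactly their lengths, and that spot $w$ is held by car $1$ — which together forbid any block from straddling the boundary between $w-1$ and $w$. Everything else (the choice of permutation, car $1$ parking at its preference, and the full occupancy of $[m]$) is routine once permutation invariance of $(1^{n-1},w)$ and the identity $\sum_i y_i=m$ are invoked.
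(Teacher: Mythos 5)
Your proof is correct and follows essentially the same route as the paper's: both run the parking experiment on the permutation $(w,1^{n-1})$, observe that car $1$ parks at $[w,w+y_1-1]$ leaving $[1,w-1]$ to be exactly tiled by a subset of cars $2,\dots,n$, and read off $\b \in \{0\}\times\{0,1\}^{n-1}$ from that subset. If anything, your single-pass argument, with the explicit justification that no car's block can straddle spot $w$, is a streamlined version of the paper's two-stage proof (which first produces some $\b \in \{0,1\}^n$ and only then forces $b_1=0$).
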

\begin{proof}
    The result is immediate for $w=1$, so assume $w>1$.
    We first show that there exists a $\b \in \{ 0,1 \}^n$ for which $w=1+\b^\top\y$.
    Assume not.
    Then $(w,1^{n-1}) \notin \PA_n(\y)$. 
    Indeed, $w-1$ is not a sum of entries of $\y$, meaning that no subset of the cars can precisely fill $[1,w-1]$, so $(1^{n-1},w) \notin \PAINV_n(\y)$. 
    Thus, such a $\b \in \{ 0,1 \}^n$ must exist.
    
    Now, if $w=1+\b^\top\y$, where $\b \in \{ 1 \} \times \{ 0,1 \}^{n-1}$, then we claim there exists $\mathbf{c} \in \{ 0 \} \times \{ 0,1 \}^{n-1}$ such that $\b^\top\y=\mathbf{c}^\top\y$. 
    Indeed, as $(w,1^{n-1}) \in \PA_n(\y)$, under its parking experiment, the first car leaves $[1,w-1]$ empty, and this must be completely occupied by the end of the experiment. 
    Thus, there is a subset of cars $2,3,\dots,n$ that can precisely fill $[1,w-1]$, which implies the existence of $\mathbf{c} \in \{ 0 \} \times \{ 0,1 \}^{n-1}$ such that $\mathbf{c}^\top\y=w-1=\b^\top\y$, as claimed.
    
    Therefore, for any $w \in \W(\y)$, we can always find $\b \in \{ 0 \} \times \{ 0,1 \}^{n-1}$ such that $w=1+\b^\top\y$.
\end{proof}
\begin{proof}[Proof of \ref{mainthm: invsolset}\ref{mainthm: invsolset size bound}]
    To prove the set inclusion, apply Lemma \ref{lemma: bincomb}.
    Then note that $|\W(\y)|\leq |\{ 1+\b^\top\y:\b \in \{ 0 \} \times \{ 0,1 \}^{n-1} \}|=2^{n-1}$.
\end{proof}
\begin{remark}
    Theorem \ref{mainthm: invsolset}\ref{mainthm: invsolset size bound} generalizes consequences of Theorems 5.1 and 5.4 in \cite{icermpaper2023inv}, which prove the result for $n=2,3$.
\end{remark}
For brevity, write $\mathcal{S}(\y)\coloneqq \{ \b^\top\y:\b \in \{ 0 \} \times \{ 0,1 \}^{n-1} \}$, so that $\W(\y)\subseteq 1+\mathcal{S}(\y)$.
Recall that $\W(\y)$ has the largest size for certain strictly decreasing (or nonincreasing) $\y$ when $n=3$ (Theorem 5.4 in ~\cite{icermpaper2023inv}).
Thus, a natural question to ask is whether this still holds for large $n$.
The answer is yes; however, we must note that not all strictly decreasing $\y$ yield an optimal $|\W(\y)|$; one must impose a sum avoidance condition to avoid multiple $\b \in \{ 0 \} \times \{ 0,1 \}^{n-1}$ yielding the same element of $\mathcal{S}(\y)$. 
One possible condition is for $\y$ to be ``superdecreasing," which is condition (\ref{eqn: supdec}) in Theorem \ref{mainthm: invsolset}\ref{mainthm: invsolset supdec}, which states that such $\y$ indeed yield an optimal $|\W(\y)|$.
We now prove Theorem \ref{mainthm: invsolset}\ref{mainthm: invsolset supdec}. 
\begin{proof}[Proof of Theorem \ref{mainthm: invsolset}\ref{mainthm: invsolset supdec}]
    We first prove that equality is achieved.
    The case for $n=1$ is immediate, so assume $n\geq 2$.
    To begin, we will first prove a convenient uniqueness property for binary combinations given $\y$ satisfying (\ref{eqn: supdec}).
    \begin{claim}
    \label{claim: bincombisuniq}
        If $s \in \mathcal{S}(\y)$, then there is a unique $\b \in \{ 0 \} \times \{ 0,1 \}^{n-1}$ such that $s=\b^\top\y$.
    \end{claim}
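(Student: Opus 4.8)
The plan is to prove uniqueness of binary subset-sum representations by exploiting the superdecreasing hypothesis (\ref{eqn: supdec}) through the standard ``smallest index of disagreement'' argument. First I would suppose, for contradiction, that some $s \in \mathcal{S}(\y)$ admits two \emph{distinct} representations $\b,\b' \in \{0\}\times\{0,1\}^{n-1}$ with $\b^\top\y = \b'^\top\y = s$. Since the first coordinate of every vector in $\{0\}\times\{0,1\}^{n-1}$ is forced to be $0$, I would set $j \coloneqq \min\{ i \in [n] : b_i \neq b_i' \}$ and observe that necessarily $j \geq 2$.

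Next, without loss of generality assume $b_j = 1$ and $b_j' = 0$. Because $b_i = b_i'$ for every $i < j$, all low-order terms cancel, so $\b^\top\y = \b'^\top\y$ reduces to $\sum_{i=j}^{n} b_i y_i = \sum_{i=j}^{n} b_i' y_i$. The key estimate is to bound the two sides against $y_j$: the left side is at least $y_j$, since $b_j = 1$ and the remaining summands are nonnegative, whereas the right side equals $\sum_{i=j+1}^{n} b_i' y_i \leq \sum_{i=j+1}^{n} y_i$, which is strictly less than $y_j$ by the superdecreasing inequality (\ref{eqn: supdec}) applied at index $j$ (valid since $2 \leq j \leq n-1$). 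This contradiction forces $\b = \b'$, yielding uniqueness. The boundary value $j = n$ is handled even more easily, since there the equation collapses to $y_n = 0$, which is impossible as $y_n \in \N$.

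This is in essence the classical uniqueness of representations over a superincreasing sequence, so I do not anticipate a genuine obstacle; the only points requiring care are bookkeeping ones. Specifically, I would confirm that the forced zero in the first coordinate makes $j \geq 2$, so that the invoked inequality (\ref{eqn: supdec}) is always at an admissible index, and that the $j = n$ case (where the dominated sum is empty) is treated separately. Once Claim \ref{claim: bincombisuniq} is established, it combines with Lemma \ref{lemma: bincomb} to show that $\b \mapsto 1 + \b^\top\y$ is injective on $\{0\}\times\{0,1\}^{n-1}$, which is the ingredient driving the equality $|\W(\y)| = 2^{n-1}$ in Theorem \ref{mainthm: invsolset}\ref{mainthm: invsolset supdec}.
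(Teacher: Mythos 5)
Your proof is correct and follows essentially the same argument as the paper: the paper likewise takes the minimal index $k$ of disagreement between the two binary vectors, assumes WLOG $b_k=1$, cancels the common low-order terms, and derives the contradiction $\sum_{j=k+1}^{n}c_jy_j\leq \sum_{j=k+1}^{n}y_j<y_k\leq y_k+\sum_{j=k+1}^{n}b_jy_j$ from (\ref{eqn: supdec}). Your explicit separate treatment of the boundary case $j=n$ (where the dominated sum is empty and the equation forces $y_n=0$) is a minor bookkeeping refinement that the paper handles implicitly, since the strict inequality holds trivially there.
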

    \begin{proof}
        Assume for the sake of contradiction that there exist $\b,\mathbf{c} \in \{ 0 \} \times \{ 0,1 \}^{n-1}$ such that $s=\b^\top\y=\mathbf{c}^\top\y$ and $\b\neq \mathbf{c}$. 
        Write $\b=(b_1,b_2,\dots,b_n)$ and $\mathbf{c}=(c_1,c_2,\dots,c_n)$, and let
        \[ k=\min\{ i \in [n]:b_i\neq c_i \}. \]
        Without loss of generality, assume that $b_k=1$, so that $c_k=0$ (the same argument holds otherwise). 
        Then by definition,
        \[ \b^\top\y=\left(\sum_{j=1}^{k-1}b_jy_j \right)+y_k+\left(\sum_{j=k+1}^{n}b_jy_j \right)=\left(\sum_{j=1}^{k-1}c_jy_j \right)+\left(\sum_{j=k+1}^{n}c_jy_j \right)=\mathbf{c}^\top\y, \]
        so $y_k+\sum_{j=k+1}^{n}b_jy_j=\sum_{j=k+1}^{n}c_jy_j$.
        But by (\ref{eqn: supdec}), we have
        \[ \sum_{j=k+1}^{n}c_jy_j\leq \sum_{j=k+1}^{n}y_j<y_k\leq y_k+\sum_{j=k+1}^{n}b_jy_j, \]
        so $y_k+\sum_{j=k+1}^{n}b_jy_j>\sum_{j=k+1}^{n}c_jy_j$, a contradiction.
    \end{proof}
    

    We automatically have $\W(\y) \subseteq 1+\mathcal{S}(\y)$ due to Lemma \ref{lemma: bincomb}, so we show the reverse inclusion. 
    Let $s=\b^\top\y \in \mathcal{S}(\y)$ and $\x=(1^{n-1},1+s)$. 
    Consider any permutation $\x'=(x'_1,x'_2,\dots,x'_n)$ of $\x$ such that $x'_1\neq 1+s$. 
    Under the parking experiment for $\x'$, the first car occupies $[1,y_1]$, and $|[1,y_1]|=y_1>\sum_{j=2}^{n}y_j\geq \sum_{j=1}^{n}b_jy_j=s$. 
    Moreover, there exists $i \in [n]$ such that $x'_i=1+s$, and $\x'_{\vert_{i-1}}=(1^{i-1}) \in \PA_{i-1}(\y_{\vert_{i-1}})$. 
    Thus, by Lemma \ref{lemma: extendPA}, $\x'_{\vert_i} \in \PA_{i}(\y_{\vert_i})$. The remaining cars all have preference 1, so parking succeeds and $\x' \in \PA_n(\y)$.
    Now, consider the remaining permutation $(1+s,1^{n-1})$. 
    Under its parking experiment, the first car leaves $[1,s]$ empty. 
    Claim \ref{claim: bincombisuniq} guarantees that a unique subset of the remaining $n-1$ cars, say cars $i_1,i_2,\dots,i_q$, can precisely fill $[1,s]$, so $\sum_{j=1}^{q} y_{i_j}=s$.
    \begin{claim}
    \label{claim: correctcarspark}
        Cars $i_1,i_2,\dots,i_q$ are the only ones that will park in $[1,s]$ under the parking experiment for $(1+s,1^{n-1})$.
    \end{claim}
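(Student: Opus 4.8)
The plan is to analyze the parking experiment for $(1+s,1^{n-1})$ dynamically and show that the greedy way in which cars fill $[1,s]$ forces exactly the subset identified by Claim \ref{claim: bincombisuniq}. First I would record the geometry: since the first car has preference $1+s$ and length $y_1$, it occupies $[1+s,s+y_1]$, splitting the remaining empty spots into the \emph{left region} $[1,s]$ and the \emph{right region} $[s+y_1+1,m]$. Cars $2,3,\dots,n$ all have preference $1$, so each first attempts the leftmost available spot. The key structural observation is that the occupied portion of the left region is always a prefix $[1,p]$: a car of length $y_j$ parks in the left region exactly when $y_j\leq s-p$, in which case it occupies $[p+1,p+y_j]$ and updates $p\mapsto p+y_j$; otherwise it cannot fit to the left of car $1$ (the spots $s+1,\dots,s+y_1$ being occupied) and is pushed into the right region, leaving $p$ unchanged. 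Since the right region never interferes with the left gap, these decisions are made independently, one car at a time, in index order.

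With this in hand, the heart of the argument is to match the parking dynamics to a greedy subset-sum selection afforded by (\ref{eqn: supdec}). I would maintain the invariant that, just before car $j$ is processed, $p$ equals the sum of the lengths of the cars already parked in the left region, so that the remaining left capacity $s-p$ equals the sum still needed to fill $[1,s]$. The decision rule ``park left iff $y_j\leq s-p$'' then coincides with the greedy rule ``include $y_j$ iff it does not exceed the remaining target.'' The superdecreasing hypothesis (\ref{eqn: supdec}) makes this greedy rule forced: if $y_j\leq s-p$ but car $j$ were to skip, then the cars of index exceeding $j$ have total length $\sum_{i>j}y_i<y_j\leq s-p$, too small to ever complete the left region, contradicting that the unique subset $\{i_1,\dots,i_q\}$ exactly fills $[1,s]$; conversely, if $y_j>s-p$, then parking car $j$ to the left would overflow $[1,s]$. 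Hence at every stage the parking choice agrees with membership in $\{i_1,\dots,i_q\}$, and an induction on the car index shows the cars ending up in the left region are precisely $i_1,\dots,i_q$.

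Finally I would conclude that since $\sum_{j=1}^{q}y_{i_j}=s$, the growing prefix reaches $p=s$ exactly when the last of $i_1,\dots,i_q$ has parked, so $[1,s]$ is filled entirely by these cars while every other car is forced into the right region; this establishes the claim. The main obstacle is the second step: one must argue cleanly that the dynamic ``fits/does-not-fit'' parking decision is equivalent to the static greedy subset-sum decision, and that (\ref{eqn: supdec}) forces the greedy choice at every stage. Once this equivalence is set up, the remaining bookkeeping via the prefix invariant is routine.
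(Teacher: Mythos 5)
Your proposal is correct and is essentially the paper's argument: both proceed by induction along the parking order with the invariant that the occupied part of $[1,s]$ is a prefix whose length is the sum of the designated cars processed so far, and both use (\ref{eqn: supdec}) to conclude that each non-designated car exceeds the remaining gap while each designated car fits exactly. Your ``greedy subset-sum'' framing with the contradiction $s-p\leq \sum_{i>j}y_i<y_j\leq s-p$ is just the contrapositive of the inequality chain the paper applies directly (there organized as a block induction over $i_1,\dots,i_q$ rather than car by car), so the two proofs coincide in substance.
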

    \begin{proof}
        Due to (\ref{eqn: supdec}), 
        \[ y_2>y_3>\dots>y_{i_1-1}>\sum_{j=i_1}^{n}y_j\geq\sum_{j=1}^{q}y_{i_j}=s, \]
        so cars $2,3,\dots,i_1-1$, all with preference $1$, will drive past $[1,s]$ and successively park immediately after.
        Thus, car $i_1$ must park in $[1,s]$, establishing the base case of our claim.
        Now, inductively, assume that cars $i_1,i_2,\dots,i_p$ are the only ones that parked in $[1,s]$ among the first $i_p$ cars, where $p \in [q-1]$. 
        Note that the unoccupied spots of $[1,s]$ are $[\sum_{j=1}^{p}y_{i_j},s]$, which has length $\sum_{j=p+1}^{q}y_{i_j}$. 
        Then by (\ref{eqn: supdec}), we have
        \[ y_{i_p+1}>y_{i_p+2}>\dots>y_{i_{p+1}-1}>\sum_{j=i_{p+1}}^{n}y_j\geq\sum_{j=p+1}^{q}y_{i_j}, \]
        so cars $i_p+1,i_p+2,\dots,i_{p+1}-1$ will drive past $[\sum_{j=1}^{p}y_{i_j},s]$ (and successively park immediately after). 
        Thus, car $i_{p+1}$ is the next car to park in $[1,s]$, completing the induction.
    \end{proof}

    From Claim \ref{claim: correctcarspark}, it follows that parking succeeds since each car $j$, where $j \in [n]\setminus\{ 1,i_1,i_2,\dots,i_q \}$, has preference $1$ and drives past $[1,s]$, so they will successively fill $[1+s+y_1,m]$. 
    Hence, we have $(1^{n-1},1+s) \in \PA_n(\y)$, so $\x \in \PAINV_n(\y)$ and $\W(\y)=1+\mathcal{S}(\y)$.

    To conclude, we will prove the following.
    
    \begin{claim}
        If $\y$ satisfies (\ref{eqn: supdec}), then $\chi(\y)=1$.
    \end{claim}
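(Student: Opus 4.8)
The plan is to establish the two inequalities $\chi(\y)\ge 1$ and $\chi(\y)\le 1$ separately. The lower bound is immediate from what we have just shown: since $\W(\y)=1+\mathcal{S}(\y)$ and $|\W(\y)|=2^{n-1}\ge 2$ for $n\ge 2$, there is some $w>1$ in $\W(\y)$, so $(1^{n-1},w)\in\PAINV_n(\y)$ has degree $1$ and $\chi(\y)\ge 1$. The entire difficulty is the upper bound $\chi(\y)\le 1$, i.e.\ the absence of any invariant parking assortment of degree at least $2$. By Theorem~\ref{mainthm: degchar}\ref{mainthm: degchar image}, if $\chi(\y)\ge 2$ then some $\x\in\PAINV_n(\y)$ has $\deg\x=2$; by Lemma~\ref{lemma: invsolset} and Lemma~\ref{lemma: bincomb} its two non-$1$ entries are $1+s_1$ and $1+s_2$ with $s_1,s_2\in\mathcal{S}(\y)\setminus\{0\}$. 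So it suffices to exhibit, for arbitrary such $s_1,s_2$, a single permutation of $(1^{n-2},1+s_1,1+s_2)$ that fails to park, contradicting invariance.

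The permutation I would use exploits both the unique representability of Claim~\ref{claim: bincombisuniq} and the dominance $y_1>\sum_{i\ge 2}y_i$ forced by (\ref{eqn: supdec}). Write $s=\min(s_1,s_2)$ and $s'=\max(s_1,s_2)$, let $s=\b^\top\y$ be the unique representation with support $T=\{i:b_i=1\}\subseteq\{2,\dots,n\}$, and fix any $t\in T$ (nonempty since $s>0$). Assign preference $1+s$ to car $1$, preference $1+s'$ to car $t$, and preference $1$ to every other car. Since the street is empty when car $1$ parks and $s\le\sum_{i\ge 2}y_i$, car $1$ occupies $[1+s,\,y_1+s]$, splitting the remaining spots into the gap $G=[1,s]$ and a tail $[y_1+s+1,m]$. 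Because $G$ is separated from the tail by car $1$, any successful outcome must tile $G$ exactly by a subset of cars $2,\dots,n$ whose lengths sum to $s$, and by Claim~\ref{claim: bincombisuniq} that subset is forced to be $T$.

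The crux --- and the step I expect to be the main obstacle to write carefully --- is showing that car $t$ can never occupy any spot of $G$, so that the forced filling by $T$ becomes impossible. Here I would use that car $t$ prefers spot $1+s'\ge 1+s$, which already lies at or beyond position $s+1$ (outside $G$, inside car $1$'s block or the tail), together with the fact that a car only ever moves forward from its preferred spot; hence car $t$ parks somewhere in the tail and leaves $G$ to be filled by cars in $\{2,\dots,n\}\setminus\{t\}$. Since the unique $s$-summing subset $T$ contains $t$, no admissible collection can tile $G$, so at least one spot of $G$ stays empty and parking fails. This contradicts $\x\in\PAINV_n(\y)$, forcing $\chi(\y)\le 1$ and hence $\chi(\y)=1$. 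Finally, I would note that the construction remains valid when $s_1=s_2$, taking both prescribed preferences equal to $1+s$.
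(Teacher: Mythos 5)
Your proposal is correct and takes essentially the same route as the paper: both derive a contradiction from a hypothetical degree-$2$ invariant assortment by running the permutation in which the smaller non-$1$ preference $1+s$ comes first (carving out the gap $[1,s]$) and the larger preference $1+s'$ is assigned to a car in the unique support of $s=\b^\top\y$ (the paper fixes $k=\min\{i:b_i=1\}$, you take an arbitrary $t$ in the support, an immaterial difference), then invoke Claim \ref{claim: bincombisuniq} to conclude the gap can never be filled. Your explicit lower-bound step $\chi(\y)\geq 1$ via $|\W(\y)|=2^{n-1}\geq 2$ matches the paper's closing observation that $\W(\y)\supsetneq\{1\}$.
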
   
    \begin{proof}
        We first prove that there cannot exist $1<w_1\leq w_2$ such that $(1^{n-2},w_1,w_2) \in \PAINV_n(\y)$. 
        Assume otherwise. 
        Then Lemma \ref{lemma: invsolset}, Lemma \ref{lemma: bincomb}, and our result above that $\W(\y)=1+\mathcal{S}(\y)$ guarantee $w_1-1=\b^\top\y$ for some $\b \in \{ 0 \} \times \{ 0,1 \}^{n-1}$.
        Now, let
        \[ k=\min\{ i \in [n]:b_i=1 \} \]
        (we note that $k>1$ since $b_1=0$).
        Consider the preferences $(w_1,1^{k-2},w_2,1^{n-k})$. 
        Under its parking experiment, the first car leaves $[1,\b^\top\y]$ empty, where $|[1,\b^\top\y]|=\b^\top\y$.
        By Claim \ref{claim: bincombisuniq}, $\b$ is unique, so to ensure $[1,\b^\top\y]$ is completely filled, car $k$ must park here.
        However, by construction, this is not the case since car $k$ has preference $w_2\geq w_1=1+\b^\top\y$. 
        Hence, $[1,\b^\top\y]$ will have unoccupied spaces by the end of the parking experiment, so parking fails, and it follows that if $\x \in \PAINV_n(\y)$, then $\deg \x\neq 2$.  
        Thus, $\chi(\y)<2$ by Theorem \ref{mainthm: degchar}\ref{mainthm: degchar image}, and so $\chi(\y)=1$ since $\W(\y)\supsetneq \{ 
        1 \}$.
    \end{proof}
    
    Therefore, $\PAINVND_n(\y)=\{ (1^{n-1},1+s):s \in \mathcal{S}(\y) \}$, as desired.

    For our count, note that the only nondecreasing invariant parking assortments of $\y$ are of the form $(1^{n-1},1+s)$, where $s=\b^\top\y$ for some $\b \in \{ 0 \} \times \{ 0,1 \}^{n-1}$. 
    Claim \ref{claim: bincombisuniq} ensures that $\b$ is unique, so each choice of $\b$ gives a distinct value of $s$. 
    Thus, by definition, $|\PAINVND_{n}(\y)|=|\mathcal{W}(\y)|=2^{n-1}$.
    
    Lastly, note that each of the $2^{n-1}-1$ nontrivial elements of $\PAINVND_{n}(\y)$ have $n$ distinct permutations of their entries due to their unique non-$1$ entry. 
    Therefore, $|\PAINV_n(\y)|=n(2^{n-1}-1)+1=2^{n-1}n-n+1$, as claimed.
\end{proof}
    
We are now ready to prove the rest of Theorem \ref{mainthm: invsolset}, which gives a necessary condition for equality to be achieved in Theorem \ref{mainthm: invsolset}\ref{mainthm: invsolset size bound}.
\begin{lemma}
\label{lemma: sufffirstentrymax}
    Let $\y=(y_1,y_2,\dots,y_n) \in \N^n$.
    If $1+\sum_{j=2}^{n}y_j \in \W(\y)$, then $y_1=\max\y$.
\end{lemma}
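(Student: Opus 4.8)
The plan is to argue by contradiction, exploiting permutation invariance directly rather than through any of the structural lemmas. Write $w\coloneqq 1+\sum_{j=2}^{n}y_j$, so that the hypothesis $w\in\W(\y)$ says precisely that $(1^{n-1},w)\in\PAINV_n(\y)$; by definition this forces \emph{every} rearrangement of the preference multiset $\{1^{n-1},w\}$ to lie in $\PA_n(\y)$. Suppose for contradiction that $y_1\neq\max\y$. Then there is an index $k\in\{2,3,\dots,n\}$ with $y_k>y_1$, and the idea is to exhibit the single rearrangement placing $w$ in position $k$, namely $\x^{(k)}\coloneqq(1^{k-1},w,1^{n-k})$, and show that parking fails under $\x^{(k)}$, contradicting $\x^{(k)}\in\PA_n(\y)$.

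To carry this out, first I would run the parking experiment for $\x^{(k)}$. Cars $1,2,\dots,k-1$ all have preference $1$, so by Lemma \ref{lemma: nondecreasing} they fill the initial block $[1,\sum_{j=1}^{k-1}y_j]$ exactly. The next step is the key arithmetic: since $y_k>y_1$,
\[
\sum_{j=1}^{k-1}y_j-\sum_{j=2}^{n}y_j=y_1-\sum_{j=k}^{n}y_j\leq y_1-y_k<0,
\]
so $\sum_{j=1}^{k-1}y_j<\sum_{j=2}^{n}y_j<w$. Hence when car $k$ attempts to park, the entire tail $[\,\sum_{j=1}^{k-1}y_j+1,\,m\,]$ is still empty, and in particular the only unoccupied spots at or beyond its preference $w$ are exactly $[w,m]$, of which there are $|[w,m]|=m-w+1=y_1$. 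Because car $k$ has length $y_k$ and searches only forward from $w$, it would need spots $[w,w+y_k-1]$; but $w+y_k-1=\sum_{j=2}^{n}y_j+y_k>y_1+\sum_{j=2}^{n}y_j=m$, so it requires a spot past the end of the lot. With only $y_1<y_k$ contiguous free spots available at or after $w$ and nothing beyond $m$, car $k$ cannot park, so $\x^{(k)}\notin\PA_n(\y)$.

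This contradicts the invariance of $(1^{n-1},w)$, so no such $k$ exists, forcing $y_1\geq y_j$ for all $j\in[n]$, i.e. $y_1=\max\y$. I do not expect a genuine obstacle here: once invariance is invoked to produce the offending permutation $\x^{(k)}$, the argument is purely a bookkeeping check that $[w,m]$ is free and too short. The one point demanding care is the parking rule itself—confirming that car $k$ only looks forward from $w$ and never backtracks, so that the $y_1$ free spots in $[w,m]$ combined with the absence of any spots beyond $m$ genuinely prevent it from parking.
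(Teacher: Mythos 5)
Your proposal is correct and takes essentially the same route as the paper: both refute invariance by running the permutation $(1^{k-1},w,1^{n-k})$ for some $k>1$ with $y_k>y_1$ (the paper takes $y_k=\max\y$) and observing that car $k$ must start at some spot $s\geq w$, so $s+y_k-1\geq y_k+\sum_{j=2}^{n}y_j>m$ and parking fails. Your additional bookkeeping showing the entire tail $[1+\sum_{j=1}^{k-1}y_j,m]$ is empty is harmless but unnecessary, since wherever car $k$ ends up, it parks forward of its preference $w$.
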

\begin{proof}
    If $y_1<\max\y$, let $k \in [n] \setminus \{ 1 \}$ satisfy $y_k=\max \y$, and consider the preferences $(1^{k-1},1+\sum_{j=2}^{n}y_j,1^{n-k})$. 
    Then under its parking experiment, the first $k-1$ cars park successively to fill $[1,\sum_{j=1}^{k-1}y_j]$.
    Car $k$ occupies $[s,s+y_k-1]$, where $s\geq 1+\sum_{j=2}^{n}y_j$, so $s+y_k-1\geq y_k+\sum_{j=2}^{n}y_j>m$ and hence parking fails. 
    Thus, $(1^{n-1},1+\sum_{j=2}^{n}y_j) \notin \PAINV_n(\y)$, so we have the contrapositive of the desired result.
\end{proof}
\begin{proof}[Proof of Theorem \ref{mainthm: invsolset}\ref{mainthm: invsolset neccequality}]

    If $|\W(\y)|=2^{n-1}$, then $|\W(\y)|=|\mathcal{S}(\y)|=2^{n-1}$. 
    Thus, there cannot exist $\b,\mathbf{c} \in \{ 0 \} \times \{ 0,1 \}^{n-1}$ such that $\b\neq \mathbf{c}$ and $\b^\top\y=\mathbf{c}^\top\y$; otherwise, $|\mathcal{S}(\y)|\leq 2^{n-1}-1$.
    Then $1+\b^\top\y \in \W(\y)$ for any $\b \in \{ 0 \} \times \{ 0,1 \}^{n-1}$.
    In particular, choosing $\b=(0,1^{n-1})$ gives $1+\sum_{j=2}^{n}y_j \in \W(\y)$.
    Lemma \ref{lemma: sufffirstentrymax} then ensures $y_1=\max\y$, so that $y_1\geq y_2$.

    We now show that the remaining inequality detailed in (\ref{eqn: almostsupdec}) holds via induction.
    For the base case, suppose $y_2\leq \sum_{i=3}^{n}y_i$.
    For $\b=(0^2,1^{n-2})$, we have $1+\sum_{i=3}^{n}y_i \in \W(\y)$.
    Hence, $(1+\sum_{i=3}^{n}y_i,1^{n-1}) \in \PA_n(\y)$, so under its parking experiment, the first car leaves $[1,\sum_{i=3}^{n}y_i]$ empty for other cars to fill.
    As $y_2\leq \sum_{i=3}^{n}y_i$, car 2 parks in $[1,\sum_{i=3}^{n}y_i]$.
    Thus, there exists $\mathbf{c}=(c_1,c_2,\dots,c_n) \in \{ 0 \} \times \{ 0,1 \}^{n-1}$ with $c_2=1$ such that $\mathbf{c}^\top\y=\sum_{i=3}^{n}y_i=\b^\top\y$, a contradiction since $\b\neq\mathbf{c}$.
    This shows that $y_2>\sum_{i=3}^{n}y_i$.

    Now, assume that for some $p \in [n] \setminus \{ 1 \}$, we have $y_j>\sum_{i=j+1}^{n}y_i$ for all $j \in [p-1] \setminus \{ 1 \}$. 
    We seek to prove that $y_p>\sum_{i=p+1}^{n}y_i$. 
    To do so, we will employ the same argument from above.
    Assume $y_p\leq \sum_{i=p+1}^{n}y_i$. 
    Choosing $\b=(0^p,1^{n-p})$
    yields $(1+\sum_{i=p+1}^{n}y_i,1^{n-1}) \in \PA_n(\y)$, so under its parking experiment, $[1,\sum_{i=p+1}^{n}y_i]$ is filled by a subset of cars $2,3,\dots,n$.
    By the inductive hypothesis, for any $j \in [p-1] \setminus \{ 1 \}$,
    \[ y_j>\sum_{i=j+1}^{n}y_i>\sum_{i=p+1}^{n}y_i. \]
    Since $y_p\leq \sum_{i=3}^{n}y_i$, car $p$ must be the first to park in $[1,\sum_{i=p+1}^{n}y_i]$.
    Thus, there exists $\mathbf{c}=(c_1,c_2,\dots,c_n) \in \{ 0 \} \times \{ 0,1 \}^{n-1}$ with $c_p=1$ such that $\mathbf{c}^\top\y=\sum_{i=p+1}^{n}y_i=\b^\top\y$, so we arrive at another contradiction.
    Therefore, $y_p>\sum_{i=p+1}^{n}y_i$, completing the induction.
\end{proof}
\begin{remark}
    The condition (\ref{eqn: almostsupdec}) is not sufficient for $|\W(\y)|=2^{n-1}$. 
    For $\y=(7,5,3,1)$, one can check that $\W(\y)=\{ 1,2,4,5,6,7,9 \}$, so $|\W(\y)|=7<2^3$.
\end{remark}
Given what was discussed above, we conclude by proving Theorem \ref{mainthm: PAINVNDbound}, which gives an upper bound on $|\PAINVND_n(\y)|$ independent of $m$.
\begin{proof}[Proof of Theorem \ref{mainthm: PAINVNDbound}]
    The set inclusion $\PAINVND_n(\y) \subseteq \{ 1 \}^{n-\chi(\y)}\times \W(\y)^{\chi(\y)}$ follows by Definition \ref{defn: degandchar} and Lemma \ref{lemma: invsolset}.
    To prove the bound, note that the nondecreasing elements of $\W(\y)^{\chi(\y)}$ are precisely multisets of cardinality $\chi(\y)$ taken from $\W(\y)$.
    Letting $|\W(\y)|=W$ and $\chi(\y)=C$, by stars and bars, the number of such multisets is $B(W,C)\coloneqq \binom{W+C-1}{C}$.
    By Pascal's identity, $B(W+1,C)=\binom{(W+1)+C-1}{C}=\binom{(W+1)+C-2}{C-1}+\binom{(W+1)+C-2}{C}=\binom{W+C-1}{C}+\binom{(W+1)+C-2}{C-1}\geq B(W,C)$, so $B(W,C)$ is nondecreasing with respect to $W$. 
    Similarly, $B(W,C+1)=\binom{W+(C+1)-1}{C+1}=\binom{W+C-1}{C}+\binom{W+C-1}{C+1}\geq B(W,C)$, so $B(W,C)$ is nondecreasing with respect to $C$.
    Therefore, since $W\leq 2^{n-1}$ by Theorem \ref{mainthm: invsolset}\ref{mainthm: invsolset size bound} and $C\leq n-1$, we have $|\PAINVND_n(\y)|\leq B(W,C)\leq B(2^{n-1},n-1)=\binom{2^{n-1}+n-2}{n-1}$, as claimed.
\end{proof}
\section{Open Problems}
\label{section: openproblems}
We now suggest various problems for future research.
\subsection{Length Vectors of Non-Maximal Characteristic}
In Section \ref{section: almostconstant}, we found a simple closed form for the set $\{ \y \in \N^n:\chi(\y)=n-1 \}$.
A natural follow-up to this is to consider finding closed forms for other characteristics.
\begin{openproblem}
\label{openproblem: chialphan}
    Give a direct characterization of the preimage $\chi^{-1}_n(\alpha)\coloneqq \{ \y \in \N^n:\chi(\y)=\alpha \}$ for any $\alpha \in [n-1]_0$.
\end{openproblem}
We note that Theorem \ref{mainthm: neccminchar} gives a containment of $\chi^{-1}_n(0)$ in a relatively simple set.
However, refining this containment is the main difficulty. 
For any $\alpha$, we suspect that an effective way to approach the problem is to look at repeated entries of $\y$ or any matching partial sums of the entries of $\y$ and study how these factors affect $\chi(\y)$. 
\subsection{Preserving the Characteristic}
As discussed, Theorem \ref{mainthm: degchar}\ref{mainthm: degchar prefix} gives an easy way to bound $\chi(\y)$ given that we know $\chi(\y_{\vert_k})$ for some $k<n$.
This can simplify the process for computing $\PAINV_n(\y)$ given $\PAINV_n(\y_{\vert_k})$, but there are still a great deal of possibilities to check especially if $k\ll n$.
In light of this case, we ask the following.
\begin{openproblem}
\label{openproblem: charpreserve}
    Let $\y \in \N^n$, where $\chi(\y)=\alpha$, and $\y^+=(\y,y_{n+1}) \in \N^{n+1}$. 
    What must be true about $\y$ and $\y^+$ to guarantee that $\chi(\y^+)=\alpha$?
\end{openproblem}
Note that if $\alpha=0$, then an answer to the above can help one construct $\chi(0,n)$ using a recursive-like technique.
Moreover, given a condition on $\y$ and $\y^+$, one tool that may be useful in showing that $\chi(\y^+)\neq n+1$ is Theorem \ref{mainthm: invsolset}\ref{mainthm: invsolset nonconstant}.
\subsection{Sharper Bounds}
Recall that in Theorem \ref{mainthm: PAINVNDbound}, we had an upper bound for the size of $\PAINVND_n(\y)$ depending only on $n$.
One might ask if this is the best such bound or if there is a similar bound for the size of $\PAINV_n(\y)$.
For our bound, we posit that the answer is no, and our computational experiments suggest that there is a familiar connection between the best upper bound for $|\PAINV_n(\y)|$ and the best upper bound for $|\PAINVND_n(\y)|$, which is described below.
\begin{openproblem}
    Let $\y \in \N^n$. 
    Do $|\PAINV_n(\y)|\leq (n+1)^{n-1}$ and $|\PAINVND_n(\y)|\leq C_n$ hold for any $n$?
\end{openproblem}
If this is true, then the upper bounds are automatically sharp since constant $\y$ are examples of the equality case.
A potential way to approach this problem is to examine and understand which elements of $1+\mathcal{S}(\y)$ also are in $\W(\y)$, as well as apply any results for Open Problems \ref{openproblem: chialphan} and \ref{openproblem: charpreserve}.

\section*{Acknowledgements}

I would like to thank Carlos Mart\'{i}nez and Pamela Harris for fruitful discussions.
\printbibliography
\end{document}